\documentclass[11pt]{article}
\usepackage{amsmath,amsfonts,amssymb,amsthm,enumerate,graphicx}
\usepackage{tikz,authblk}
\usepackage{subfig}
\usepackage{url}
\usepackage{float}
\usepackage{easyReview}
\usepackage[normalem]{ulem}

\textheight9.2in \textwidth6.1in \hoffset-0.6in \voffset-0.6in

\newtheorem{theorem}{{\bf Theorem}}[section]

\newtheorem{example}[theorem]{{\bf Example}}

\newtheorem{proposition}[theorem]{{\bf Proposition}}

\newtheorem{claim}{{\bf Claim}}

\newcommand{\ZZ}{ \ensuremath{\mathbb{Z}}}

\begin{document}

\title{A class of maps on the torus and their vertex orbits}


	\author {Marbarisha M. Kharkongor}
 	\author { Dipendu Maity}
 	\affil{Department of Science and Mathematics,
 		Indian Institute of Information Technology Guwahati, Bongora, Assam-781\,015, India.\linebreak
 		\{marbarisha.kharkongor, dipendu\}@iiitg.ac.in/\{marbarisha.kharkongor, dipendumaity\}@gmail.com.}
 
\date{\today}

\maketitle

\begin{abstract}
A tiling (edge-to-edge) of the plane is a family of tiles that cover the plane without gaps or overlaps. Vertex figure of a vertex in a tiling to be the union of all edges incident to that vertex. A tiling is $k$-vertex-homogeneous if any two vertices with congruent vertex figures are symmetric with each other and the vertices form precisely $k$ transitivity classes with respect to the group of all symmetries of the tiling. In this article, we discuss that if a map is the quotient of a plane's $k$-vertex-homogeneous lattice ($k \ge 4$) then what would be the sharp bounds of the number of vertex orbits.
\end{abstract}

\noindent {\small {\em MSC 2010\,:} 52C20, 52B70, 51M20, 57M60.

\noindent {\em Keywords:} Polyhedral map on torus; $k$-vertex-homogeneous tilings; Symmetric group.}

\section{Introduction}
Tilings of the plane by only regular polygons define the most symmetric tilings. Such tilings have been known since antiquity, but most of the classification results involving them have appeared only in the last two decades.  
Tiling is associated with it in a natural way to vertices, edges and tiles. 
The vertices, edges and tiles together of a tiling is called the elements of that tiling. Regularity of a tiling is defined by equivalence classes of elements of the tiling under the symmetry group of the tiling. In group theory, these equivalence classes are called the element's orbits under the symmetry group. 
We know that there are six equilateral triangles, four squares or three regular hexagons at a vertex, yielding the three regular (flag-transitivity) tilings. If the requirement of flag-transitivity is relaxed to one of vertex-transitivity, while the condition that the tiling is edge-to-edge is kept, there are eight additional tilings possible, known as Archimedean, uniform or semiregular tilings. Similarly, the tilings may be classified by the number of orbits of vertices, edges and tiles. If there are $k$ orbits of vertices, a tiling is known as $k$-uniform or $k$-isogonal; if there are $t$ orbits of tiles, as $t$-isohedral; if there are $e$ orbits of edges, as $e$-isotoxal. Note that $k$-uniform tilings with the same vertex figures can be further identified by their wallpaper group symmetry.
We say that two vertices of the tiling are symmetric with each other if they are in the same orbit. A tiling is called \emph{$k$-vertex-homogeneous} if any two vertices with congruent vertex figures are symmetric with each other and the vertices form precisely $k$ transitivity classes with respect to the group of all symmetries of the tiling. We know from \cite{chavey89, Otto1977, GS1977, GS1981} that there are exactly $135$ distinct vertex-homogeneous tilings on the plane. Since the plane is the universal cover of the torus and the vertex-homogeneous tilings have finite fundamental domain, it is interesting to ask:  if a map on the torus is the quotient of vertex-homogeneous tilings then what would be the bounds on the vertex orbits. That is, let $E$ be a vertex-homogeneous tiling on the plane and $X = E/H$ for some fixed element free group $H$. In this article, we will discuss and determine the sharp bounds of the vertex orbits of these quotient maps $X$ on the torus.

We know from \cite{Otto1977, GS1977, GS1981} that there are $11$ $1$-vertex-homogeneous tilings, $20$ $2$-vertex-homogeneous, $39$ $3$-vertex-homogeneous, $33$ $4$-vertex-homogeneous, $15$ $5$-vertex-homogeneous, $10$ $6$-vertex-homogeneous and $7$ $7$-vertex-homogeneous non isomorphic tilings on $\mathbb{R}^2$ and there does not exist $k$-vertex-homogeneous tilings on $\mathbb{R}^2$ if $k \ge 8$ (see \cite{Otto1977}, \cite{GS1977}, \cite{GS1981}).  

By a $map$, we mean a $2$-cell (called {\em faces}) decomposition of a compact connected surface. A surjective mapping $\eta \colon Y \to X$ from a map $Y$ to a map $X$ is called a $covering$ if it preserves adjacency and sends vertices, edges, faces of $Y$ to vertices, edges, faces of $X$ respectively. Let $X$ be a map on the torus and $\eta \colon Y \to X $ be a covering map. Let the vertices of $X$ form $m$ Aut$(X)$-orbits. If $Y$ is a $1$-uniform tiling on the plane then $m \le 6$ (see \cite{DM2017}, \cite{DM2018}). If $Y$ is a $2$-uniform tiling on the plane then $m \le 9$ (see \cite{DM2020}). If $Y$ is a $3$-uniform tiling on the plane then $m \le 15$ (see \cite{DM2020}). Here, we prove the following. 

\begin{theorem} \label{theo1} Let $X$ be a map on the torus and $\eta \colon Y \to X $ be a covering map. Let the vertices of $X$ form $m$ ${Aut}(X)$-orbits. Let $K_i$ for $1 \le i \le 65$ (in Example \ref{exam:plane1}) denote $k$-vertex-homogeneous tilings on the plane for some $k \in \{4, 5, 6, 7\}$.\\
\smallskip
{\rm (1)} If $Y = K_{1}, K_{2}, K_{4}, K_{8}, K_{9}, K_{14}, K_{20}, K_{24}, K_{26}, K_{31}, K_{32}, K_{33}$, then, $m \le 6$.\\
\smallskip
{\rm (2)} If $Y = K_{6}, K_{7}$, then, $m = 4$.\\
\smallskip
{\rm (3)} If $Y = K_{3}, K_{30}$, then, $m \le 5$.\\
\smallskip
{\rm (4)} If $Y = K_{5}, K_{21}, K_{25}, K_{29}, K_{34}, K_{40}, K_{43}, K_{47}$, then, $m \le 7$.\\
\smallskip
{\rm (5)} If $Y = K_{22}, K_{35}, K_{39}, K_{41}, K_{42}, K_{44}$, then, $m \le 8$.\\
\smallskip
{\rm (6)} If $Y = K_{18}, K_{52}, K_{53}, K_{56}$, then, $m \le 9$.\\
\smallskip
{\rm (7)} If $Y = K_{13}, K_{54}$, then, $m \le 10$.\\
\smallskip
{\rm (8)} If $Y = K_{37}, K_{60}$, then, $m \le 11$.\\
\smallskip
{\rm (9)} If $Y = K_{19}, K_{65}$, then, $m \le 12$.\\
\smallskip
{\rm (10)} If $Y = K_{15}, K_{16}, K_{17}, K_{48}$, then, $m \le 13$.\\
\smallskip
{\rm (11)} If $Y = K_{11}, K_{12}, K_{28}$, then, $m \le 15$.\\
\smallskip
{\rm (12)} If $Y = K_{10}, K_{38}$, then, $m \le 16$.\\
\smallskip
{\rm (13)} If $Y = K_{27}, K_{46}$, then, $m \le 18$.\\
\smallskip
{\rm (14)} If $Y = K_{45}$, then, $m \le 19$.\\
\smallskip
{\rm (15)} If $Y = K_{23}, K_{57}$, then, $m \le 21$.\\
\smallskip
{\rm (16)} If $Y = K_{36}, K_{63}$, then, $m \le 22$.\\
\smallskip
{\rm (17)} If $Y = K_{49}$, then, $m \le 24$.\\
\smallskip
{\rm (18)} If $Y = K_{51}$, then, $m \le 25$.\\
\smallskip
{\rm (19)} If $Y = K_{58}$, then, $m \le 26$.\\
\smallskip
{\rm (20)} If $Y = K_{55}$, then, $m \le 27$.\\
\smallskip
{\rm (21)} If $Y = K_{64}$, then, $m \le 28$.\\
\smallskip
{\rm (22)} If $Y = K_{50}$, then, $m \le 30$.\\
\smallskip
{\rm (23)} If $Y = K_{59}, K_{61}, K_{62}$, then, $m \le 33$.\\
\smallskip
{\rm (24)} The bounds in {\rm (1)} - {\rm (23)} are sharp.
\end{theorem}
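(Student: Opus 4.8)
\smallskip
\noindent\textbf{Proof idea.}
The strategy is to pin down exactly which symmetries of the covering tiling $Y$ are forced to survive in $\mathrm{Aut}(X)$, and then to count orbits. Write $G=\mathrm{Sym}(Y)$ for the wallpaper group of $Y$, $T\trianglelefteq G$ for its translation subgroup, and $\mathrm{pt}(G)=G/T$ for the point group; since $X$ is a torus, the deck group $\Gamma$ of $\eta$ is a finite-index subgroup of $T$ consisting of translations. As $T$ is abelian, $T\le N_G(\Gamma)$; and if $\mathrm{pt}(G)$ contains $-I$, i.e.\ $G$ has a half-turn $\rho$, then $\rho\,t_v\,\rho^{-1}=t_{-v}$ for every translation, so $\rho$ normalises \emph{every} sublattice of $T$, in particular $\Gamma$. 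Put $Z=\langle -I\rangle\cap\mathrm{pt}(G)$ and let $H$ be the preimage of $Z$ in $G$ (so $H=T$ when $G$ has no half-turn and $H=\langle T,\rho\rangle$ otherwise); then $H\trianglelefteq G$, $H\le N_G(\Gamma)$, and $H/\Gamma$ acts on $X$ by automorphisms of the map. Hence, taking $v_1,\dots,v_k$ to represent the $k$ vertex orbits of $Y$ and writing $G_{v_j}=\mathrm{Stab}_G(v_j)$,
\[
m\ \le\ \#\{\text{orbits of }H\text{ on the vertex set of }Y\}\ =\ \sum_{j=1}^{k}\bigl[\,\mathrm{pt}(G):Z\cdot\mathrm{pt}(G_{v_j})\,\bigr],
\]
and the $j$-th summand is $[\mathrm{pt}(G):\mathrm{pt}(G_{v_j})]$, except that it is halved exactly when $G$ contains a half-turn while the vertex figure of $v_j$ does not (as realised in $G$).

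The bulk of the argument is then to evaluate this sum for each of the $65$ tilings $K_i$ of Example~\ref{exam:plane1}. Tiling by tiling I would read off from the classification the wallpaper group $G_i$ (hence $\mathrm{pt}(G_i)$ and whether it contains $-I$) and, for each vertex orbit $v_j$ of $K_i$, the group $\mathrm{pt}(G_{i,v_j})$ of symmetries of the vertex figure of $v_j$ that $G_i$ actually realises --- this is read off from the position of $v_j$ relative to the rotation centres and mirror lines of $G_i$. Substituting into the displayed formula gives a number which one checks against the bound asserted for $K_i$; assembled as a table this yields the upper bounds in (1)--(23). The sharper statement $m=4$ in (2) is the degenerate case: there $\mathrm{pt}(G_i)$ is $\{I\}$ or $\{I,-I\}$, so $H=G_i$ and $G_i/\Gamma\le\mathrm{Aut}(X)$ already has exactly $k=4$ vertex orbits for \emph{every} admissible $\Gamma$, which together with the rigidity statement below ($\mathrm{Aut}(X)=G_i/\Gamma$) forces $m=4$.

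For the sharpness claim (24) I would use that these tilings are combinatorially rigid: since their faces are regular polygons, the cyclic sequence of face sizes at a vertex fixes the angles there, and the flat attachment of neighbouring faces is unique, so by developing outward from one flag every combinatorial automorphism of $K_i$ is induced by an isometry and $\mathrm{Aut}_{\mathrm{comb}}(K_i)=G_i$. Lifting a map automorphism of $X=K_i/\Gamma$ to the universal cover $K_i$ then shows $\mathrm{Aut}(X)=N_{G_i}(\Gamma)/\Gamma$ for every admissible $\Gamma$. It therefore suffices, for each $K_i$, to produce a sublattice $\Gamma_i\le T_i$ that is coarse enough that $X=K_i/\Gamma_i$ is a polyhedral map on the torus and ``skew enough'' that the only point-group elements of $G_i$ stabilising $\Gamma_i$ set-wise lie in $\{I,-I\}\cap\mathrm{pt}(G_i)$; concretely one may take $\Gamma_i=\langle\,a\,w_1,\ b\,w_1+c\,w_2\,\rangle$ for a basis $w_1,w_2$ of $T_i$, with $a,c$ large and unequal and $0<b<a$, $b\neq a/2$, and then verify --- a finite check, since $\mathrm{pt}(G_i)$ has at most $12$ elements, each a reflection or a rotation through $60^\circ,90^\circ,120^\circ$ or $180^\circ$ --- that no such element other than $\pm I$ preserves $\Gamma_i$. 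For such $\Gamma_i$ one gets $N_{G_i}(\Gamma_i)=H_i$, so $\mathrm{Aut}(X)=H_i/\Gamma_i$ and $m$ equals the number of $H_i$-orbits on the vertices of $K_i$, i.e.\ exactly the bound of (1)--(23); hence all the bounds are attained.

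The chief obstacle is organisational rather than conceptual: the descent argument and the rigidity input are short, but the genuine (and error-prone) labour is (i) extracting correctly from the classification the point groups $\mathrm{pt}(G_i)$, the half-turn membership, and the $65$ lists of vertex-orbit stabilisers, and (ii) certifying for each of the $23$ distinct bound-values an explicit lattice $\Gamma_i$ with $N_{G_i}(\Gamma_i)=H_i$. One real subtlety is that inside a single $G_i$-orbit the several $H_i$-orbits carry \emph{identical} vertex figures, so the sharpness argument truly needs the rigidity statement --- those orbits cannot be told apart by local combinatorial data, only through the global automorphism group.
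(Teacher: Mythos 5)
Your proposal follows essentially the same route as the paper: both pass to the subgroup $H$ generated by the translation lattice and the central half-turn, observe that it normalises every translation sublattice $\Gamma$ (the paper's Claim~1), descend $H/\Gamma$ into $\mathrm{Aut}(X)$ to bound $m$ by the number of $H$-orbits of vertices of $K_i$ (checked tiling by tiling), and obtain sharpness by exhibiting a ``skew'' sublattice (the paper takes $\langle 7\alpha_1,5\beta_1\rangle$) whose normaliser is exactly $H$. The only substantive additions on your side are the explicit point-group index formula for the orbit count and the rigidity justification of $\mathrm{Aut}(X)=N(\Gamma)/\Gamma$, which the paper asserts without proof.
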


\section{$k$-vertex-homogeneous tilings on the plane, $k \ge 4$}\label{fig:kuniform}
\begin{example} \label{exam:plane1}
{\rm $K_i$ $(1\le i \le 33)$ are $4$-vertex-homogeneous, $K_i$ $(34\le i \le 48)$ are $5$-vertex-homogeneous, $K_i$ $(49\le i \le 58)$ are $6$-vertex-homogeneous and $K_i$ $(59\le i \le 65)$ are $7$-vertex-homogeneous tilings on the plane. We need these tilings in Theorems \ref{theo1}.}
\end{example}
\begin{figure}[H]
    \centering
    \includegraphics[height=2.9cm, width= 2.9cm]{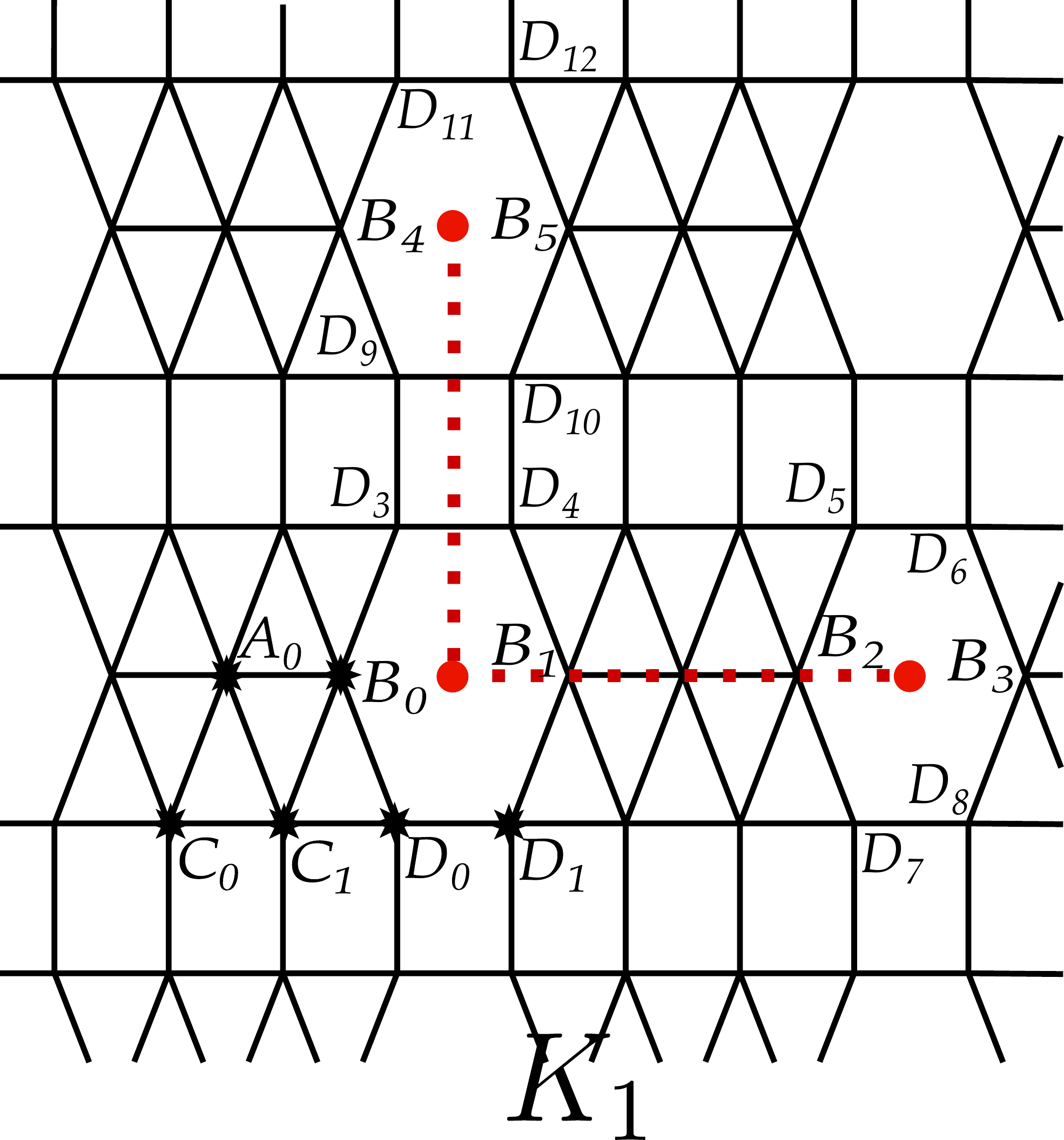}
    \includegraphics[height=2.9cm, width= 2.9cm]{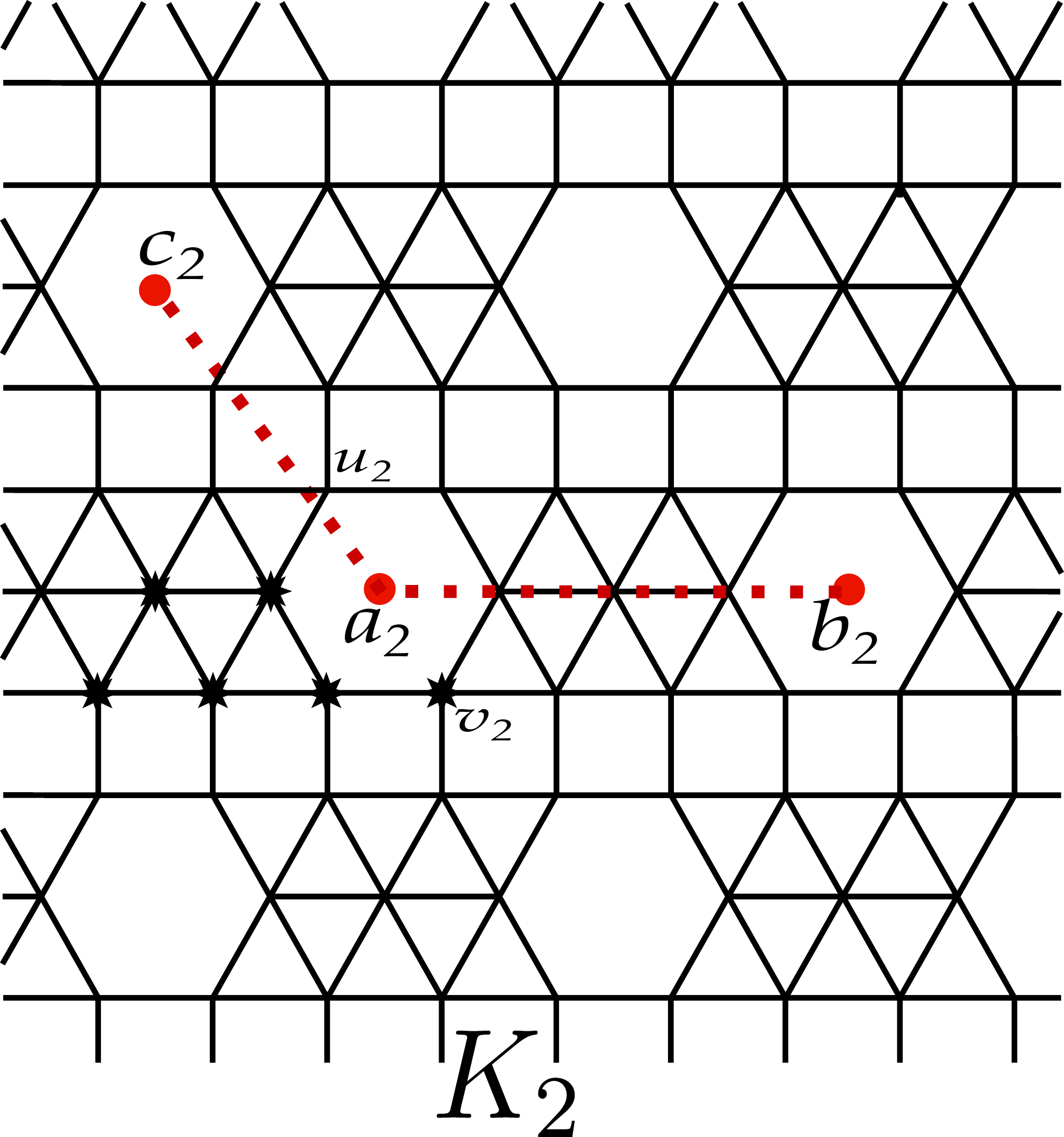}
    \includegraphics[height=2.9cm, width= 2.9cm]{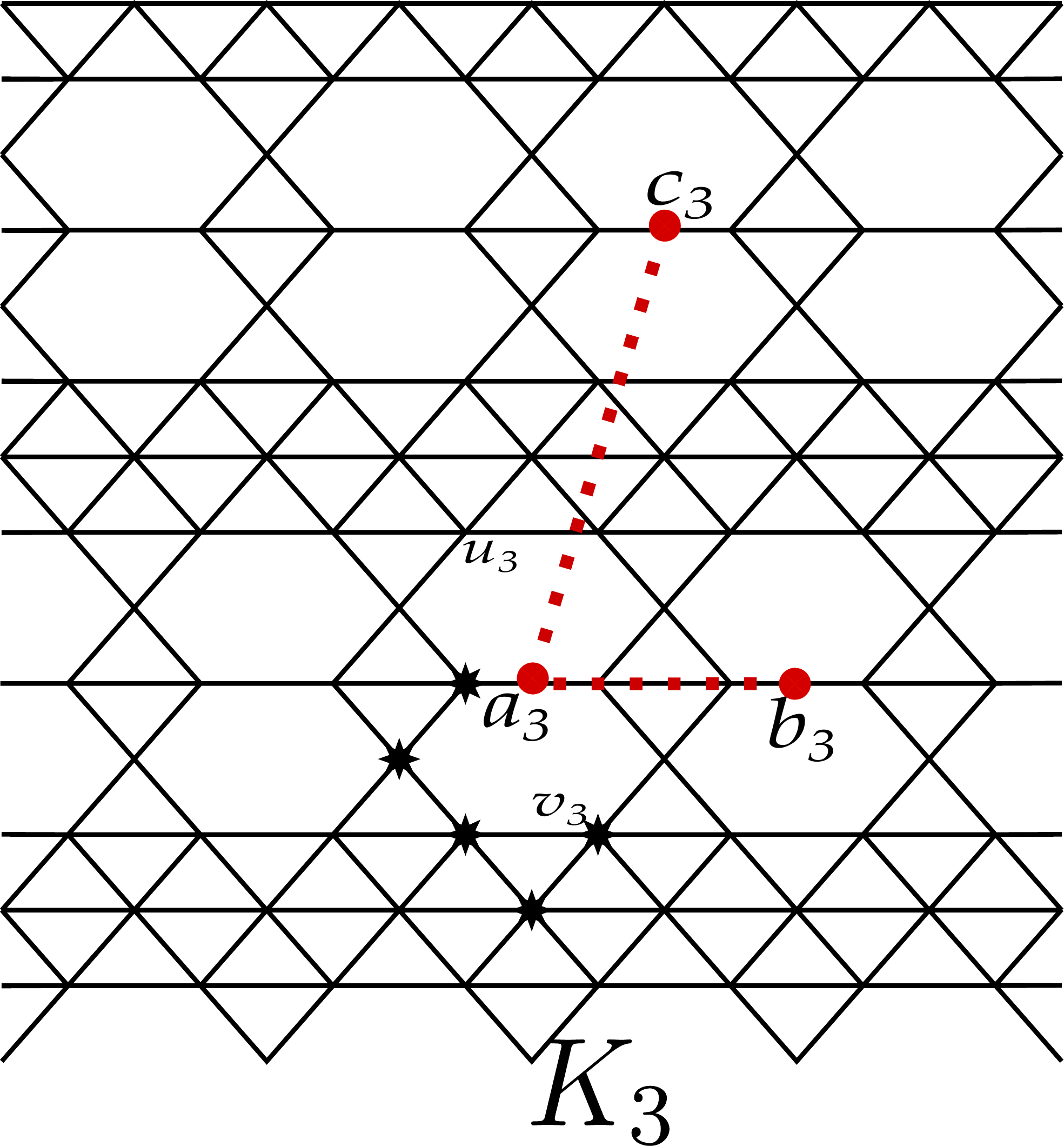}
    \includegraphics[height=2.9cm, width= 2.9cm]{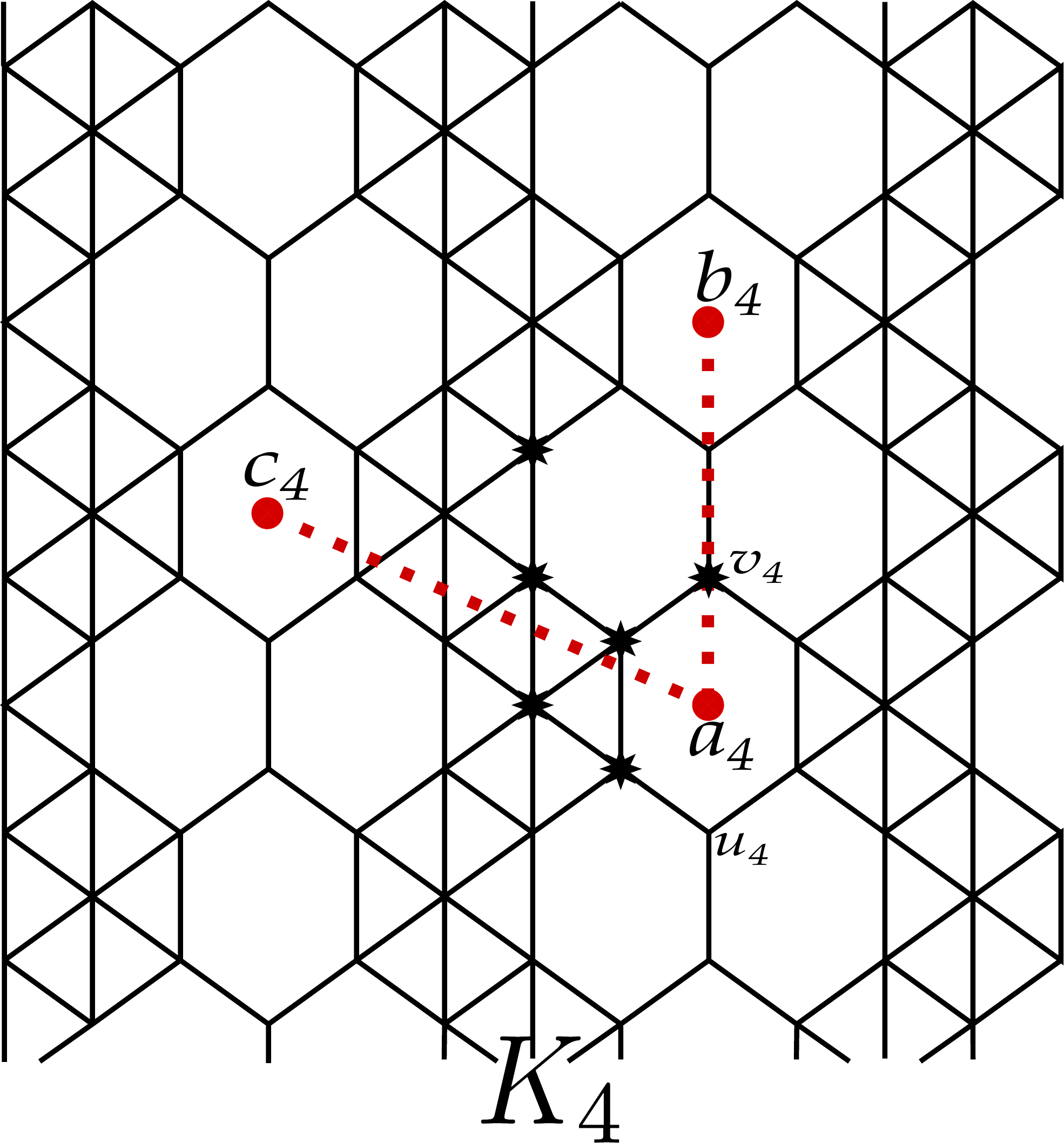}
    \includegraphics[height=2.9cm, width= 2.9cm]{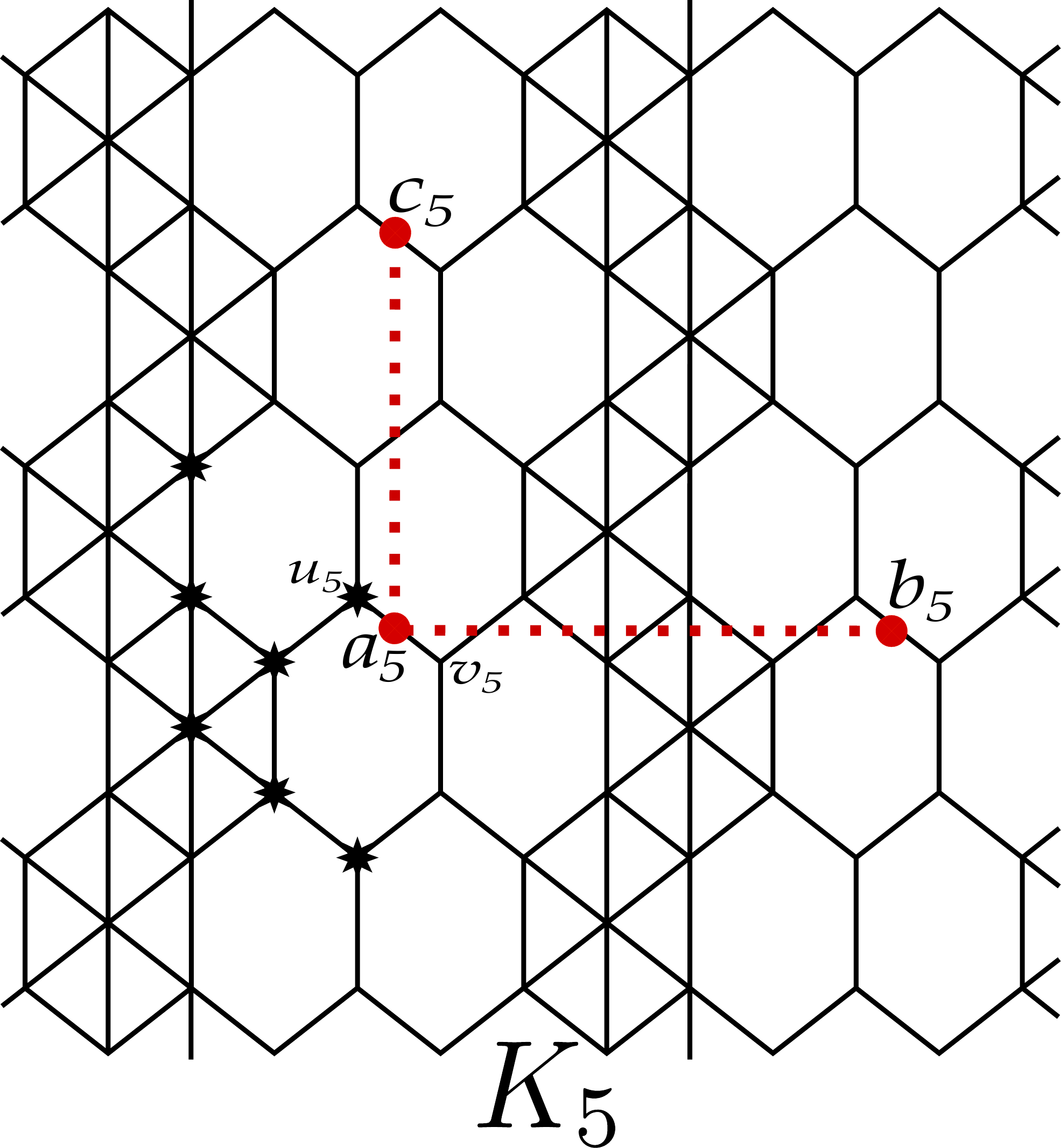}
    \end{figure}
    \vspace{-7mm}
\begin{figure}[H]
    \centering
    \includegraphics[height=2.9cm, width= 2.9cm]{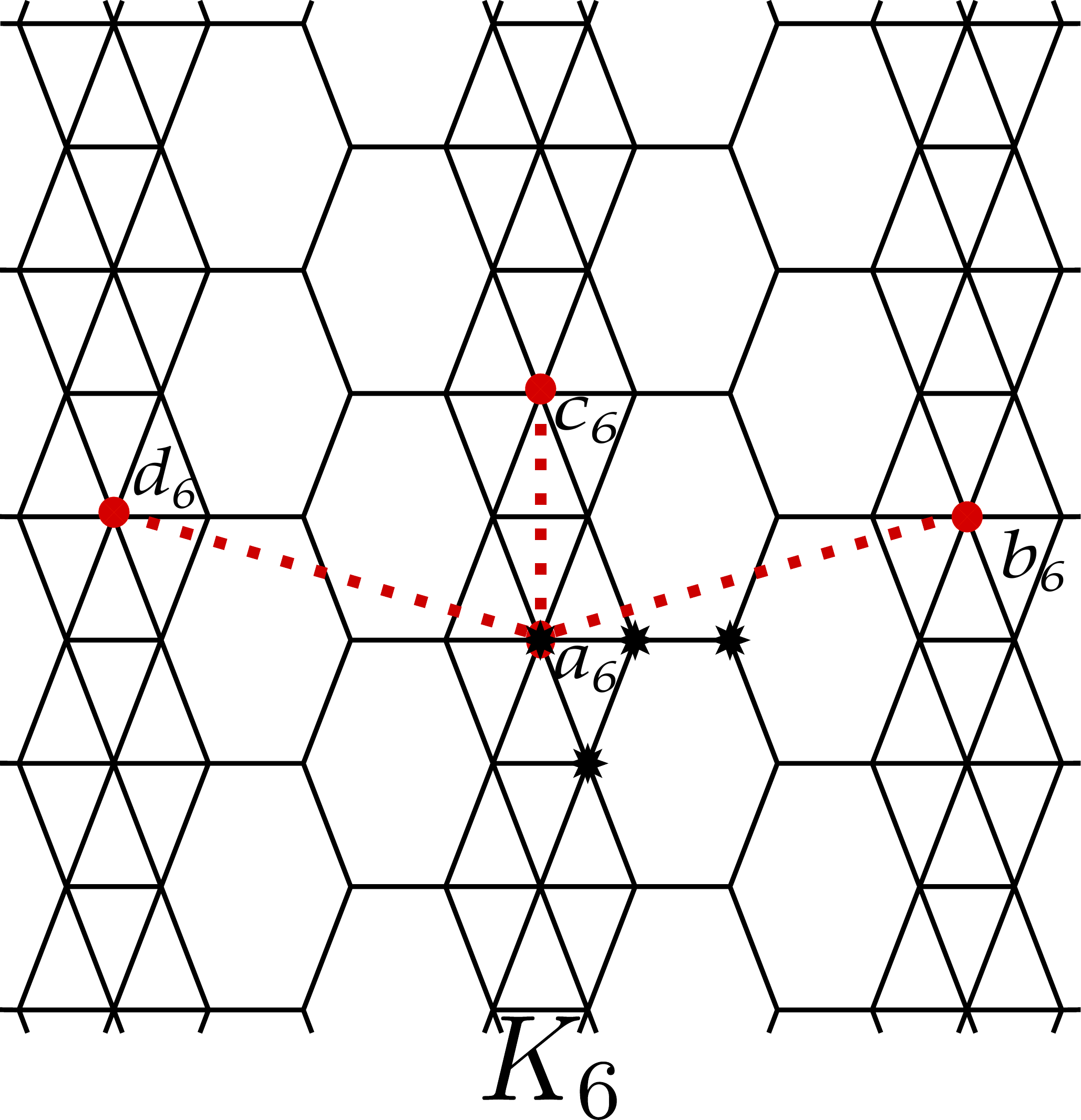}
    \includegraphics[height=2.9cm, width= 2.9cm]{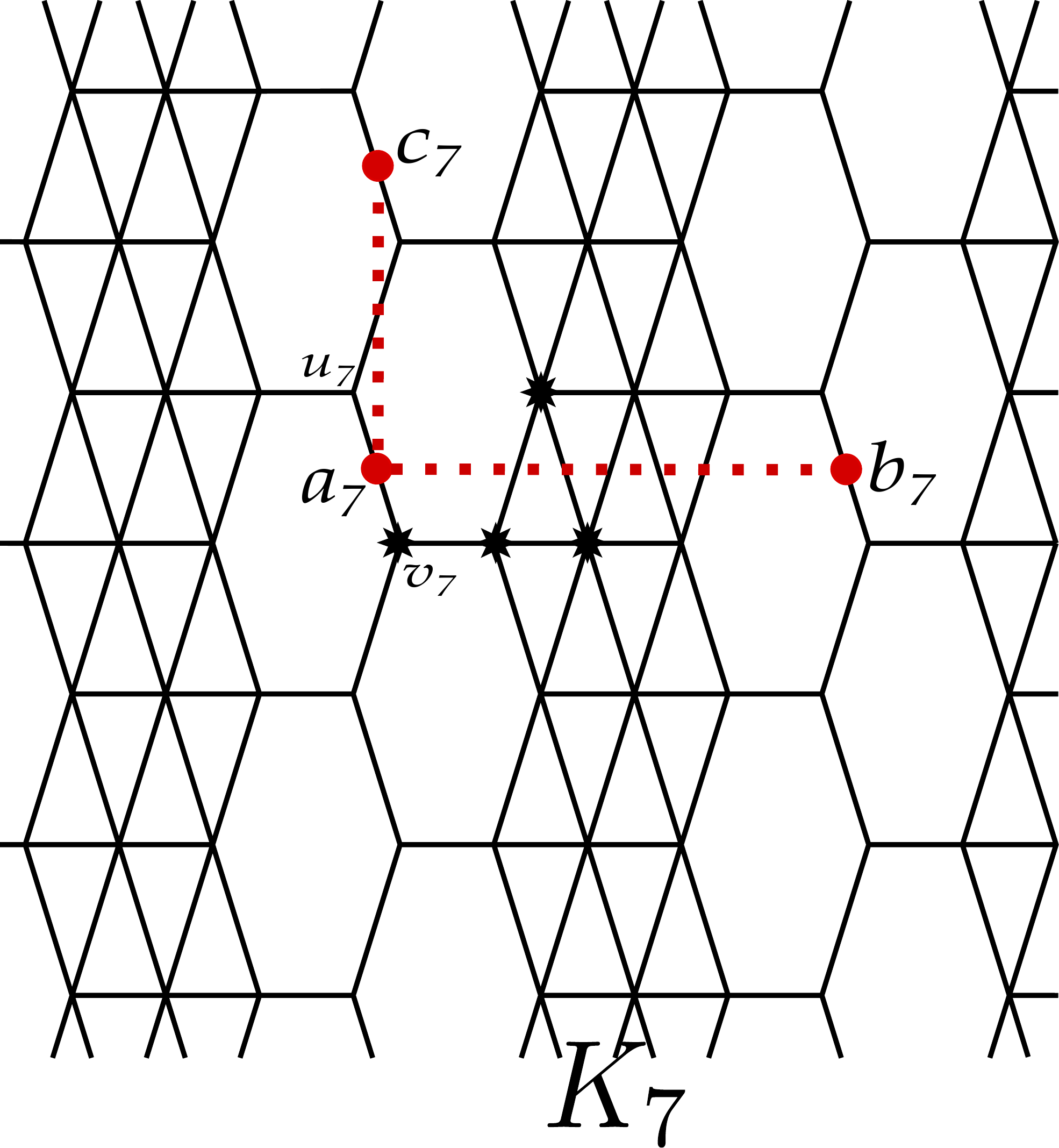}
    \includegraphics[height=2.9cm, width= 2.9cm]{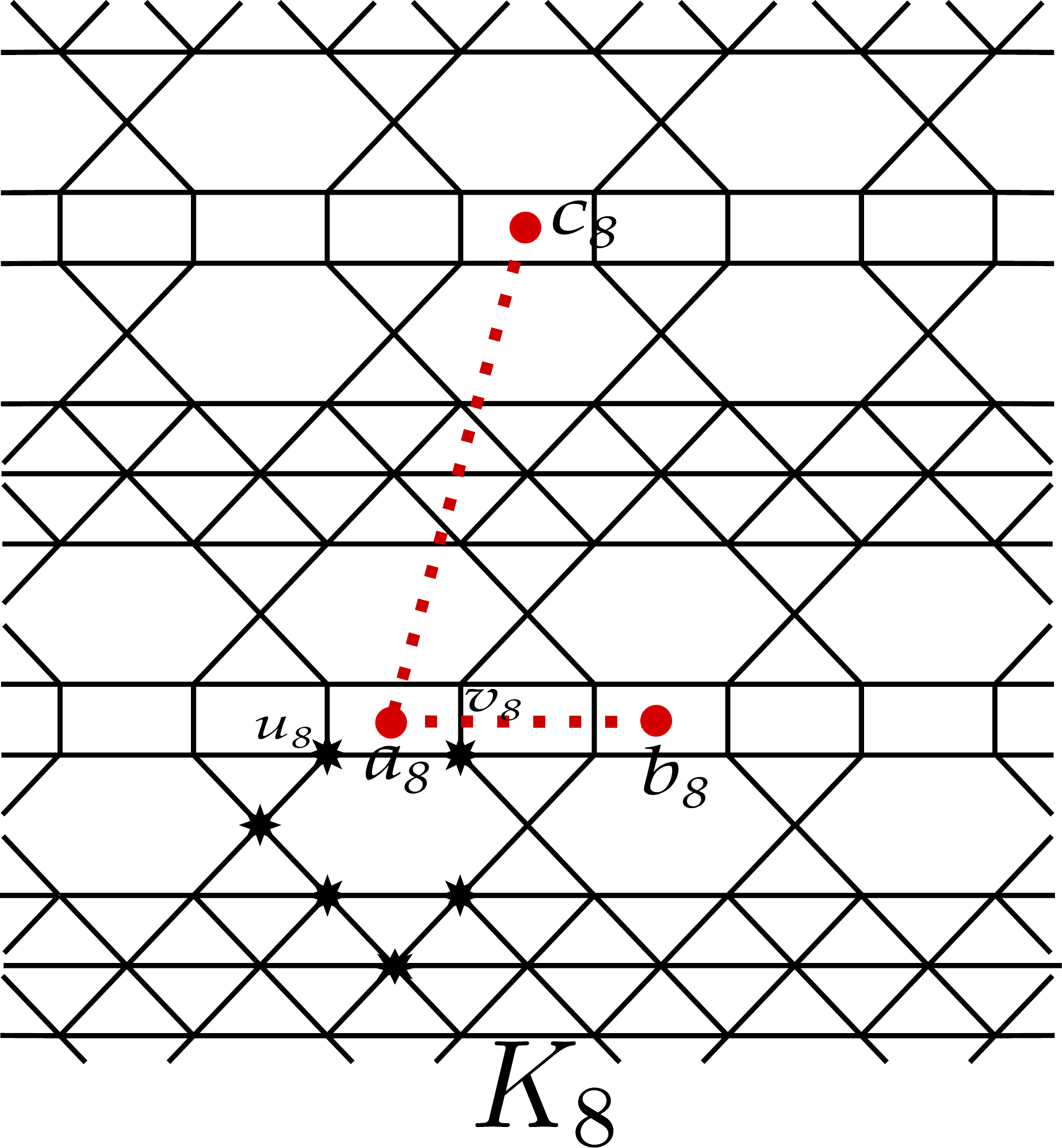}
    \includegraphics[height=2.9cm, width= 2.9cm]{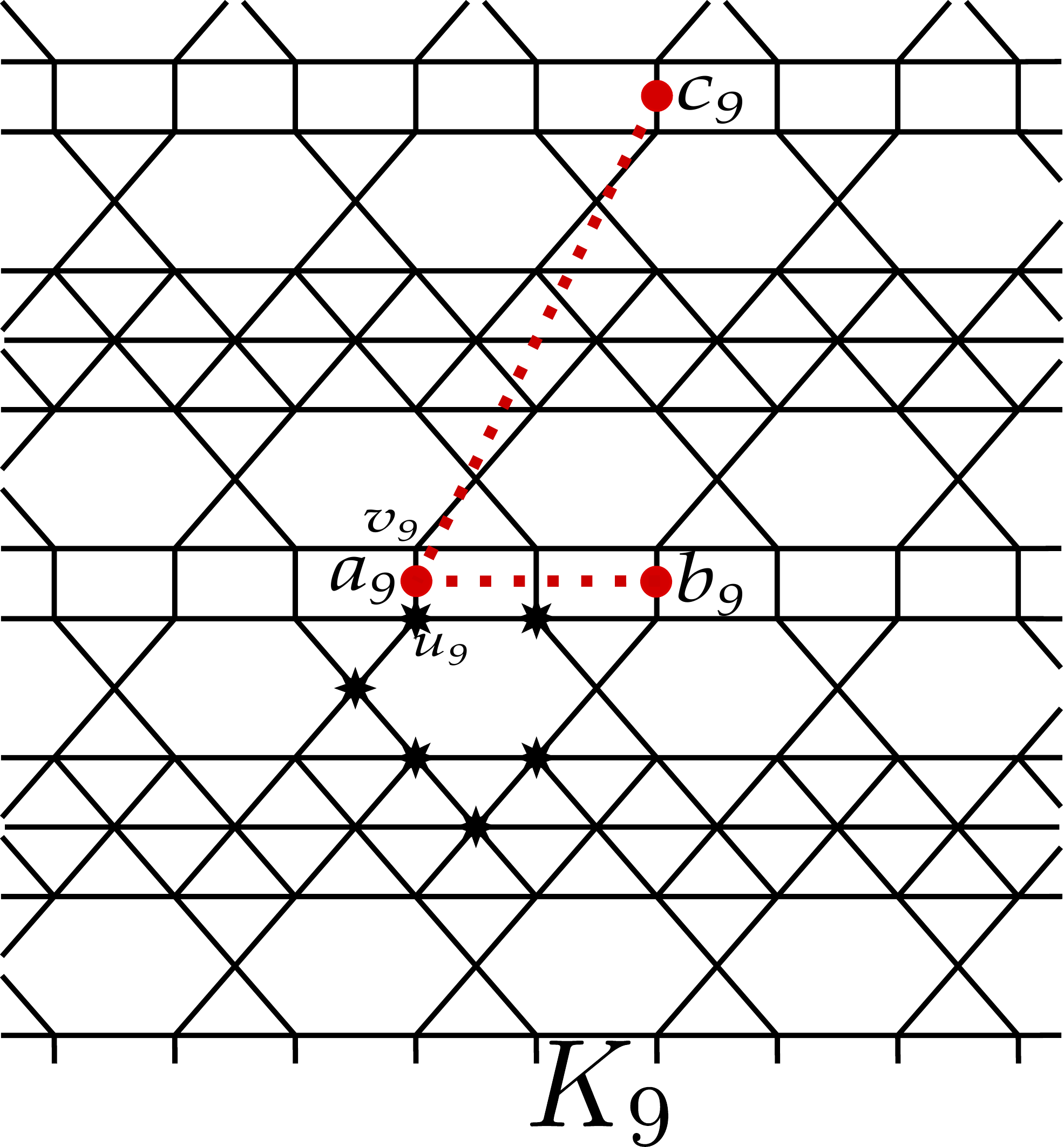}
    \includegraphics[height=2.9cm, width= 2.9cm]{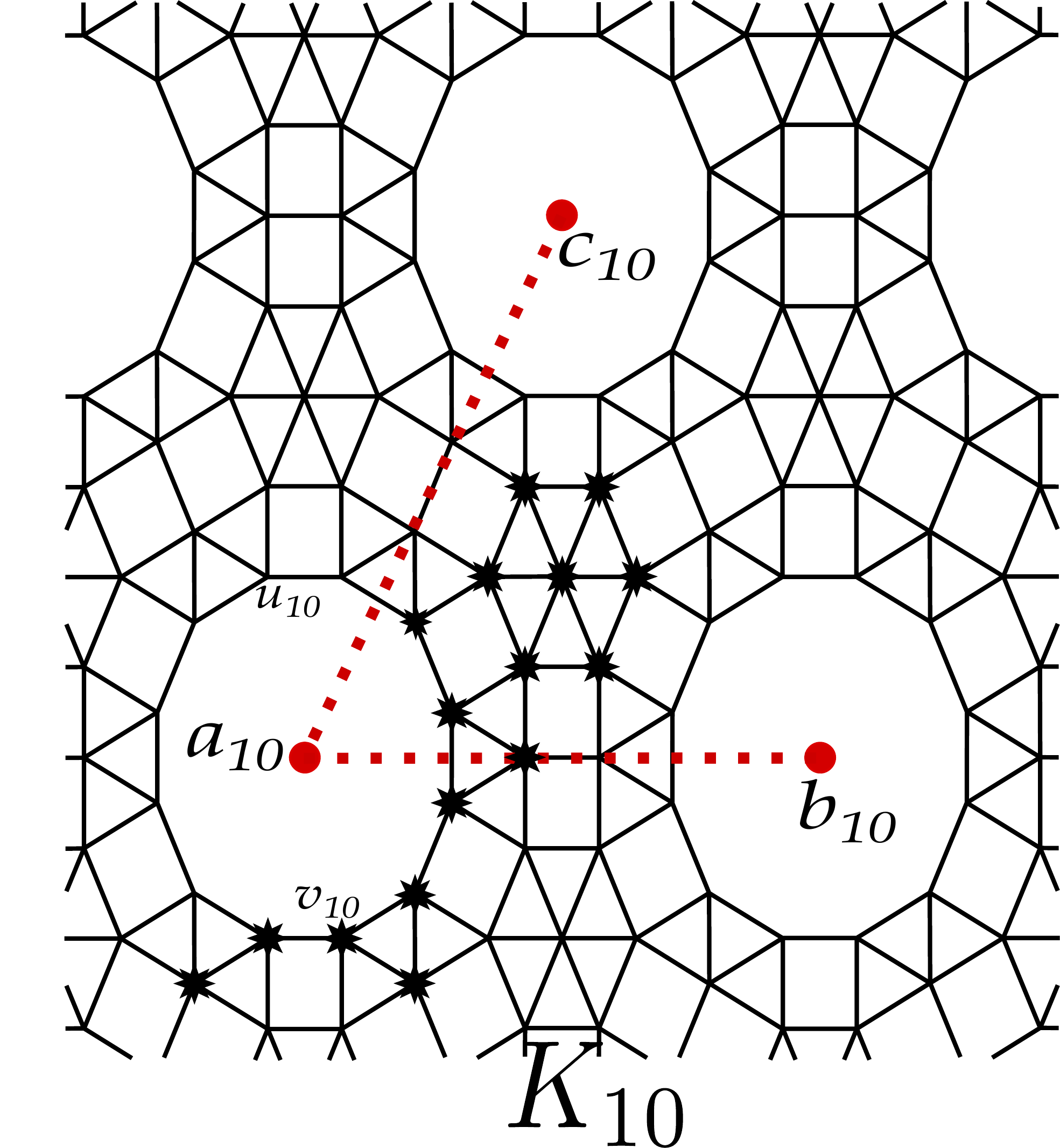}
     \vspace{-7mm}
\end{figure}
\begin{figure}[H]
    \centering
    \includegraphics[height=2.9cm, width= 2.9cm]{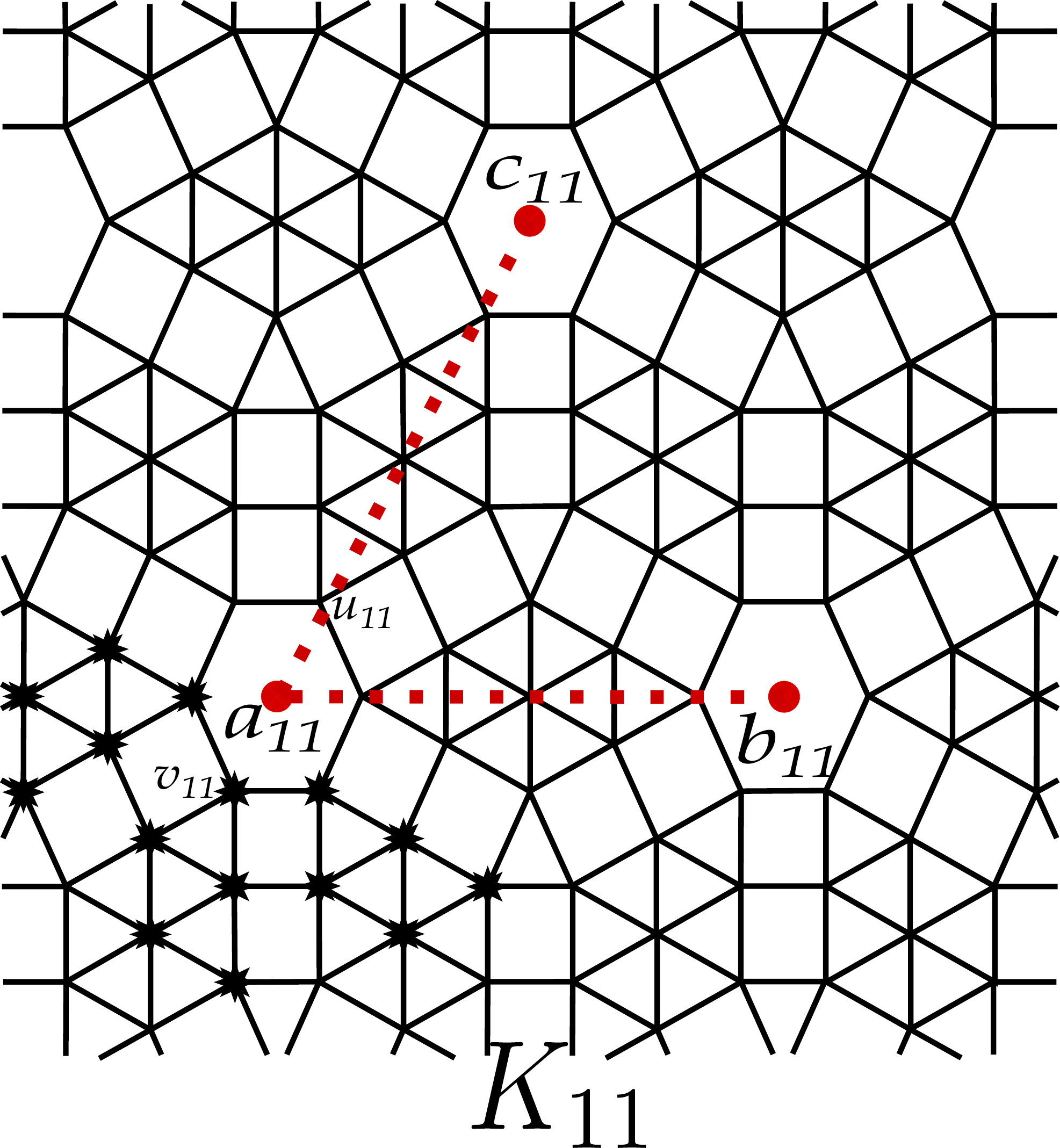}
    \includegraphics[height=2.9cm, width= 2.9cm]{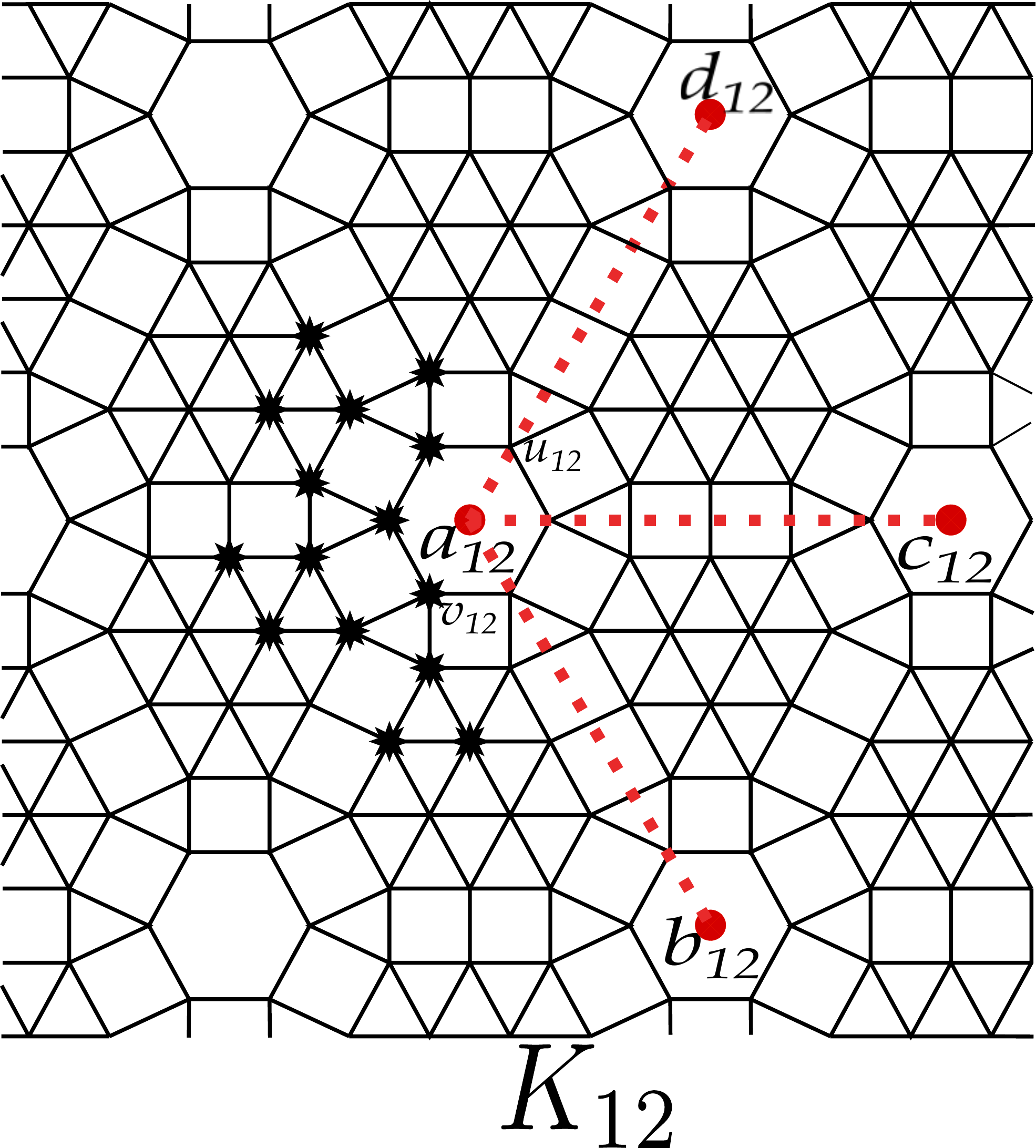}
    \includegraphics[height=2.9cm, width= 2.9cm]{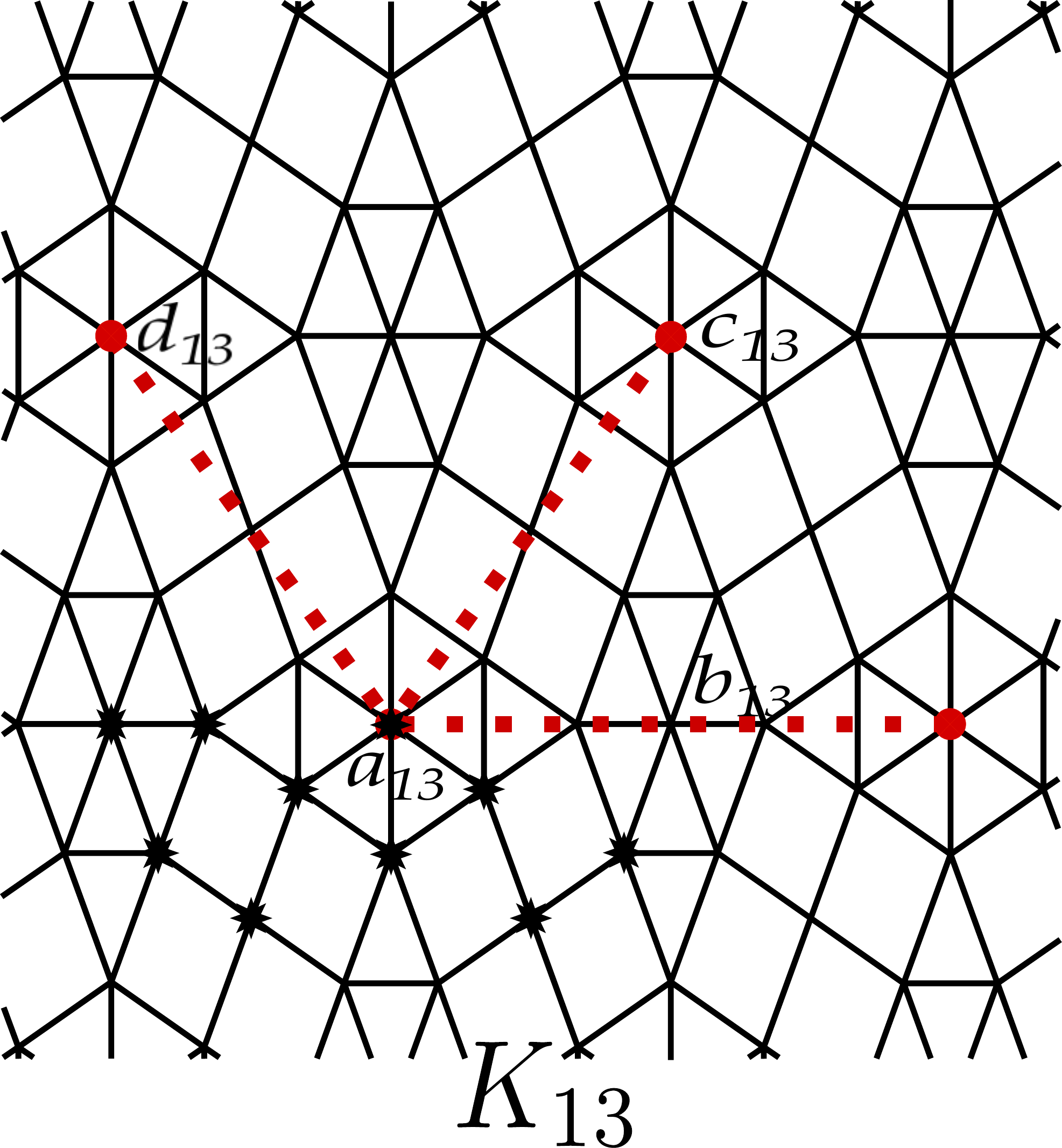}
    \includegraphics[height=2.9cm, width= 2.9cm]{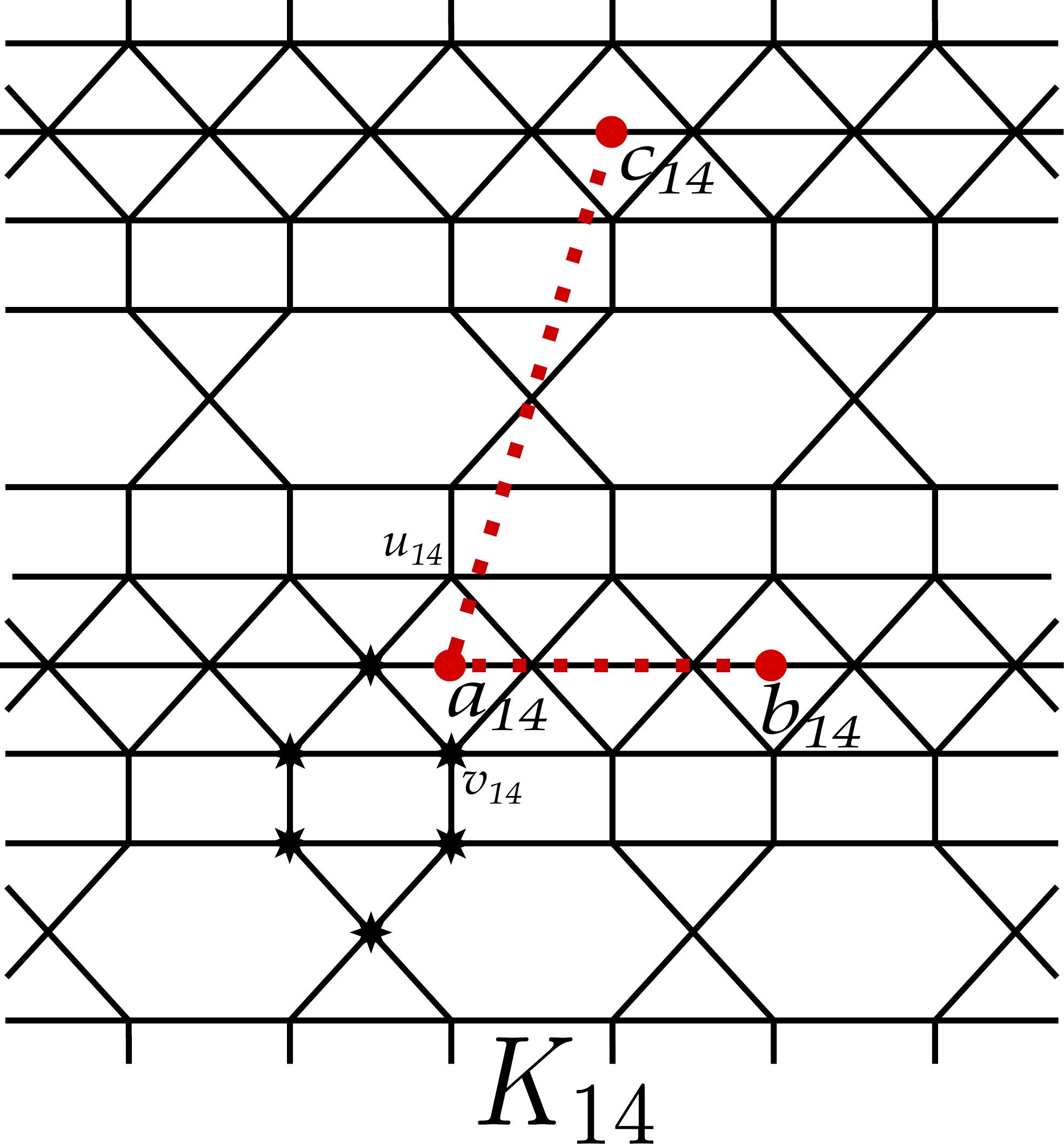}
    \includegraphics[height=2.9cm, width= 2.9cm]{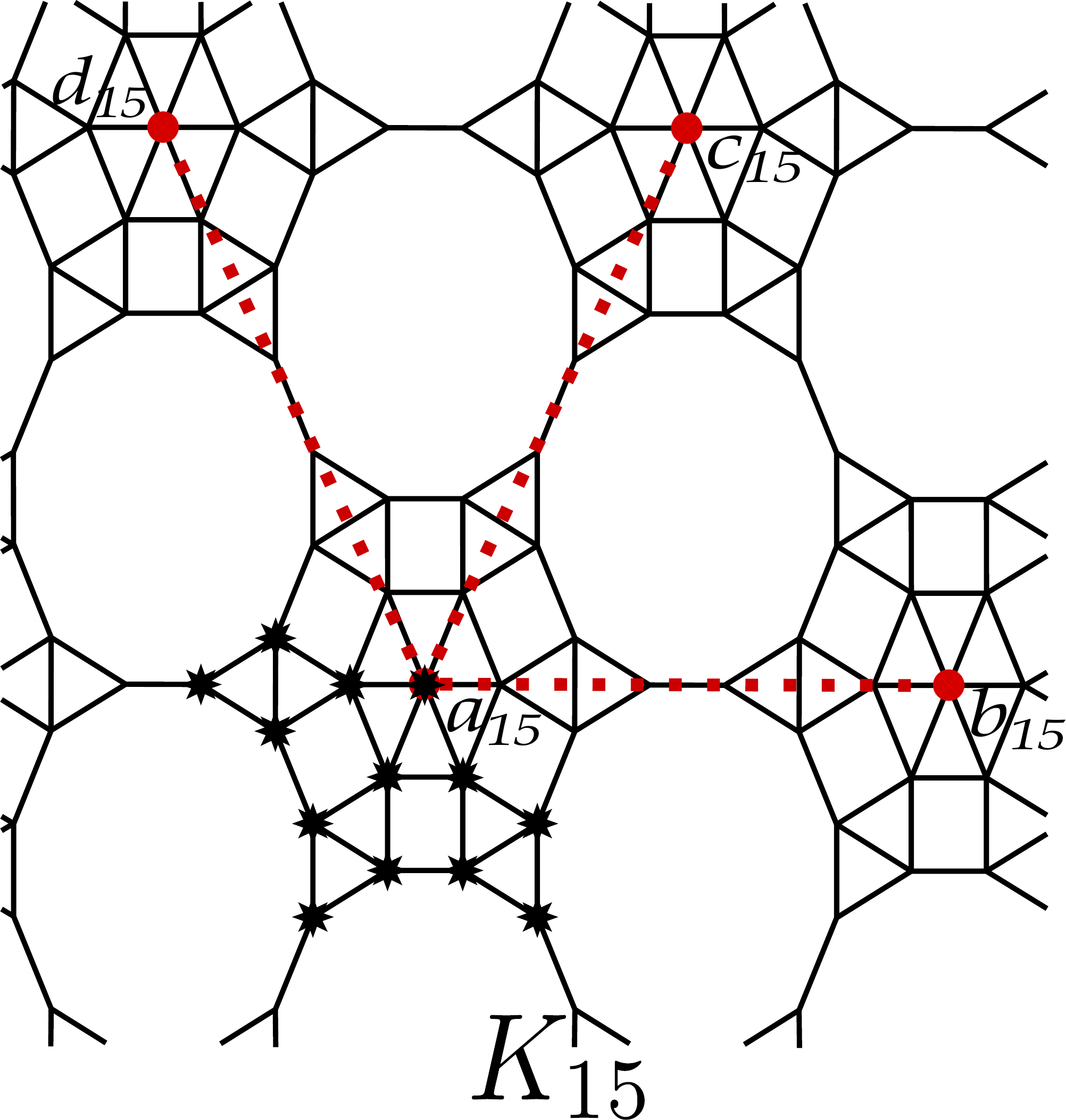}
     \vspace{-7mm}
\end{figure}
\begin{figure}[H]
    \centering
    \includegraphics[height=2.9cm, width= 2.9cm]{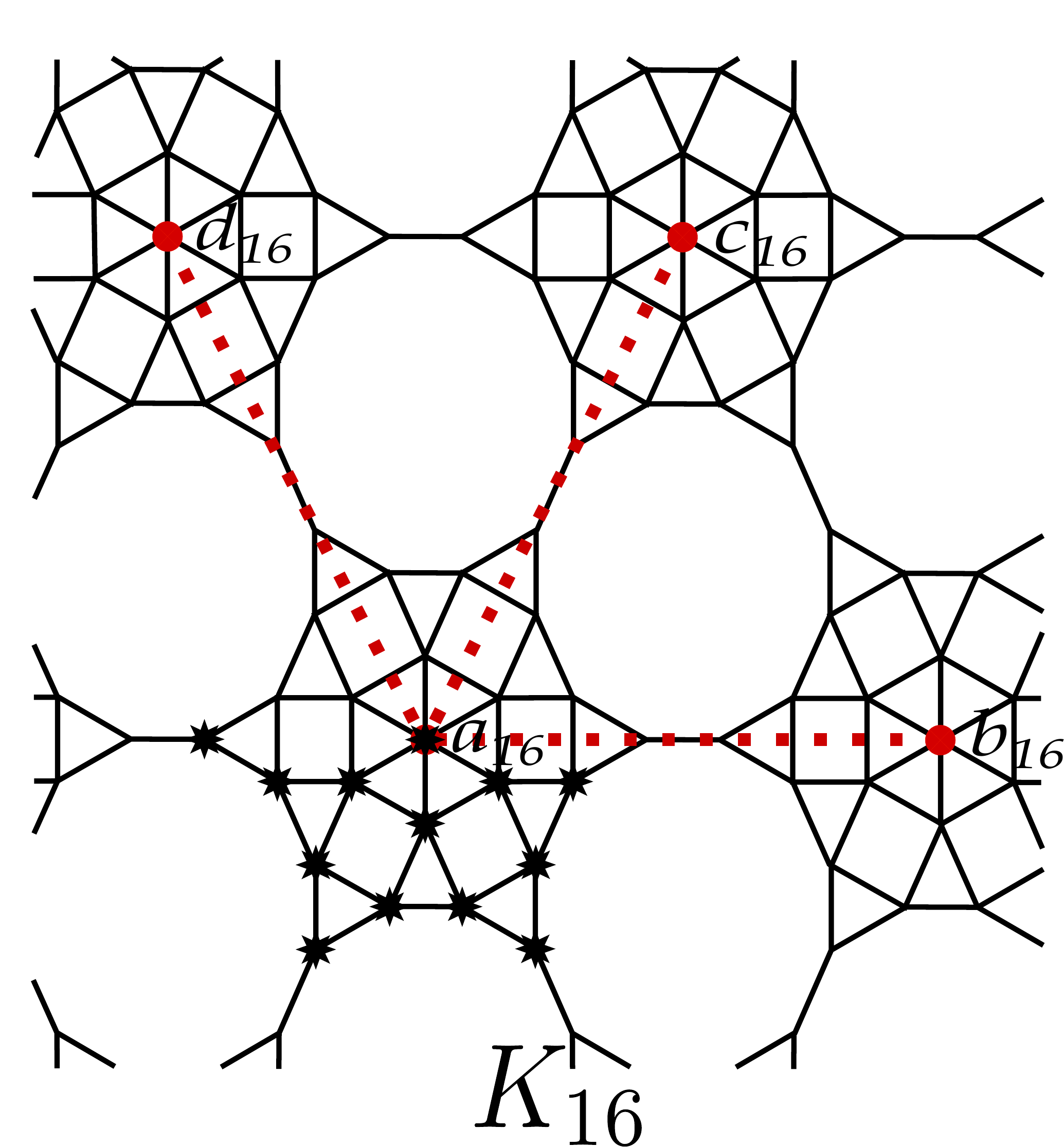}
    \includegraphics[height=2.9cm, width= 2.9cm]{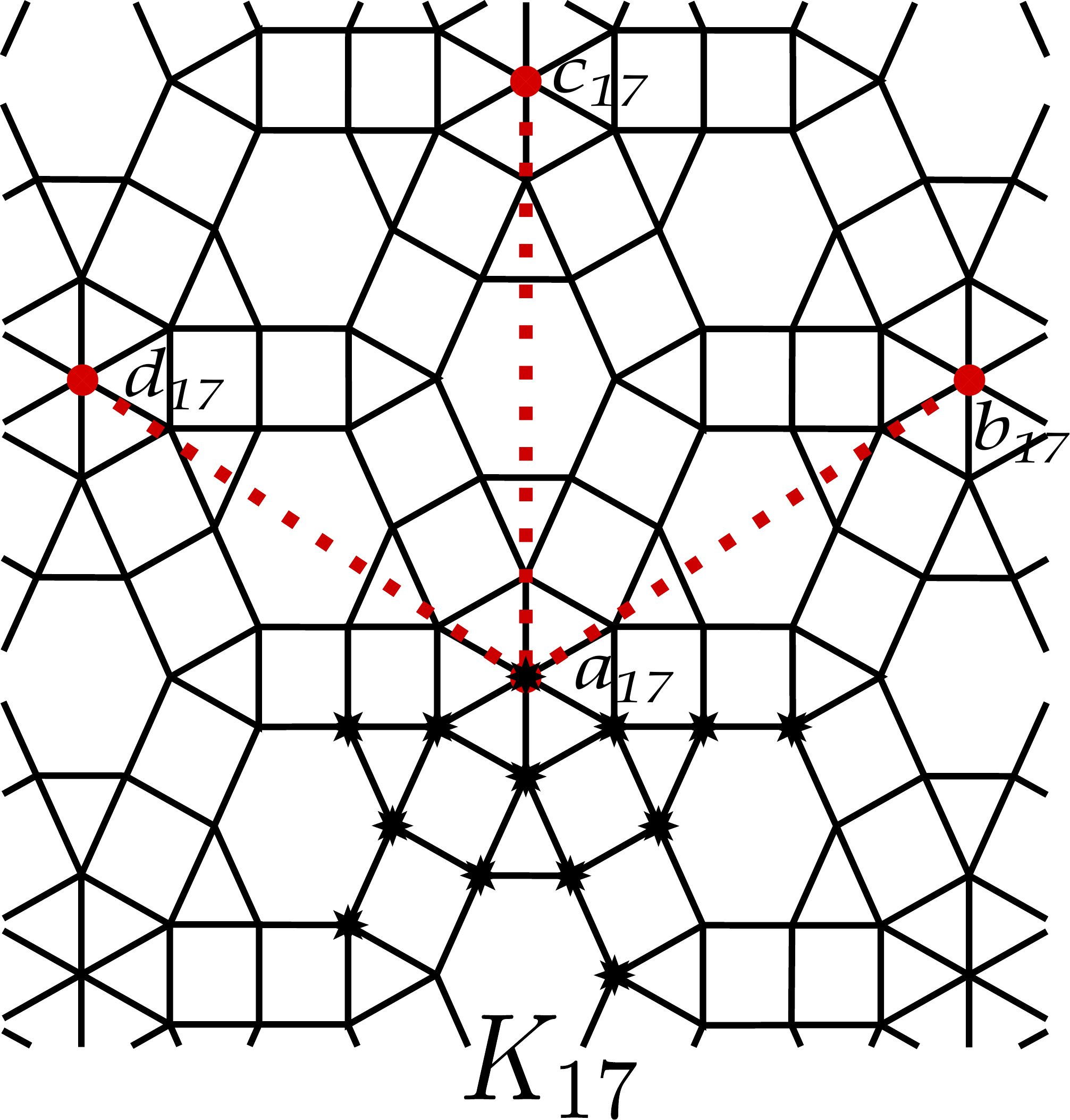}
    \includegraphics[height=2.9cm, width= 2.9cm]{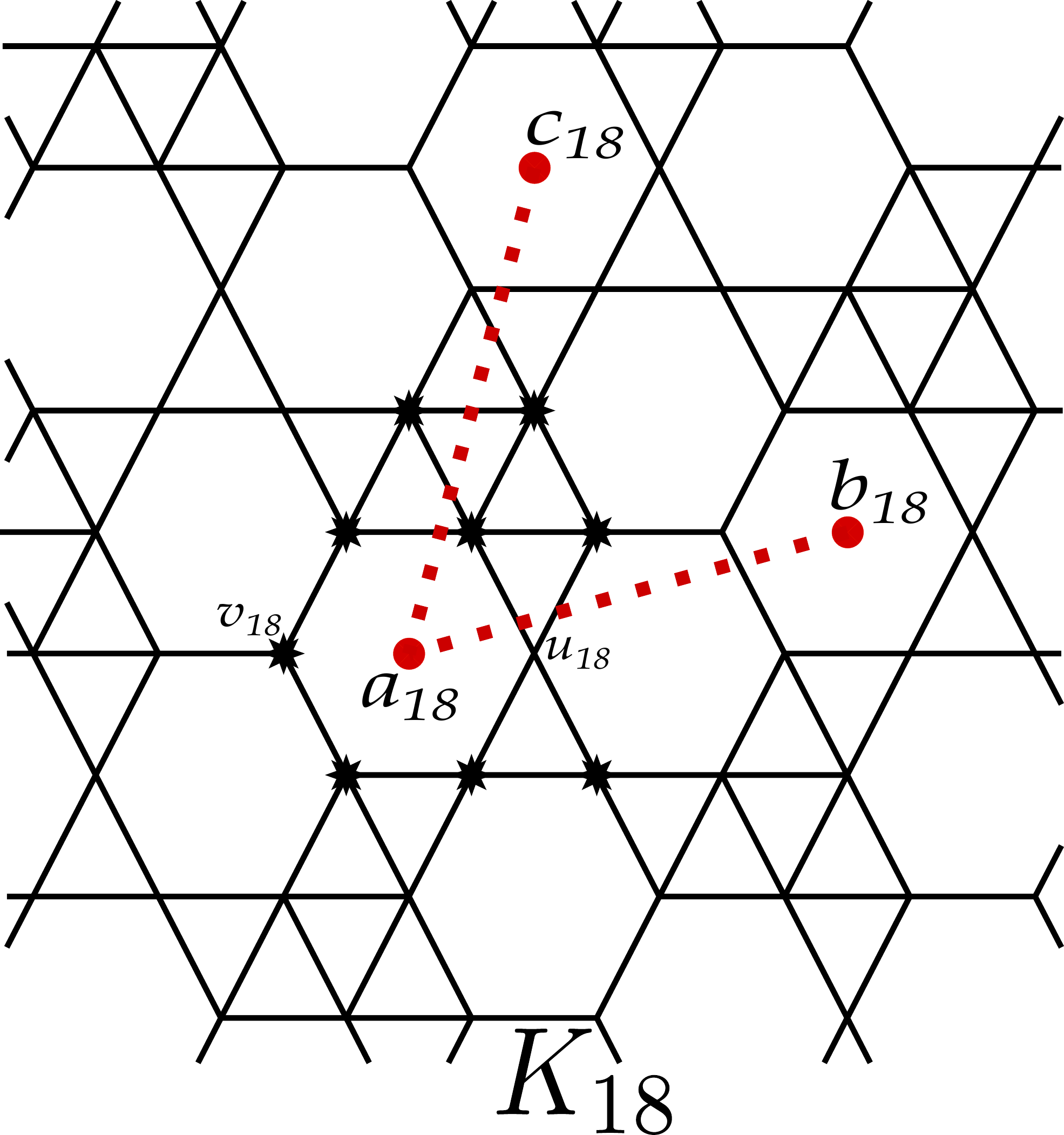}
    \includegraphics[height=2.9cm, width= 2.9cm]{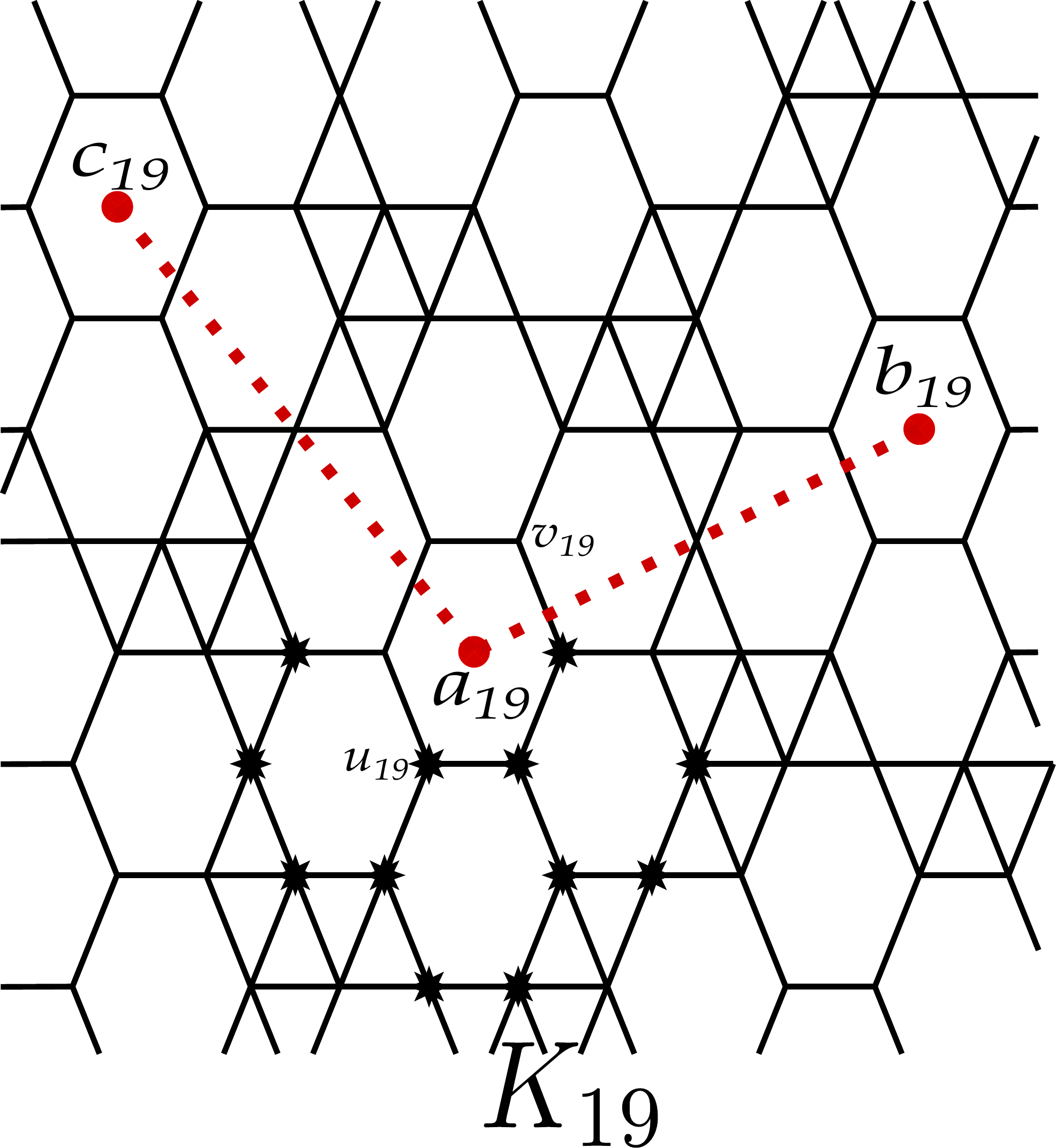}
    \includegraphics[height=2.9cm, width= 2.9cm]{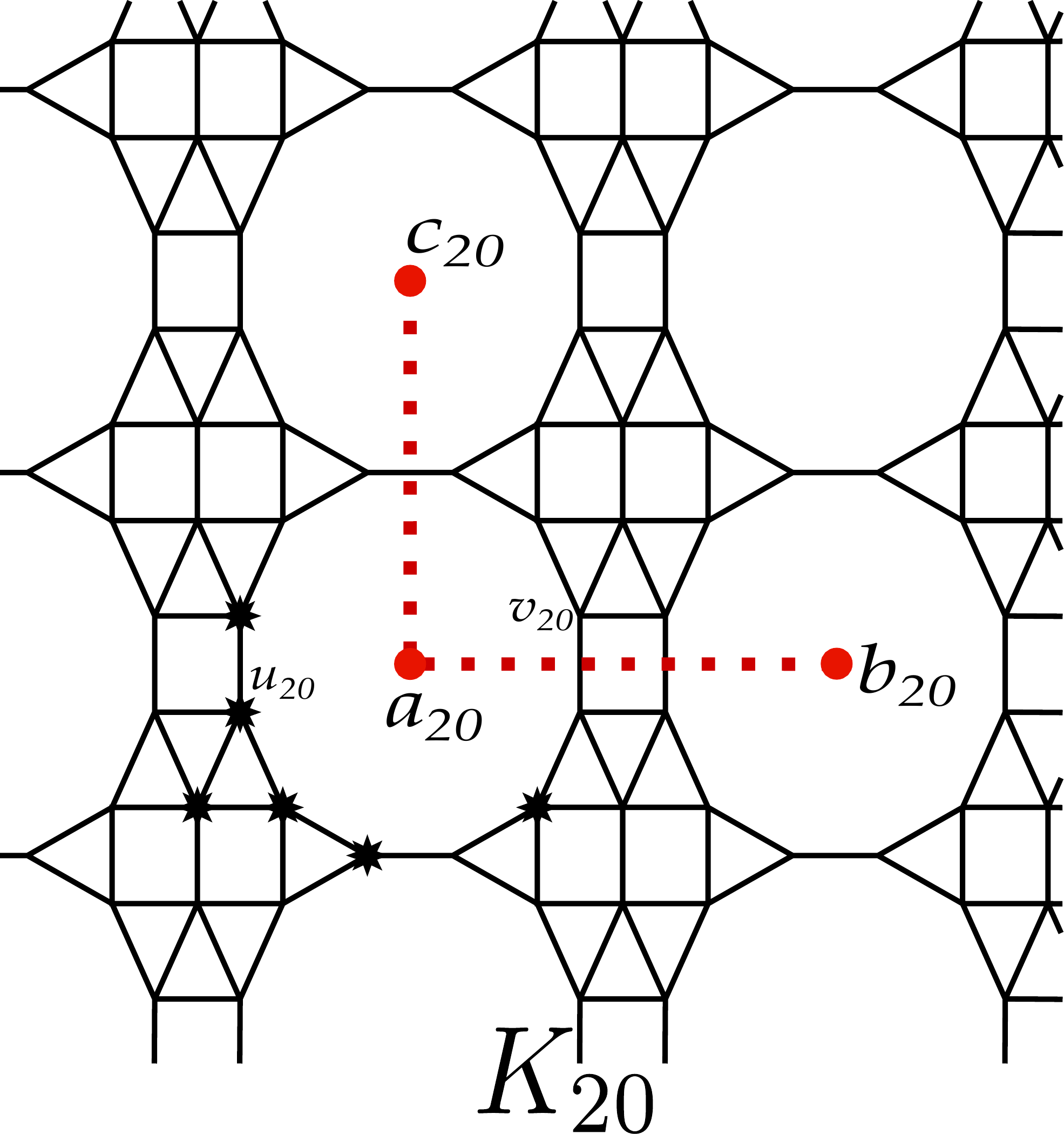}
     \vspace{-7mm}
\end{figure}
\begin{figure}[H]
    \centering
    \includegraphics[height=2.9cm, width= 2.9cm]{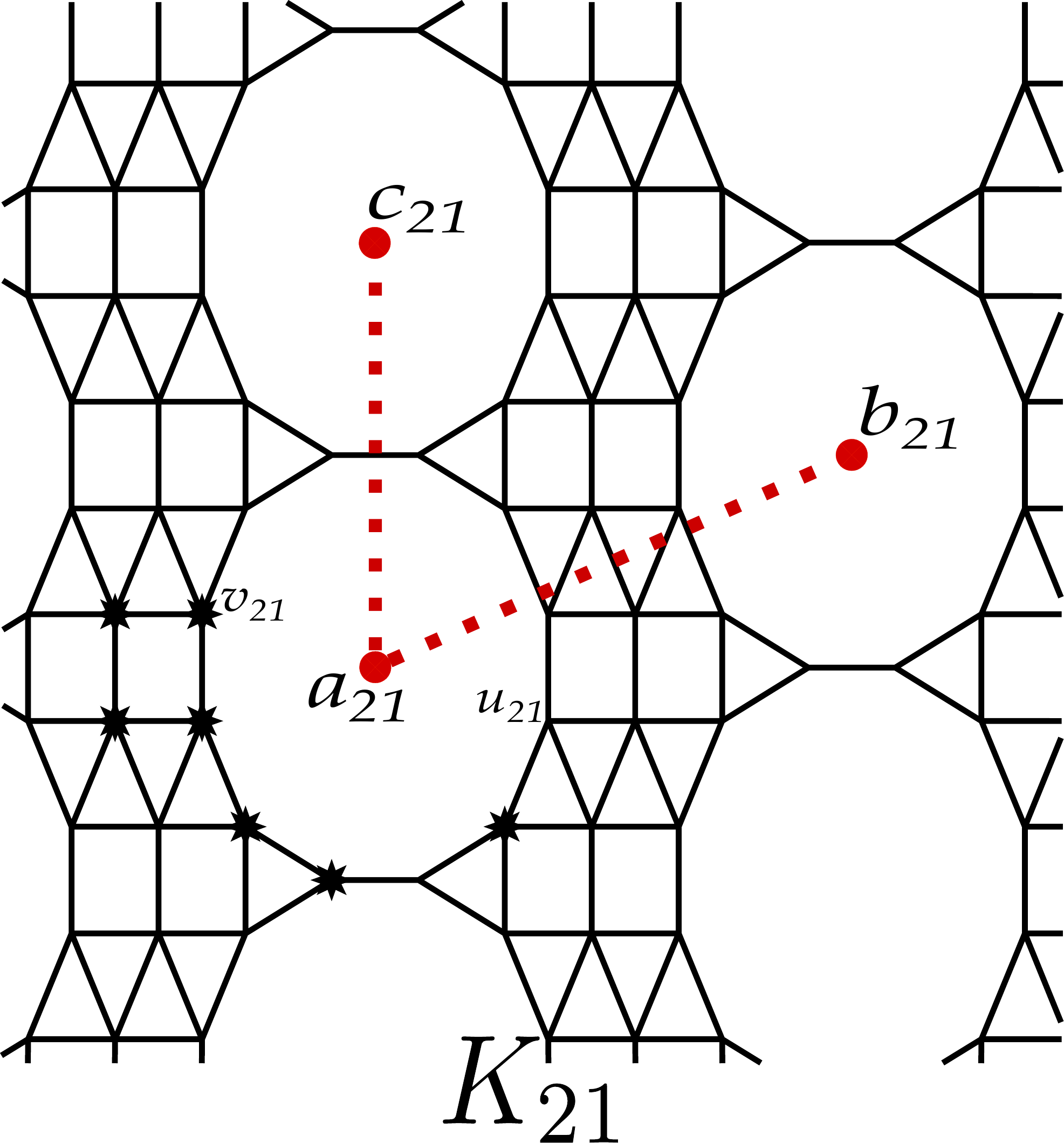}
    \includegraphics[height=2.9cm, width= 2.9cm]{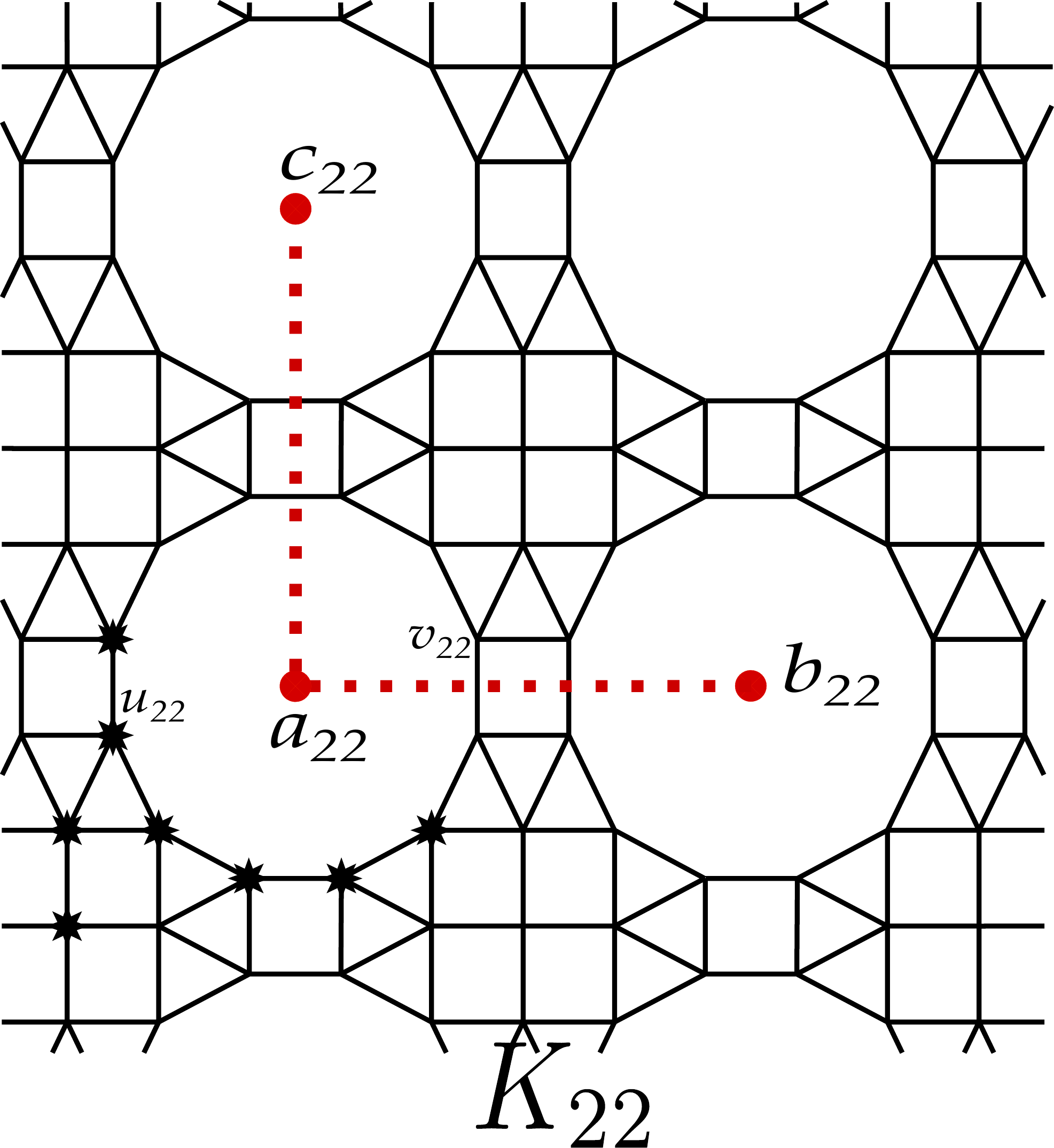}
    \includegraphics[height=2.9cm, width= 2.9cm]{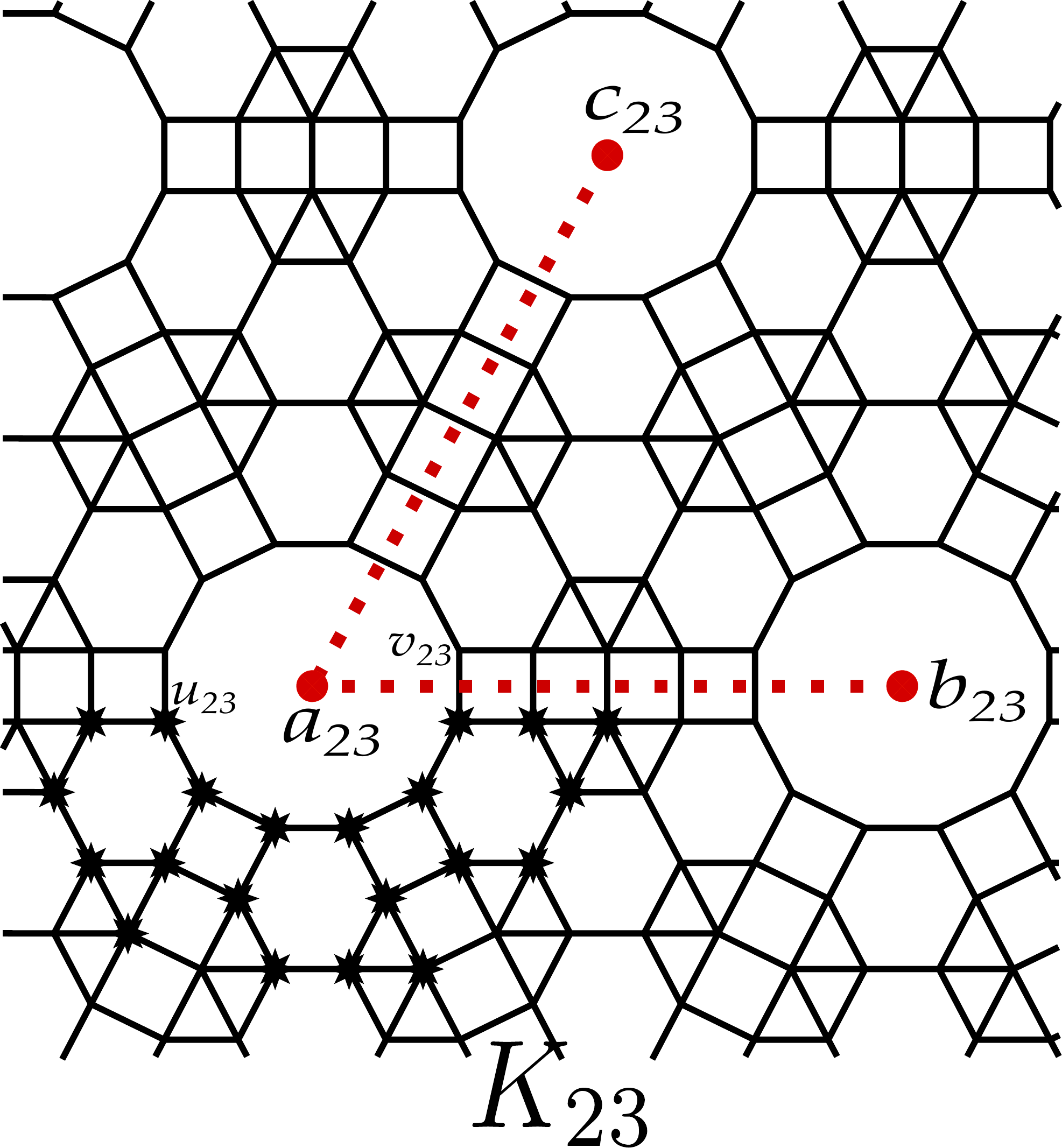}
    \includegraphics[height=2.9cm, width= 2.9cm]{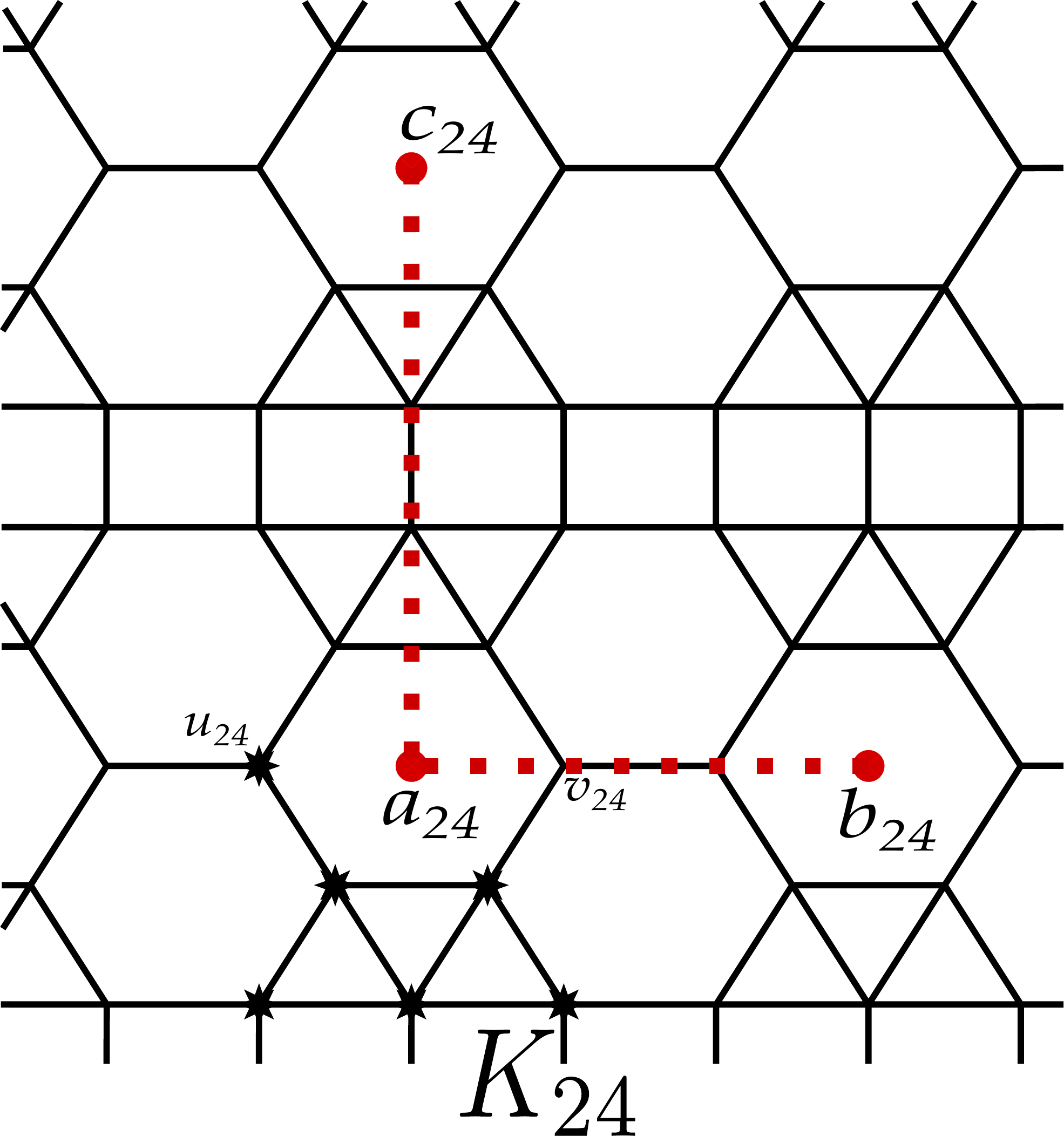}
    \includegraphics[height=2.9cm, width= 2.9cm]{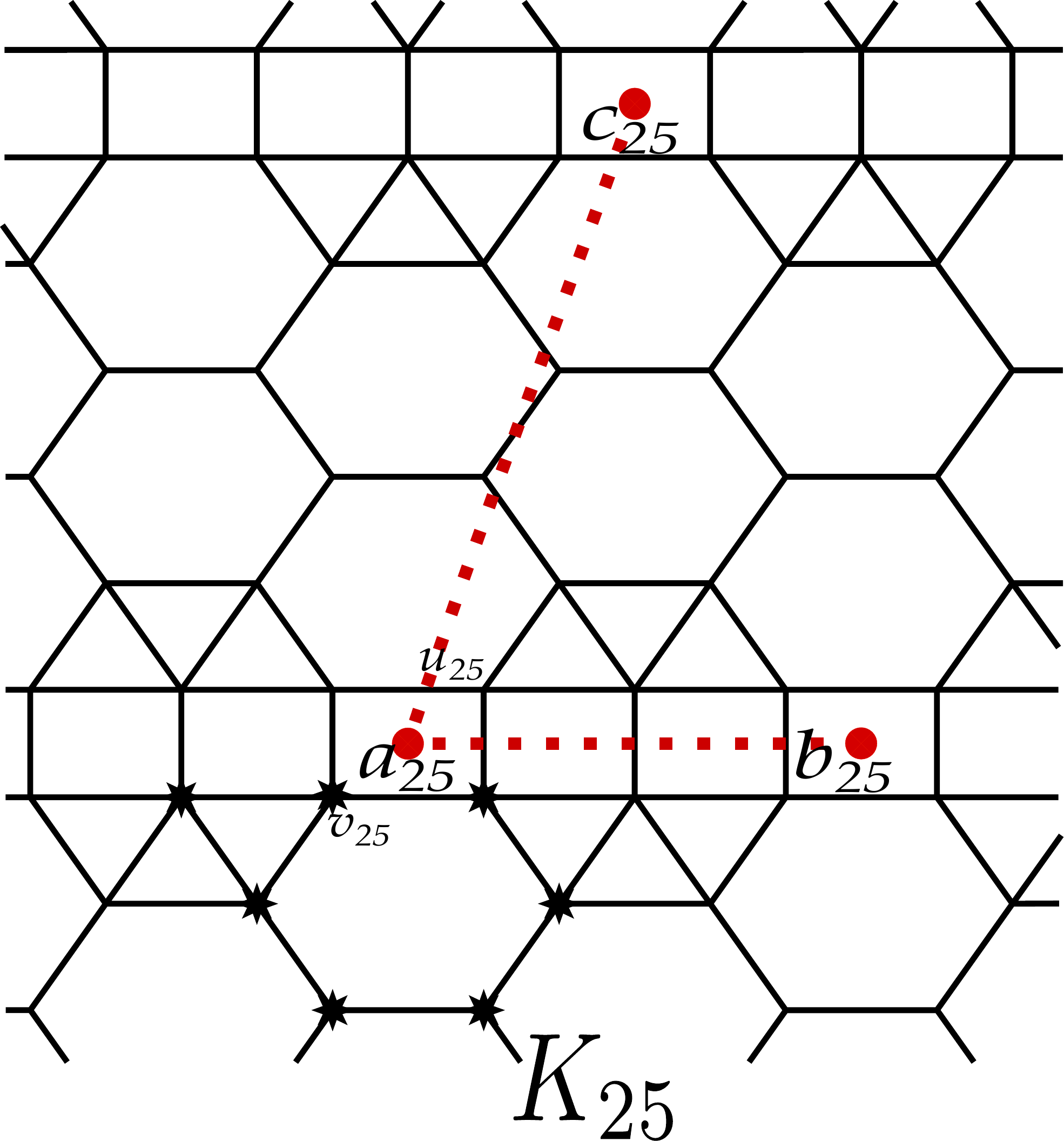}
     \vspace{-7mm}
\end{figure}
\begin{figure}[H]
    \centering
    \includegraphics[height=2.9cm, width= 2.9cm]{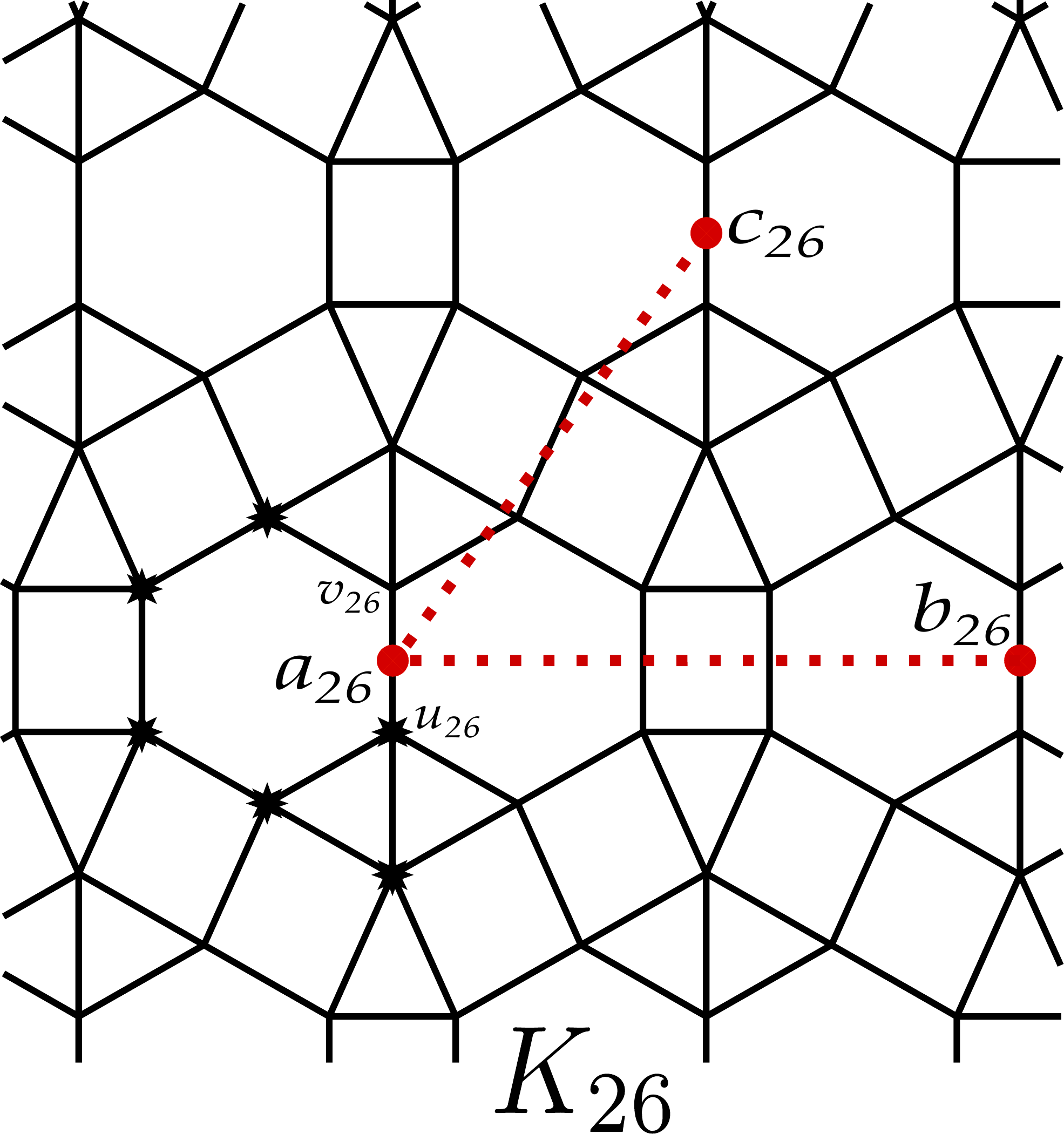}
    \includegraphics[height=2.9cm, width= 2.9cm]{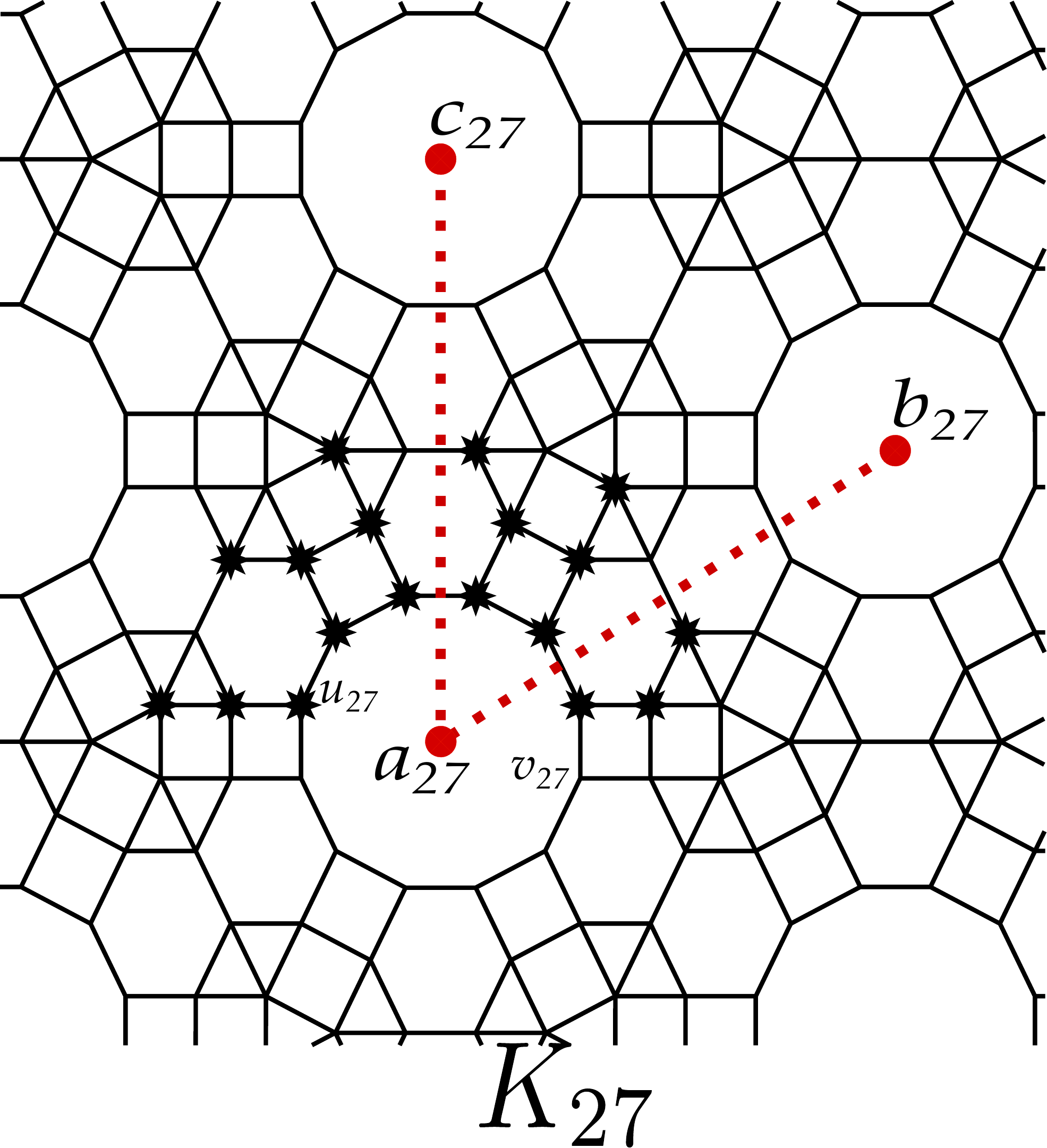}
    \includegraphics[height=2.9cm, width= 2.9cm]{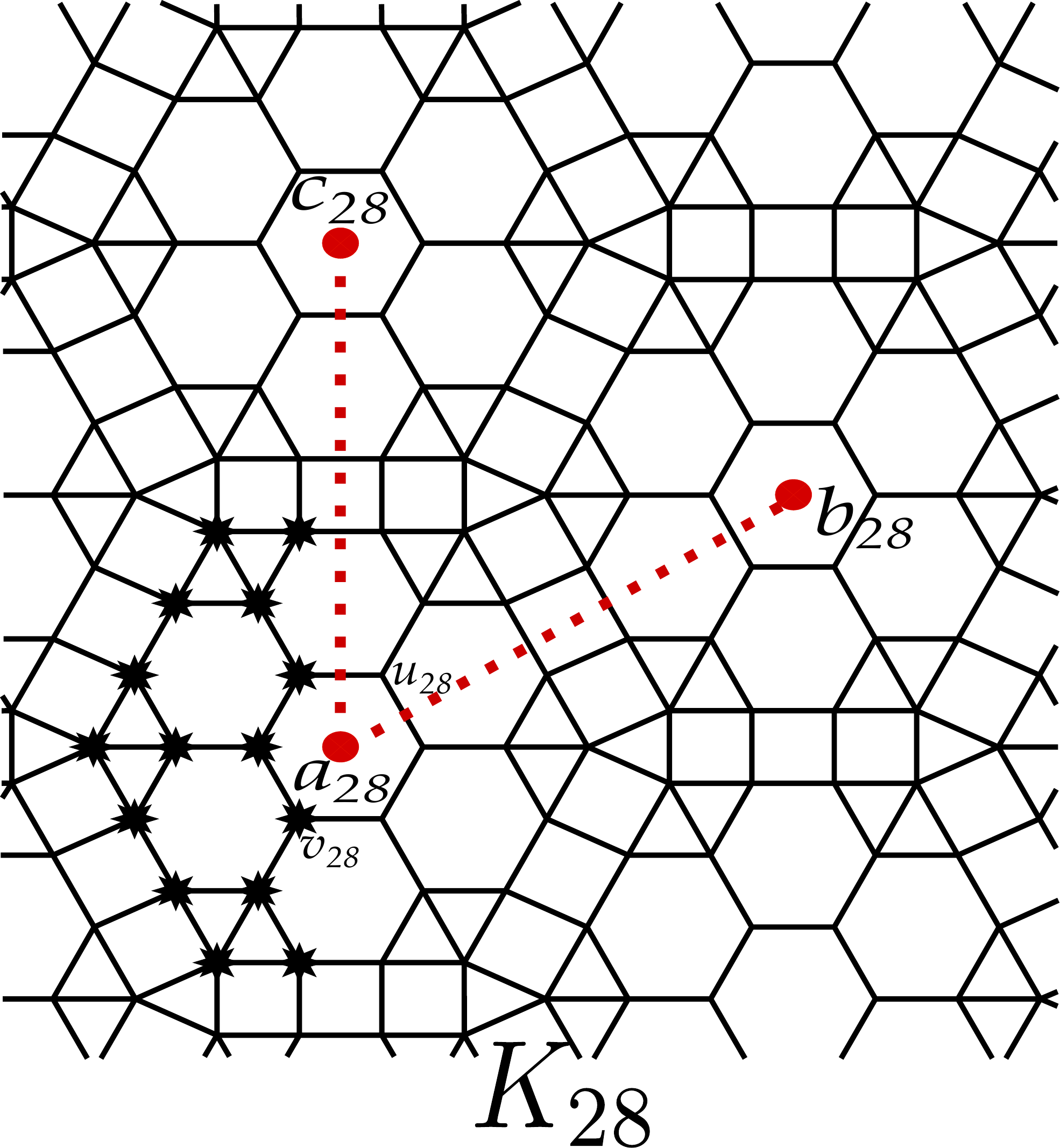}
    \includegraphics[height=2.9cm, width= 2.9cm]{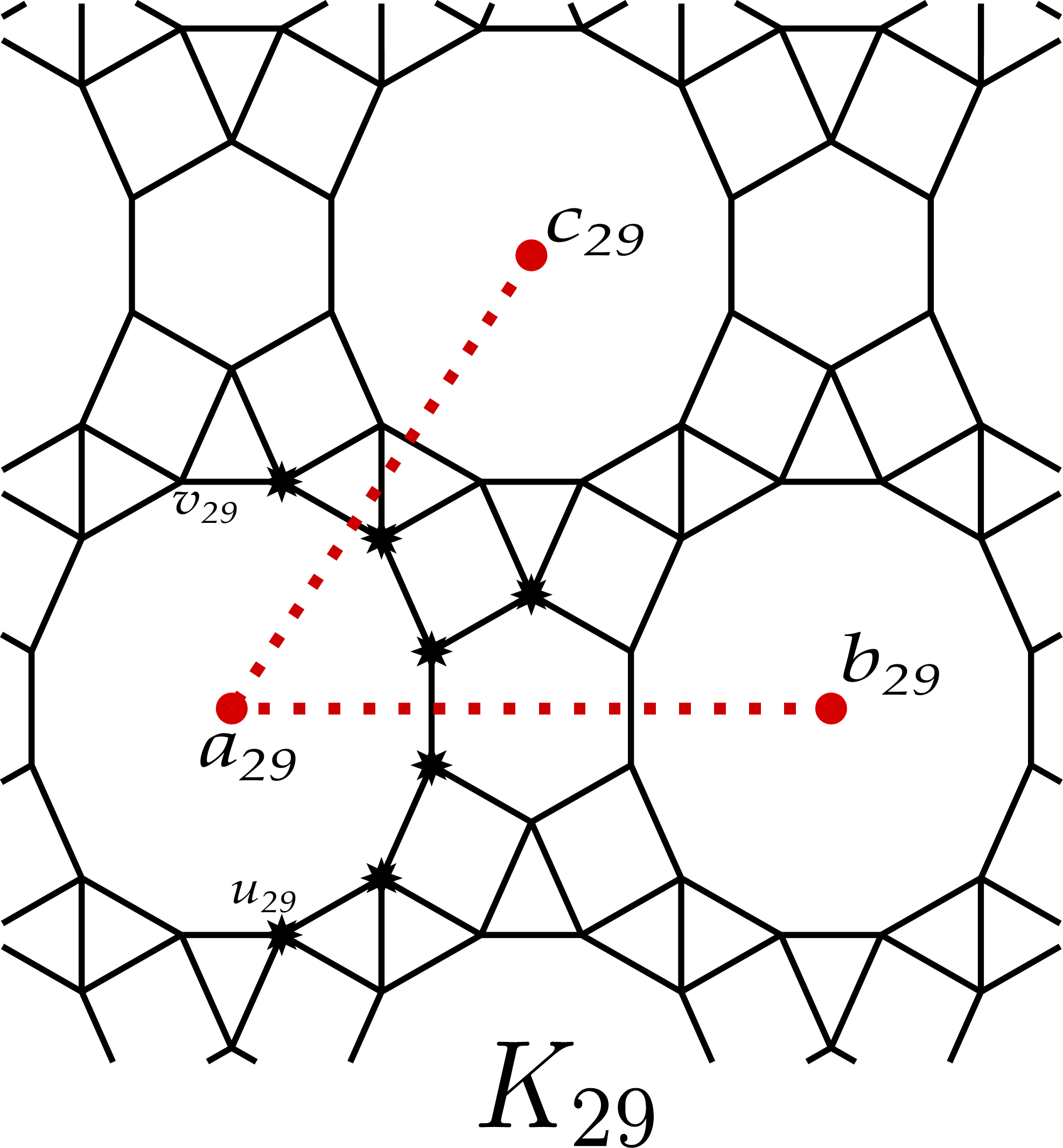}
    \includegraphics[height=2.9cm, width= 2.9cm]{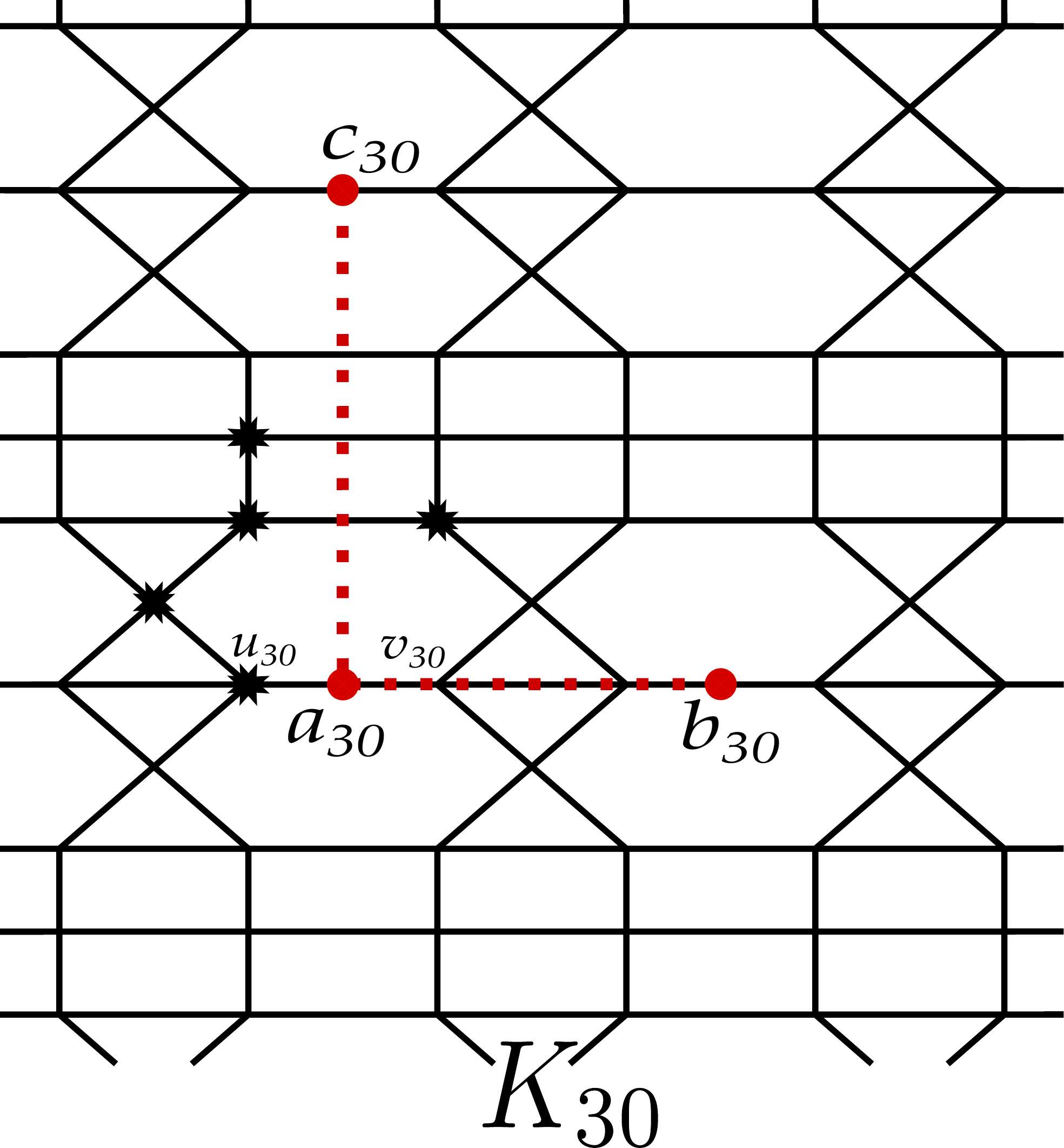}
     \vspace{-7mm}
\end{figure}
\begin{figure}[H]
    \centering
    \includegraphics[height=2.9cm, width= 2.9cm]{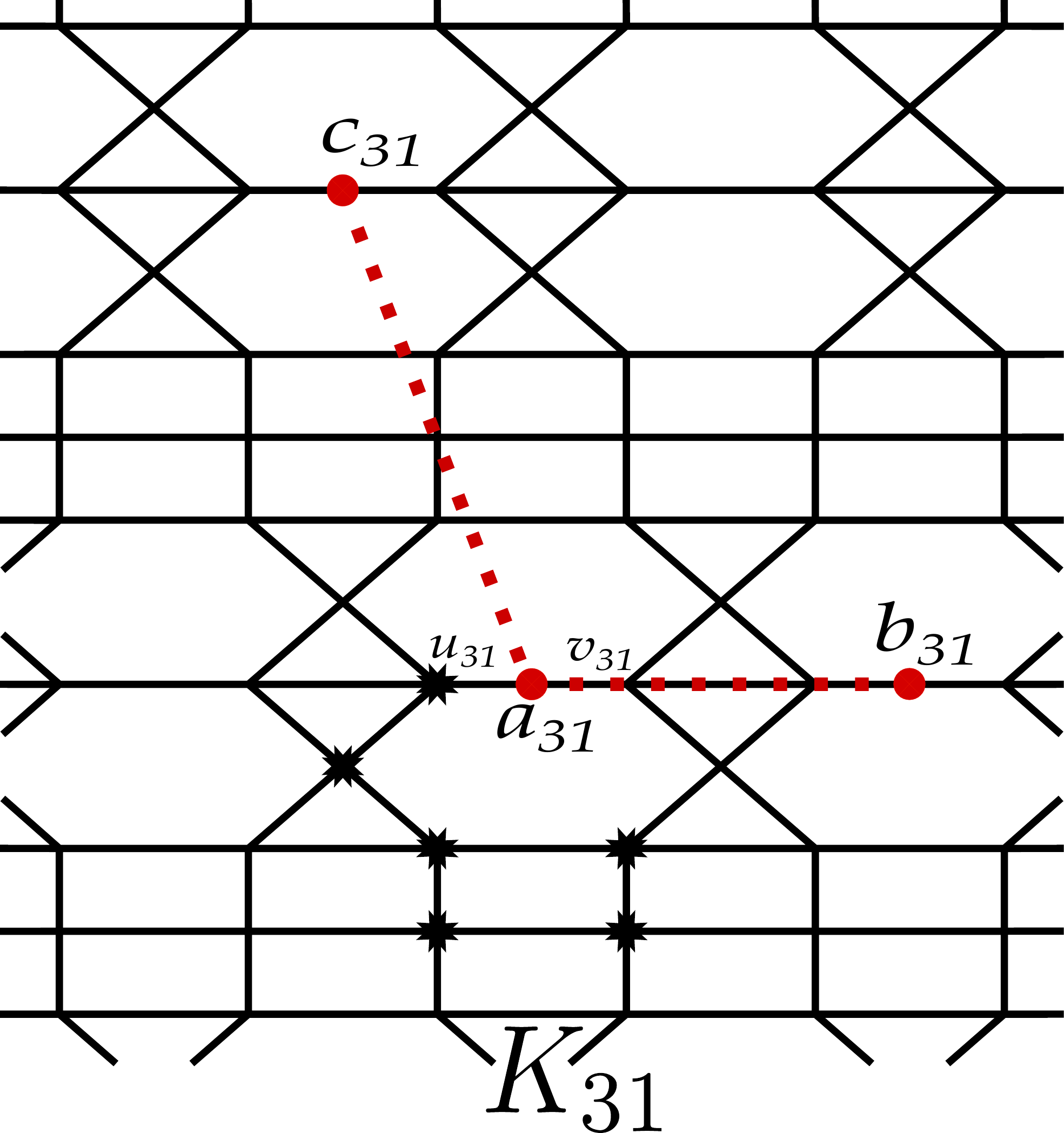}
    \includegraphics[height=2.9cm, width= 2.9cm]{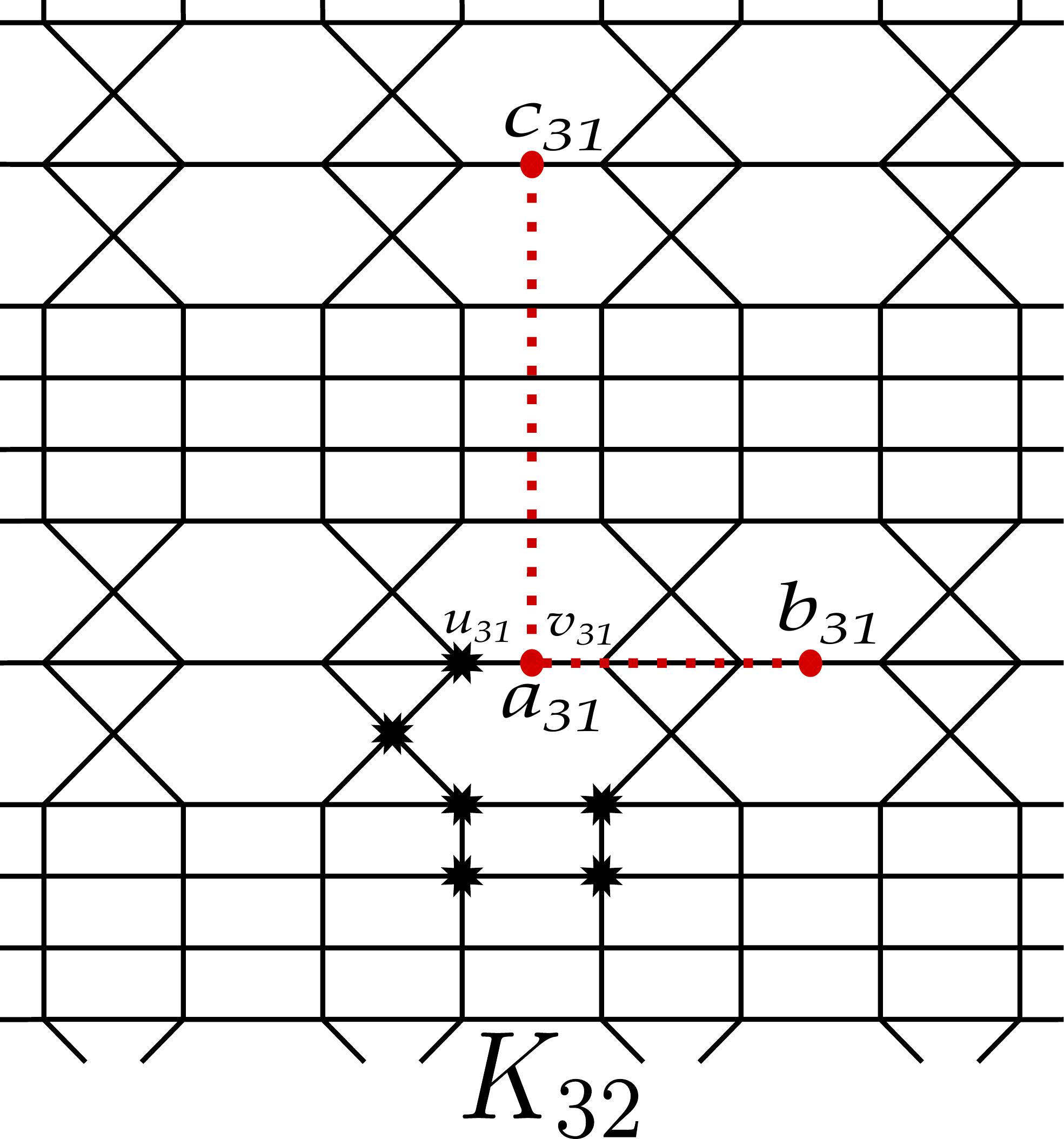}
    \includegraphics[height=2.9cm, width= 2.9cm]{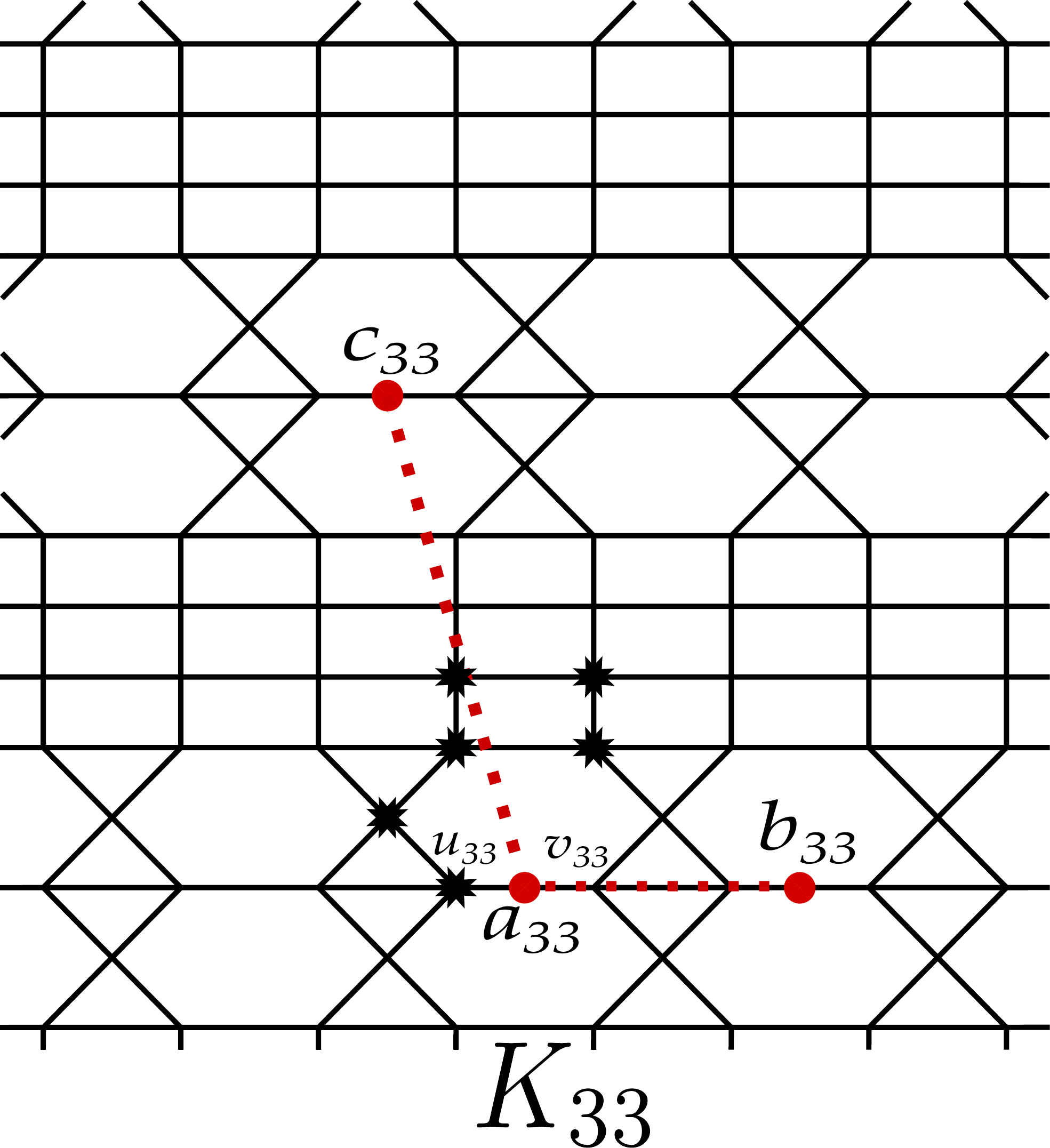}
    \includegraphics[height=2.9cm, width= 2.9cm]{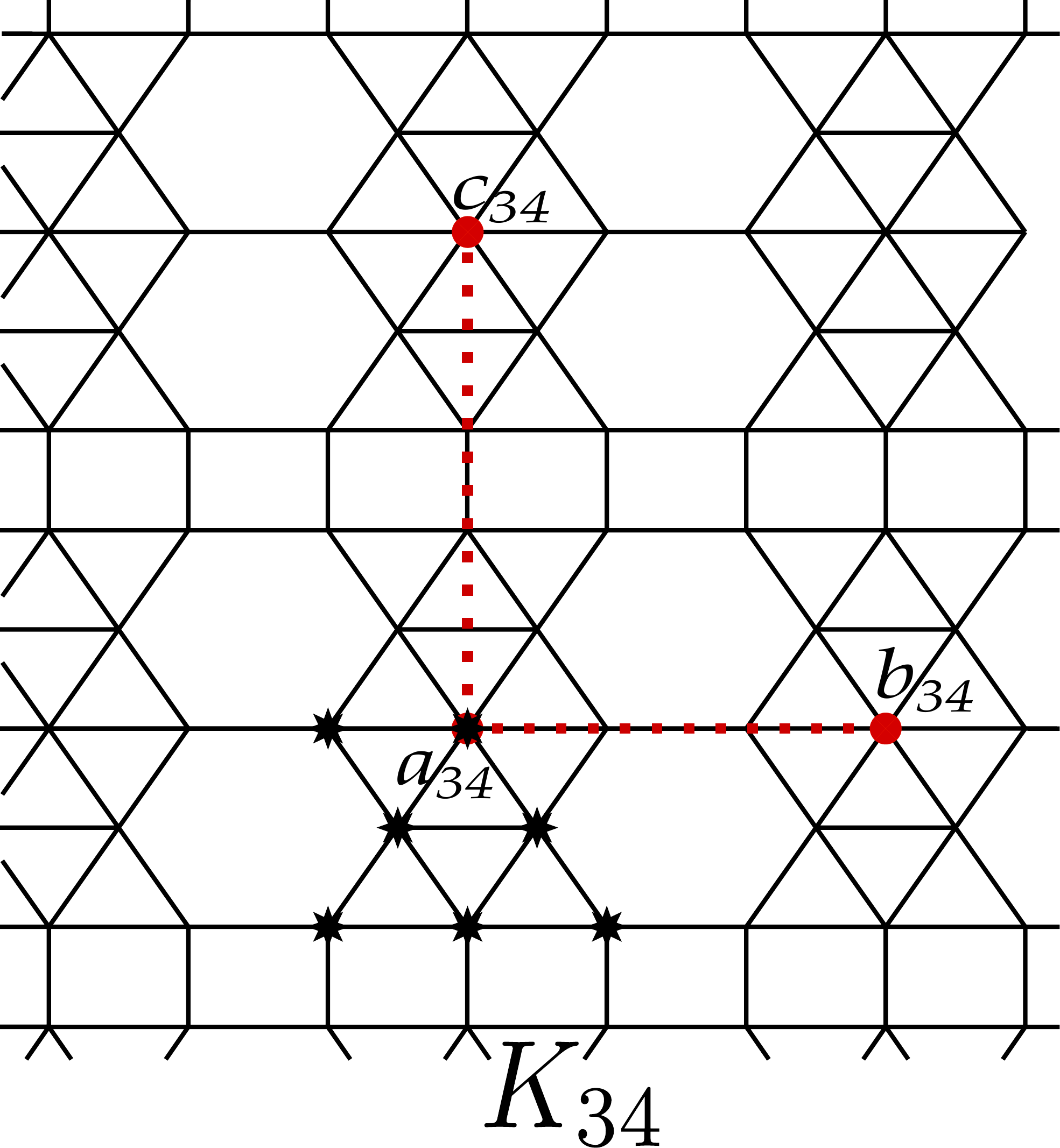}
    \includegraphics[height=2.9cm, width= 2.9cm]{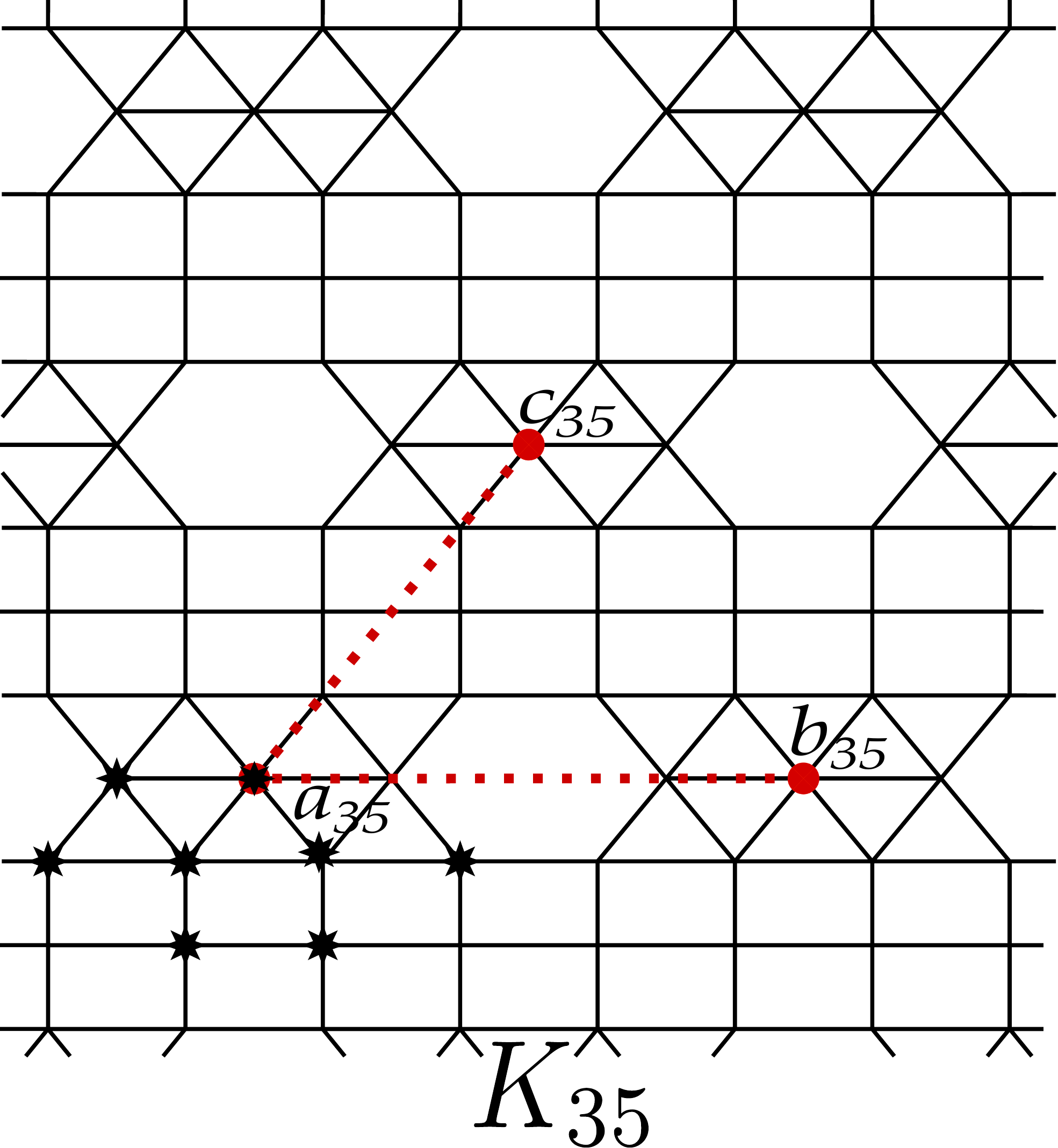}
 \vspace{-7mm}
\end{figure}
\begin{figure}[H]
    \centering
    \includegraphics[height=2.9cm, width= 2.9cm]{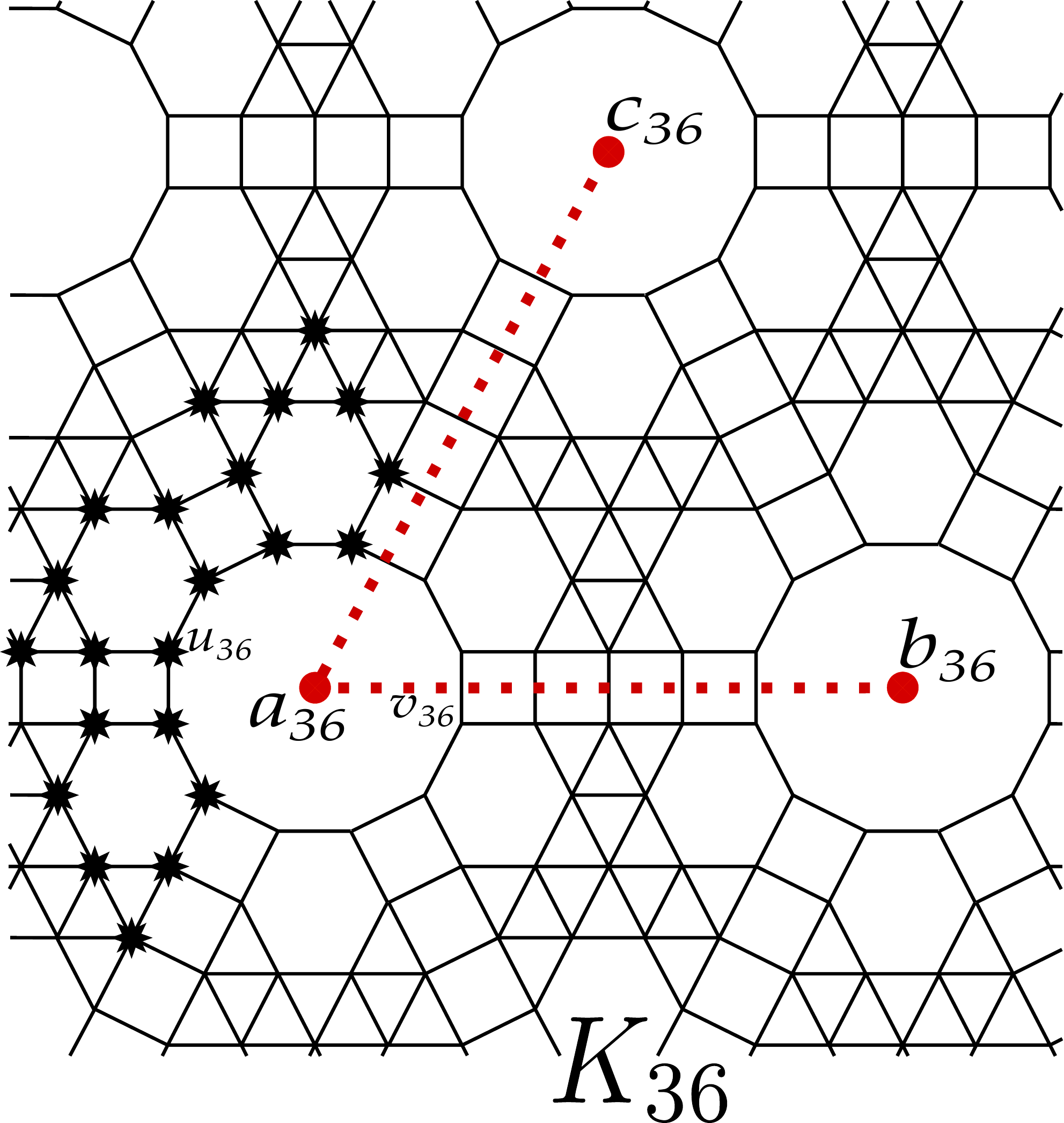}
    \includegraphics[height=2.9cm, width= 2.9cm]{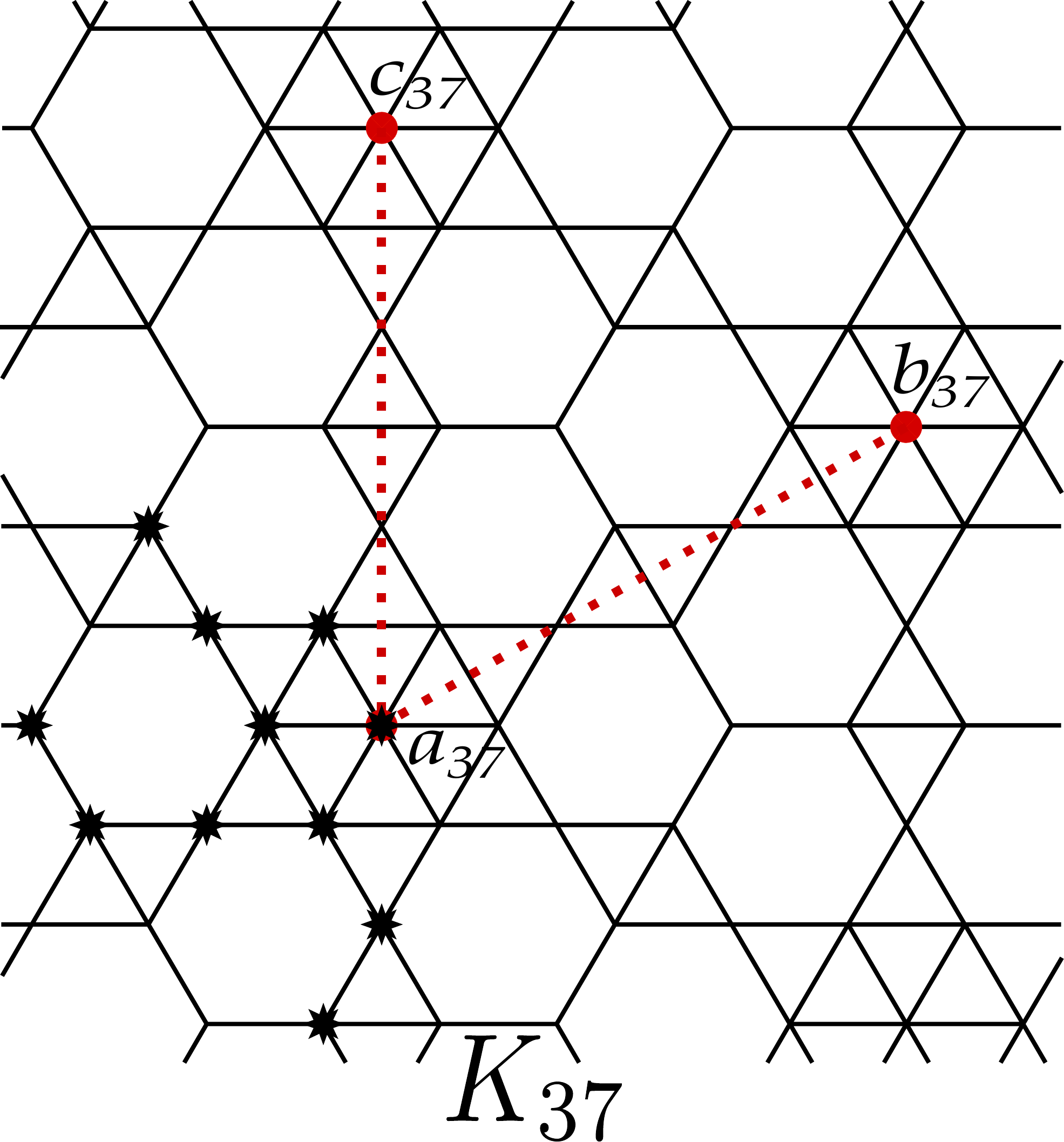}
    \includegraphics[height=2.9cm, width= 2.9cm]{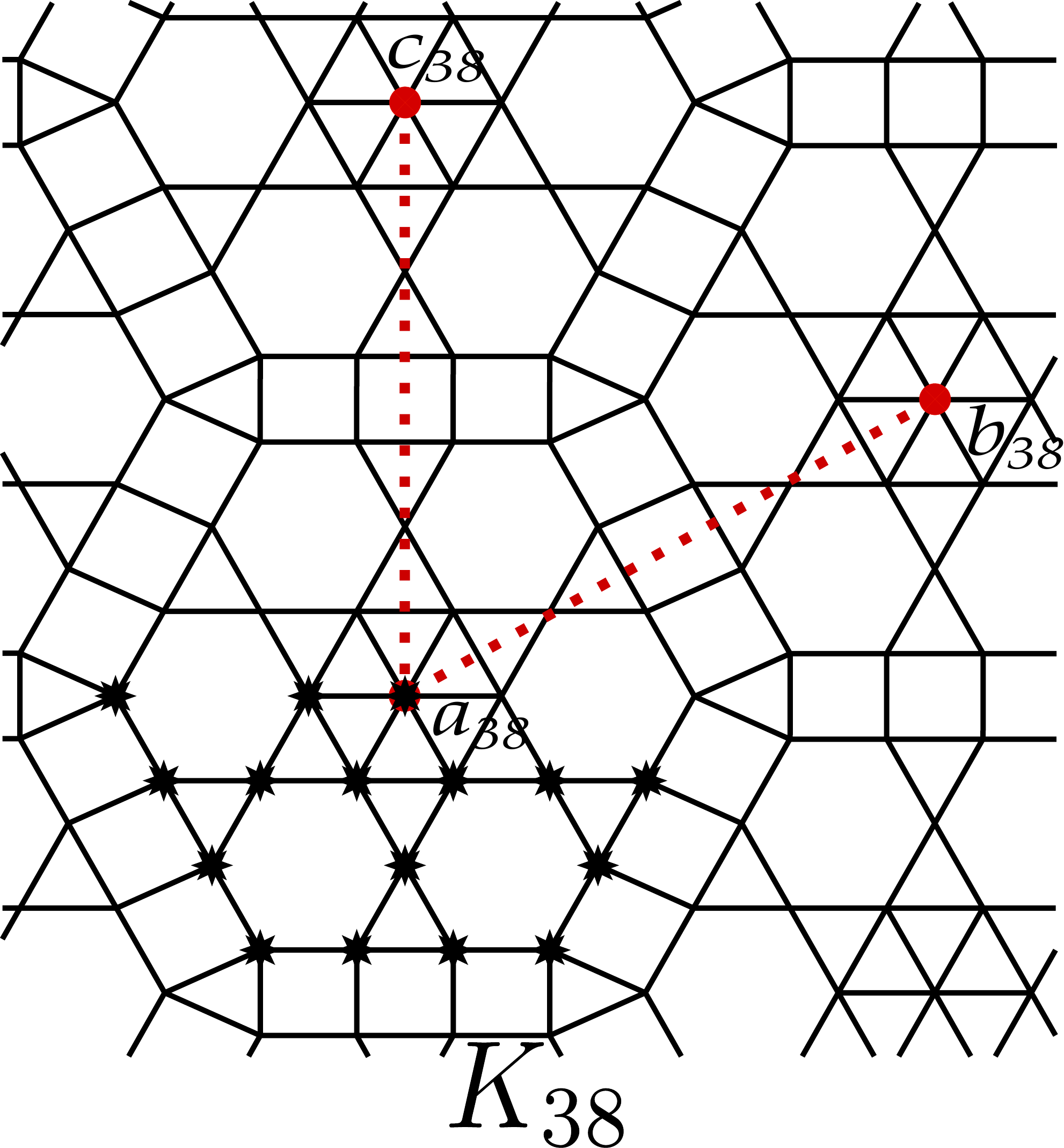}
    \includegraphics[height=2.9cm, width= 2.9cm]{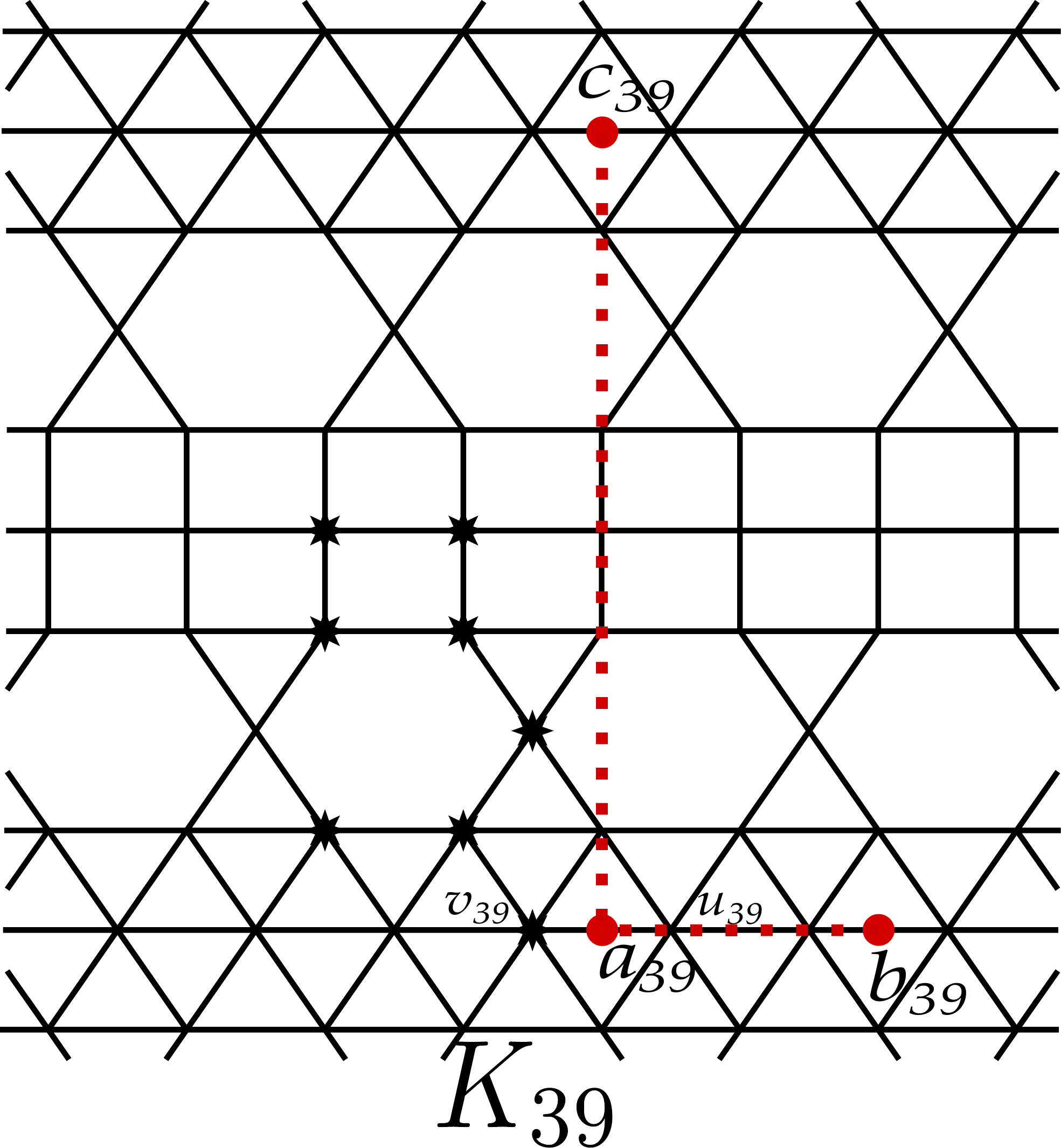}
    \includegraphics[height=2.9cm, width= 2.9cm]{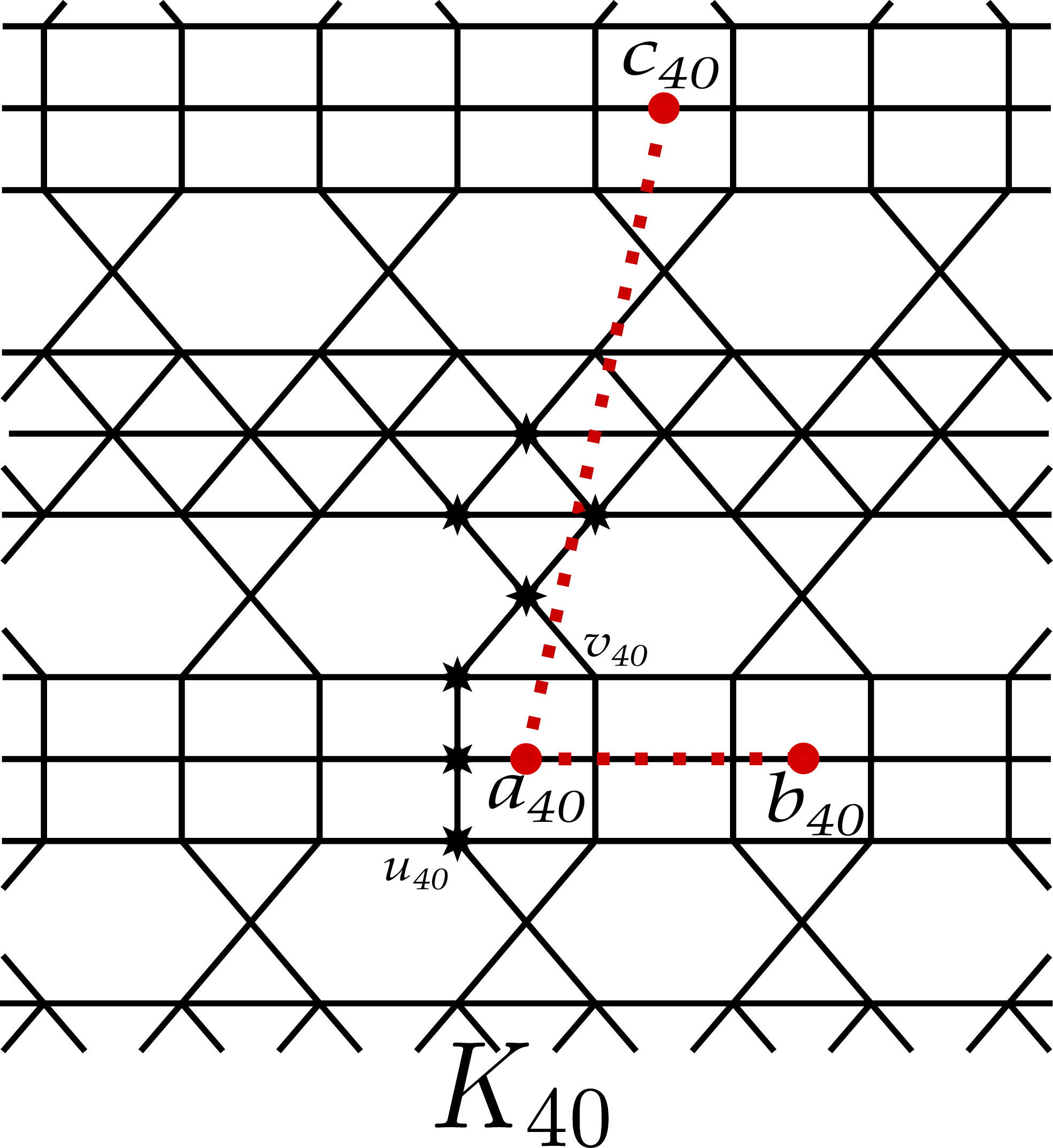}
    \end{figure}
     \vspace{-7mm}
\begin{figure}[H]
    \centering
    \includegraphics[height=2.9cm, width= 2.9cm]{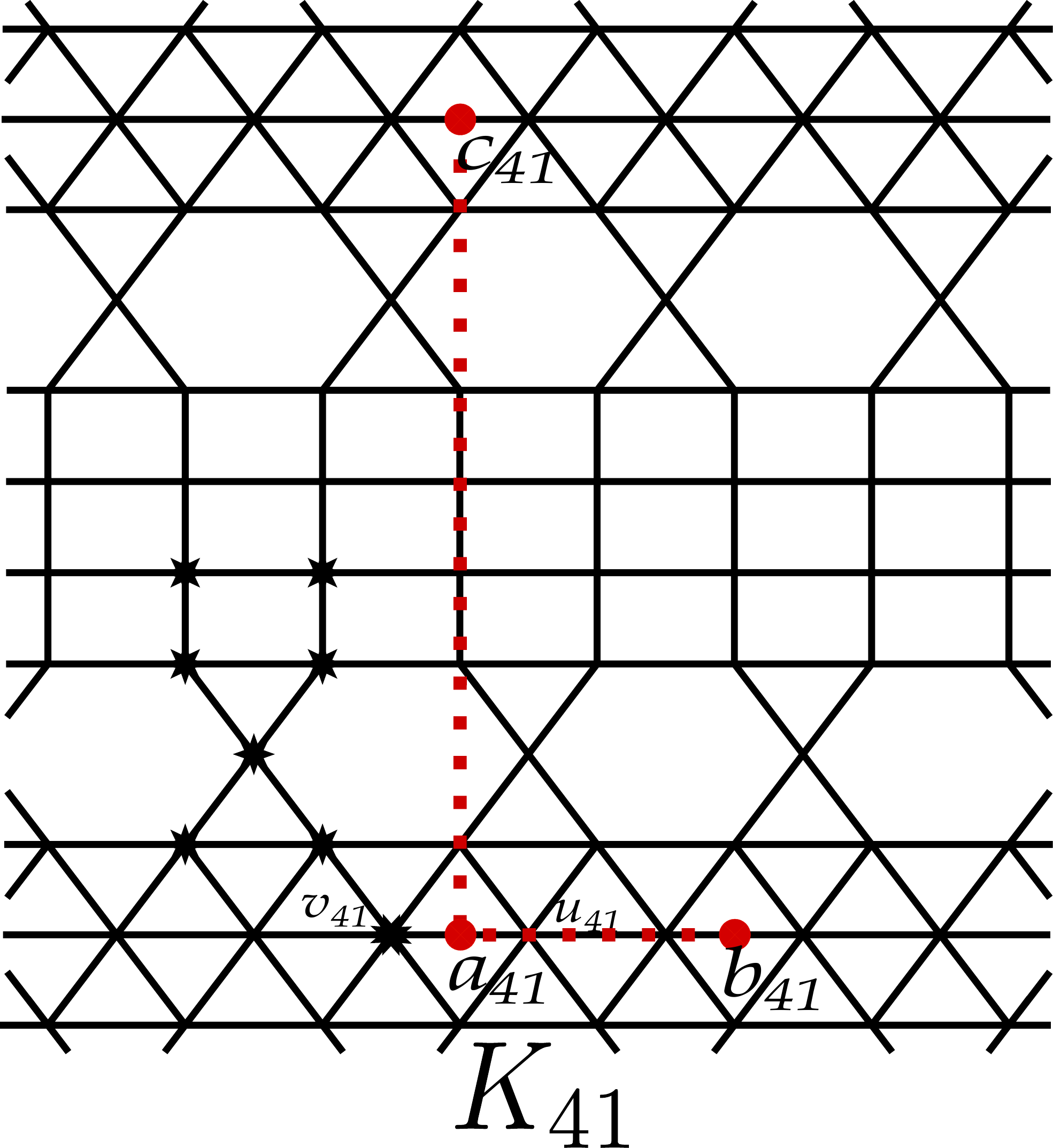}
    \includegraphics[height=2.9cm, width= 2.9cm]{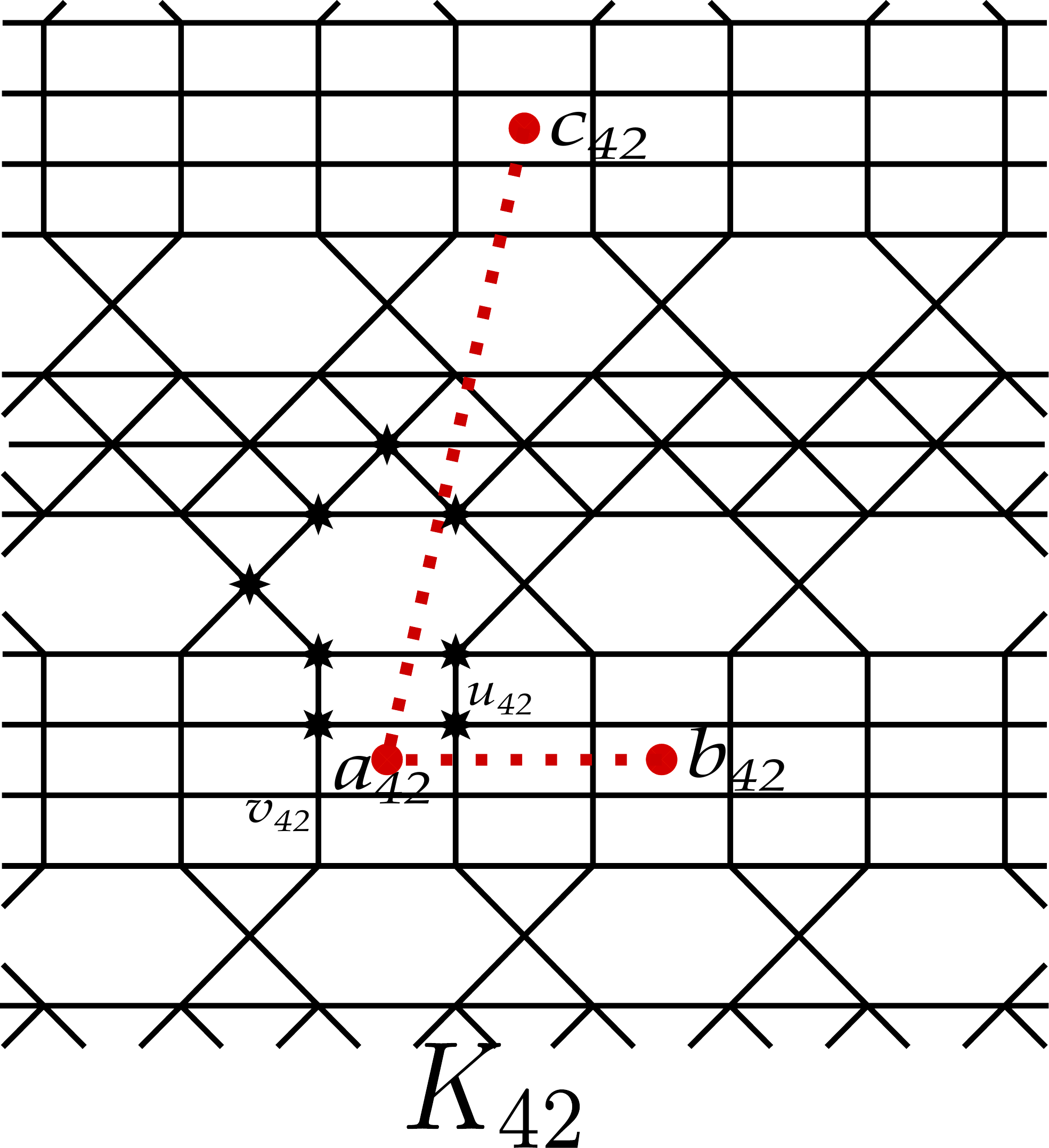}
    \includegraphics[height=2.9cm, width= 2.9cm]{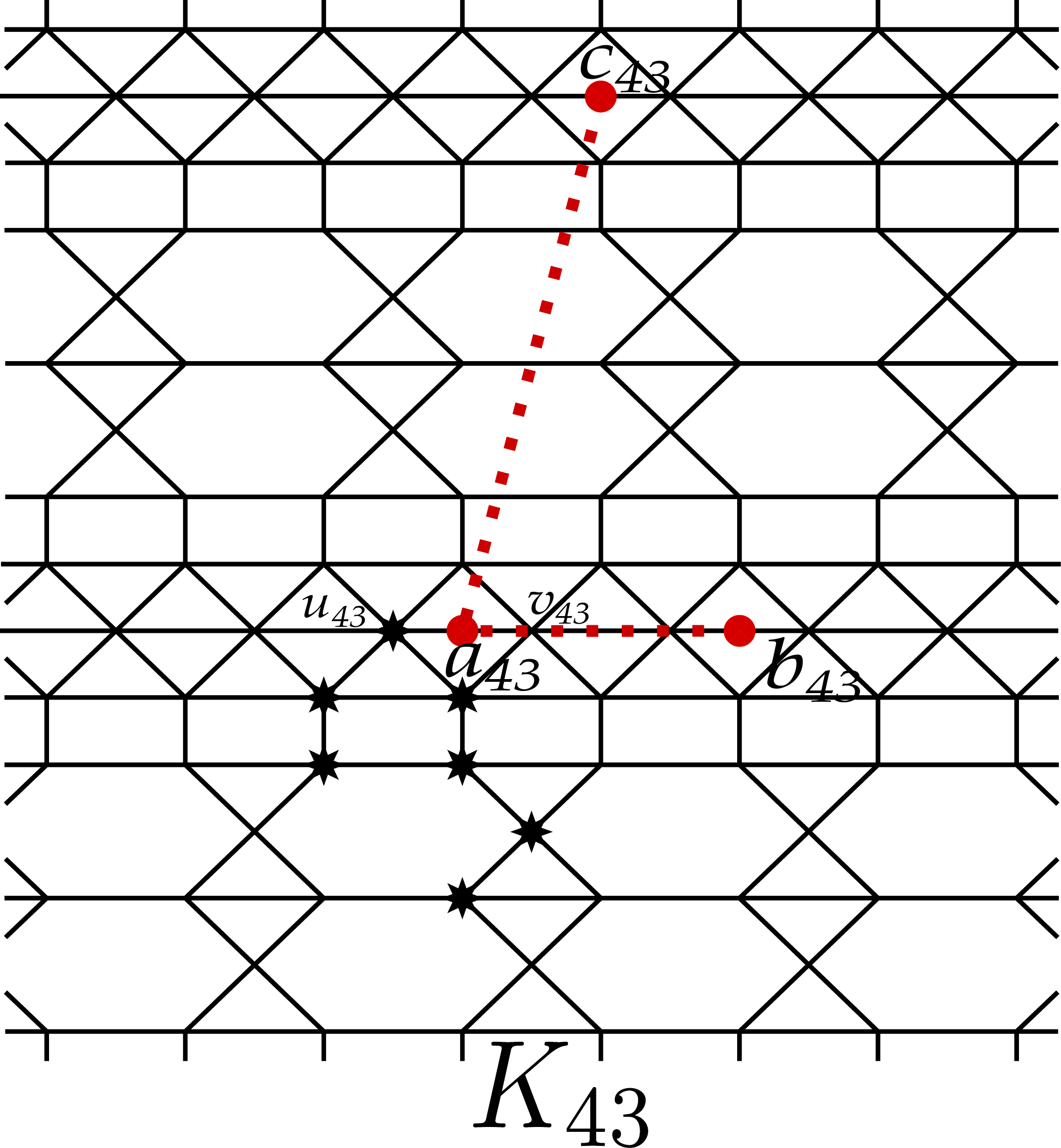}
    \includegraphics[height=2.9cm, width= 2.9cm]{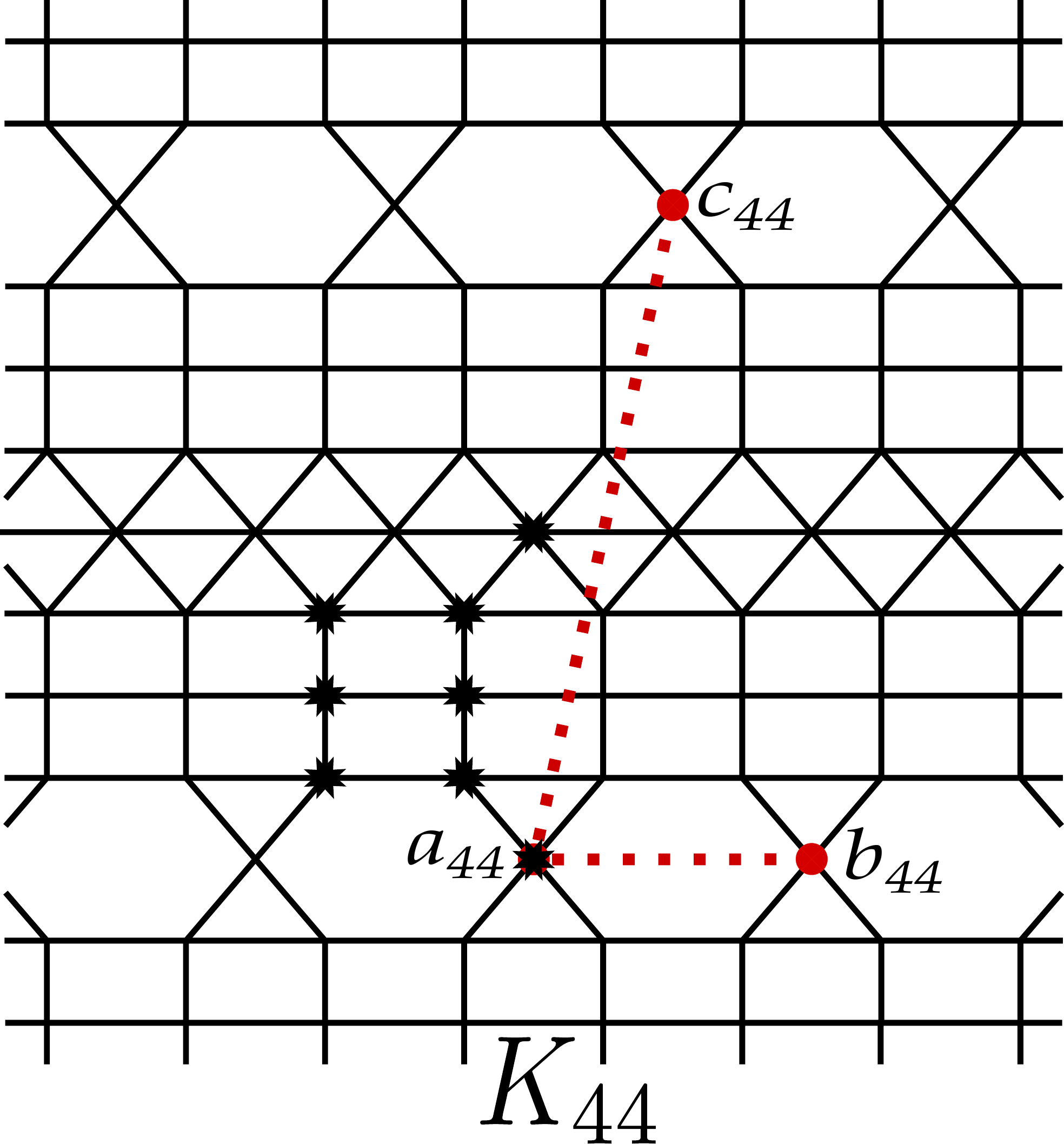}
    \includegraphics[height=2.9cm, width= 2.9cm]{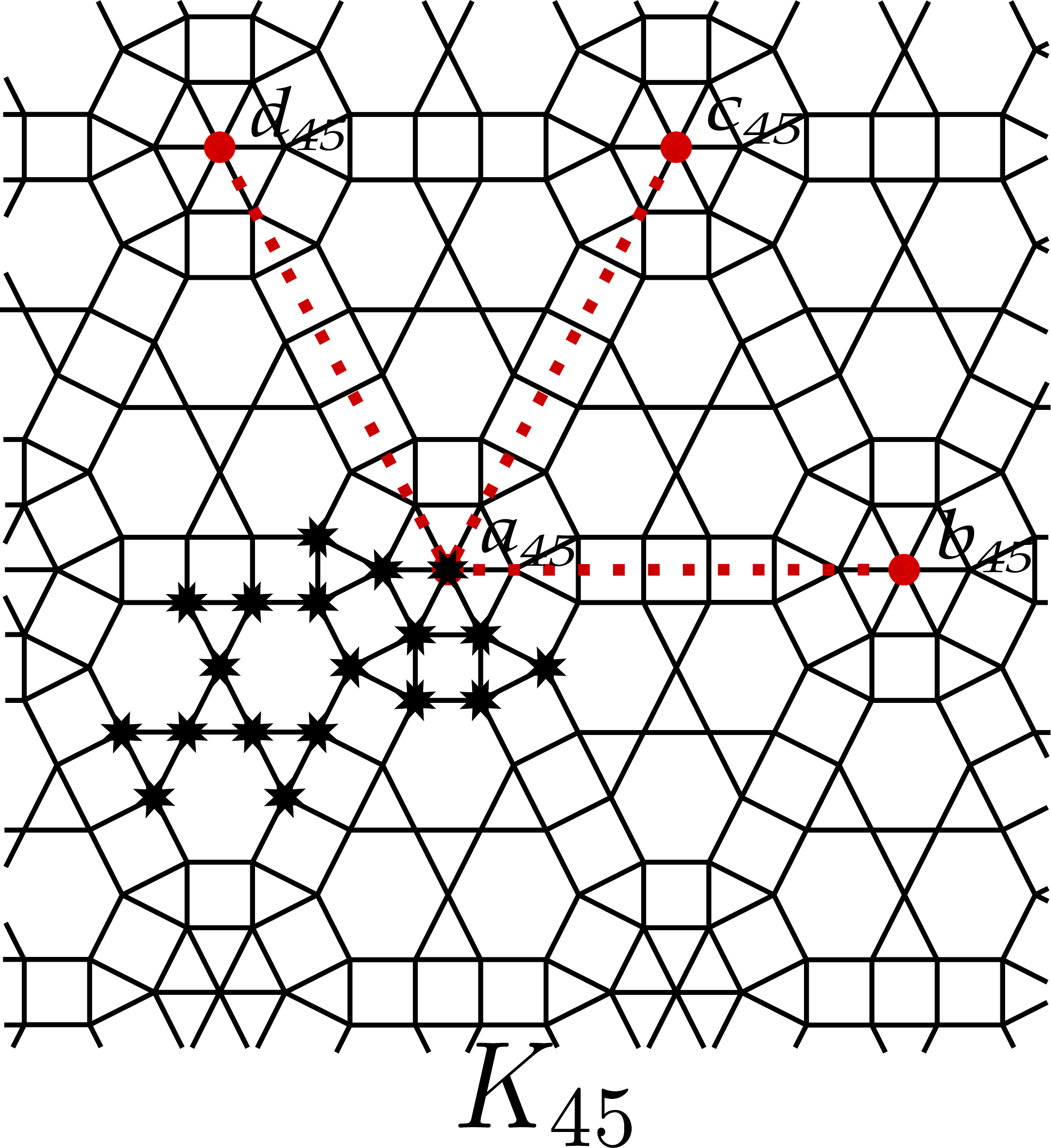}
    \end{figure}    
     \vspace{-7mm}
\begin{figure}[H]
    \centering
    \includegraphics[height=2.9cm, width= 2.9cm]{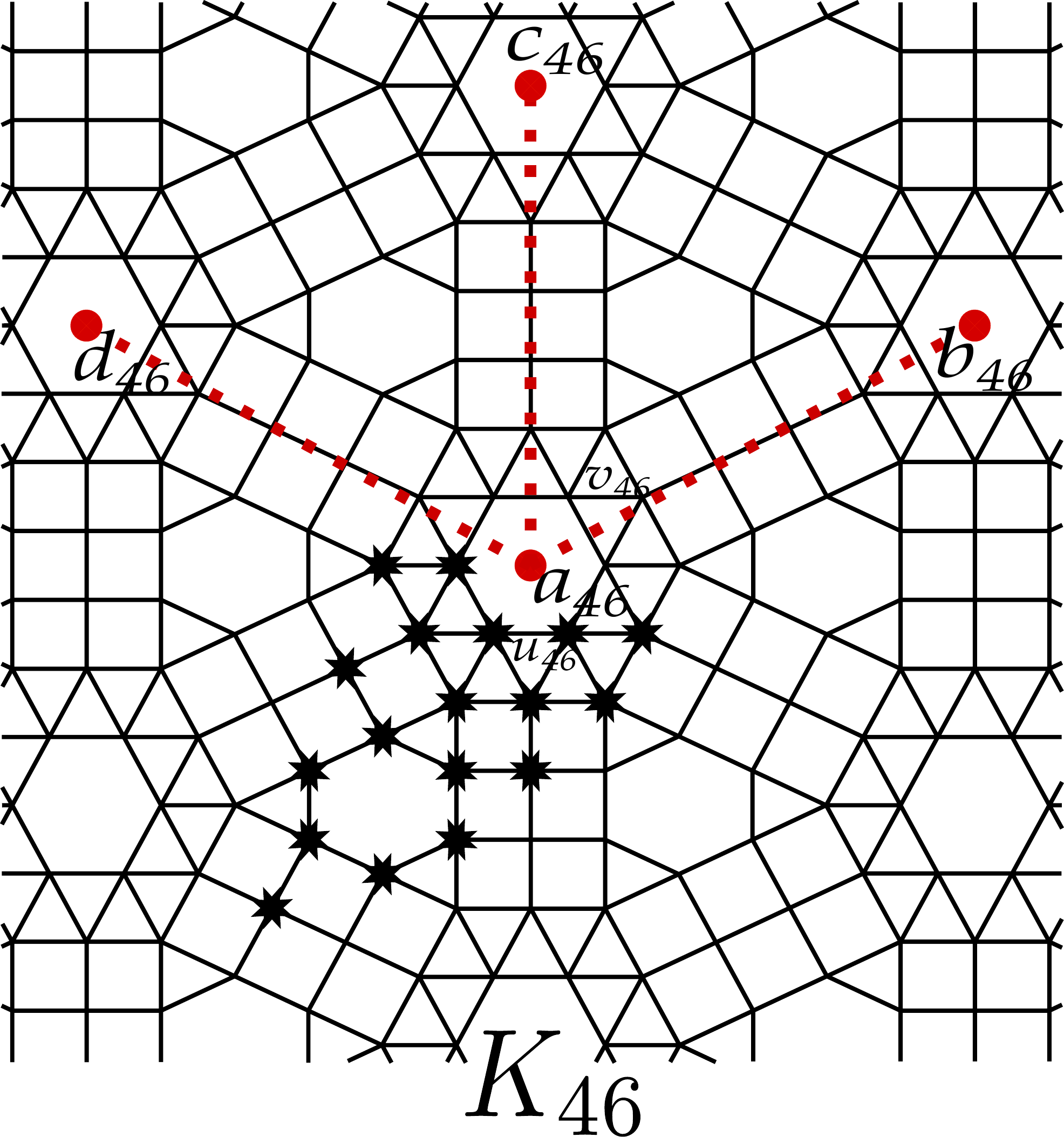}
    \includegraphics[height=2.9cm, width= 2.9cm]{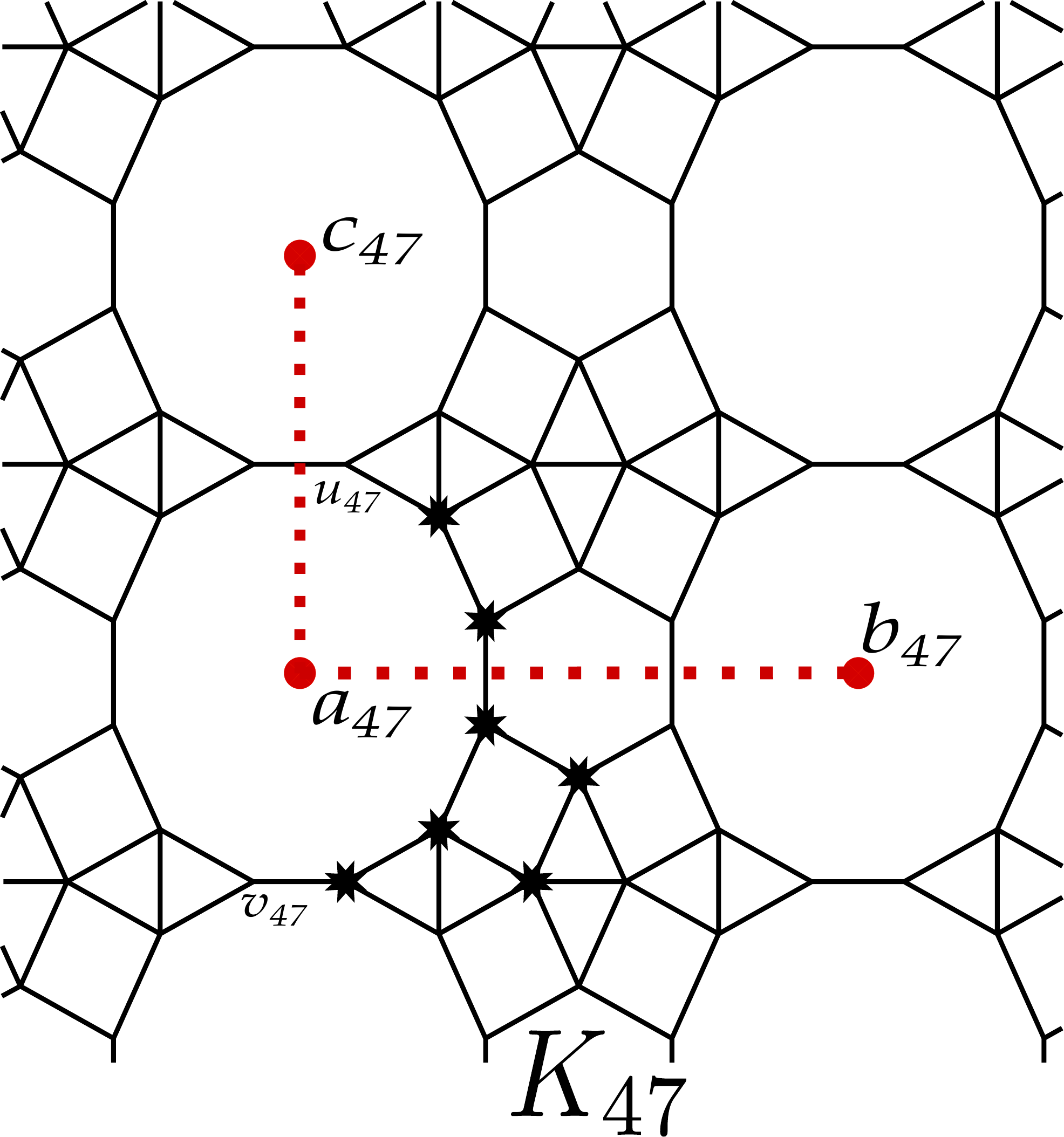}
    \includegraphics[height=2.9cm, width= 2.9cm]{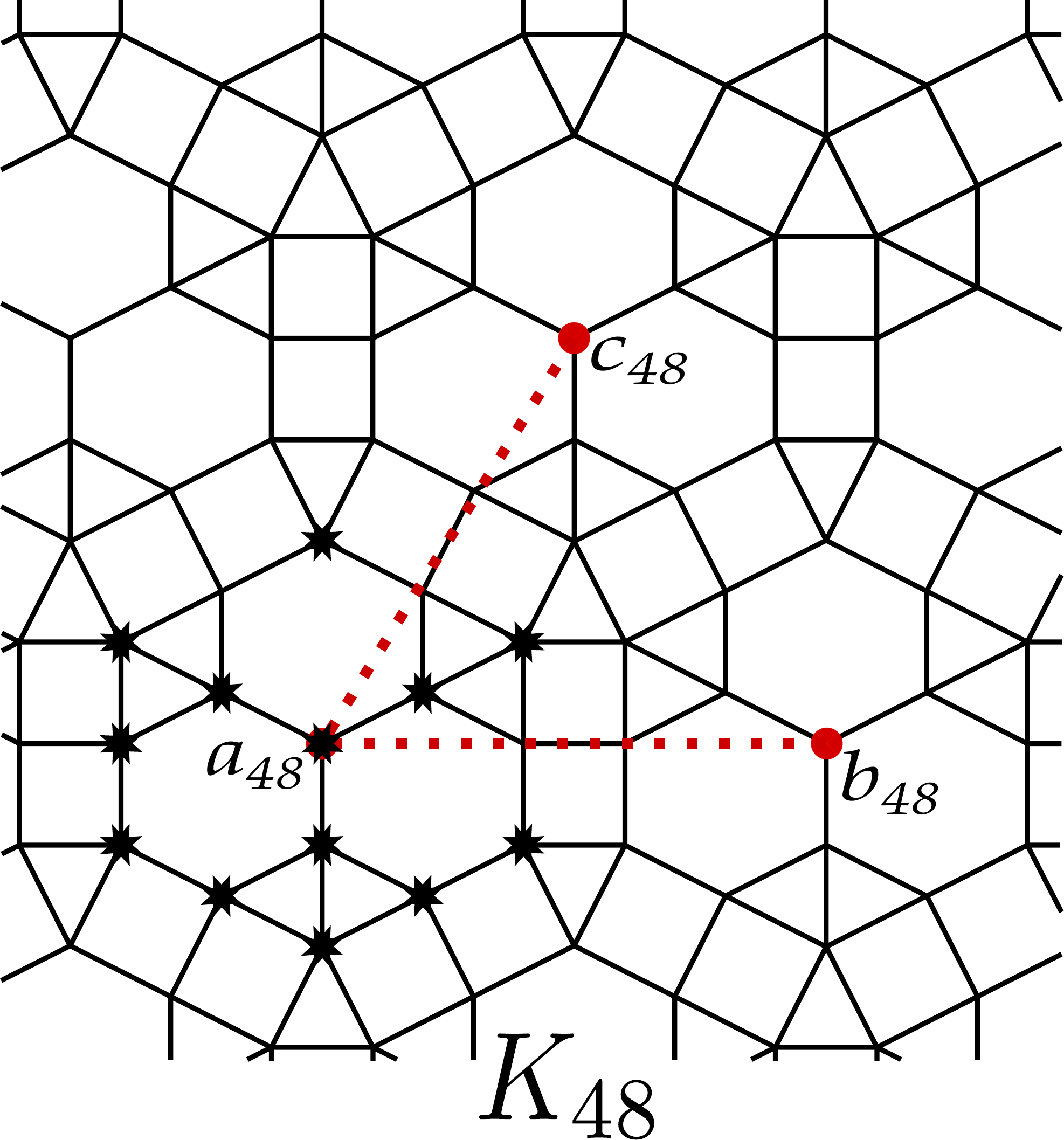}
    \includegraphics[height=2.9cm, width= 2.9cm]{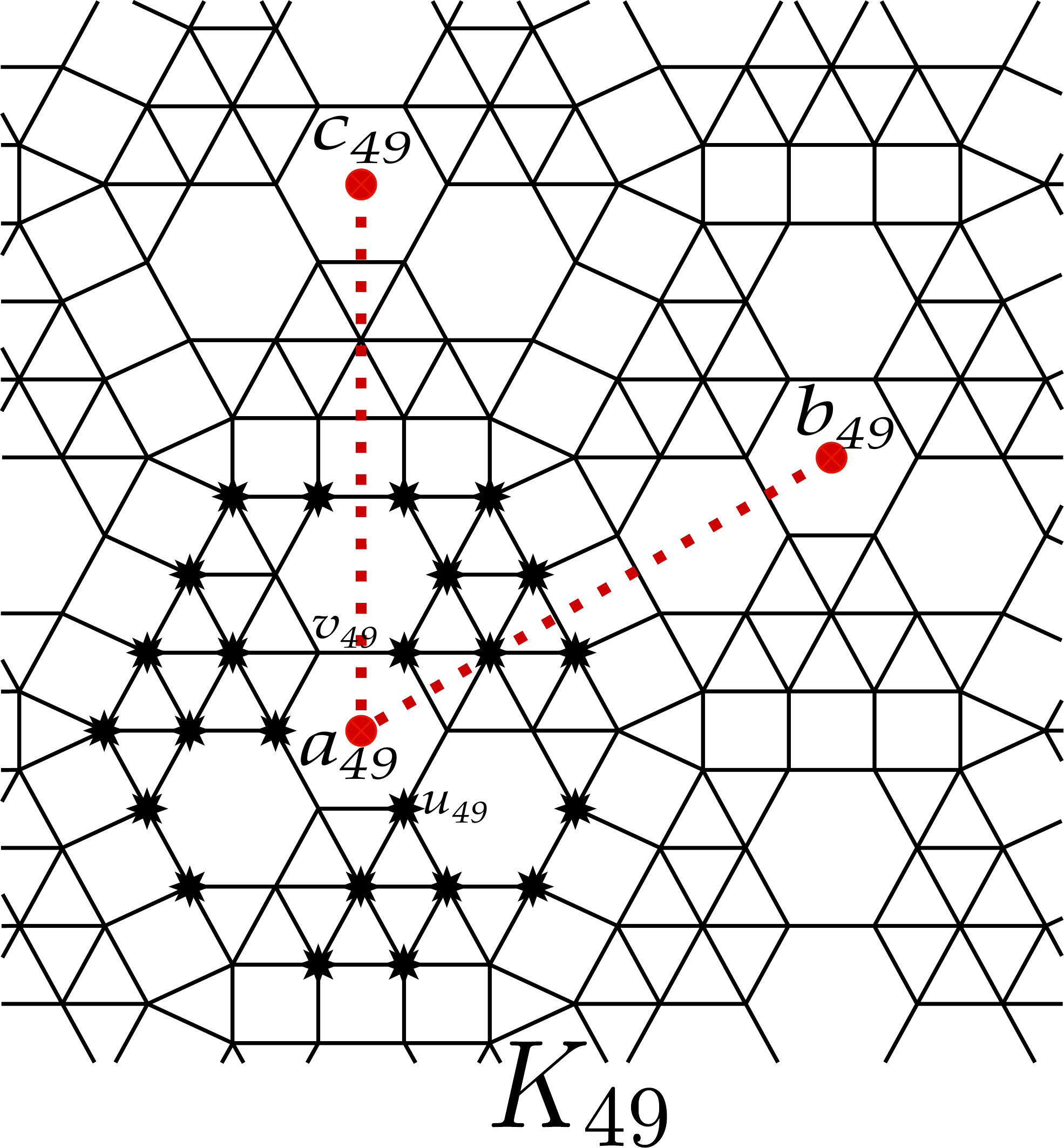}
    \includegraphics[height=2.9cm, width= 2.9cm]{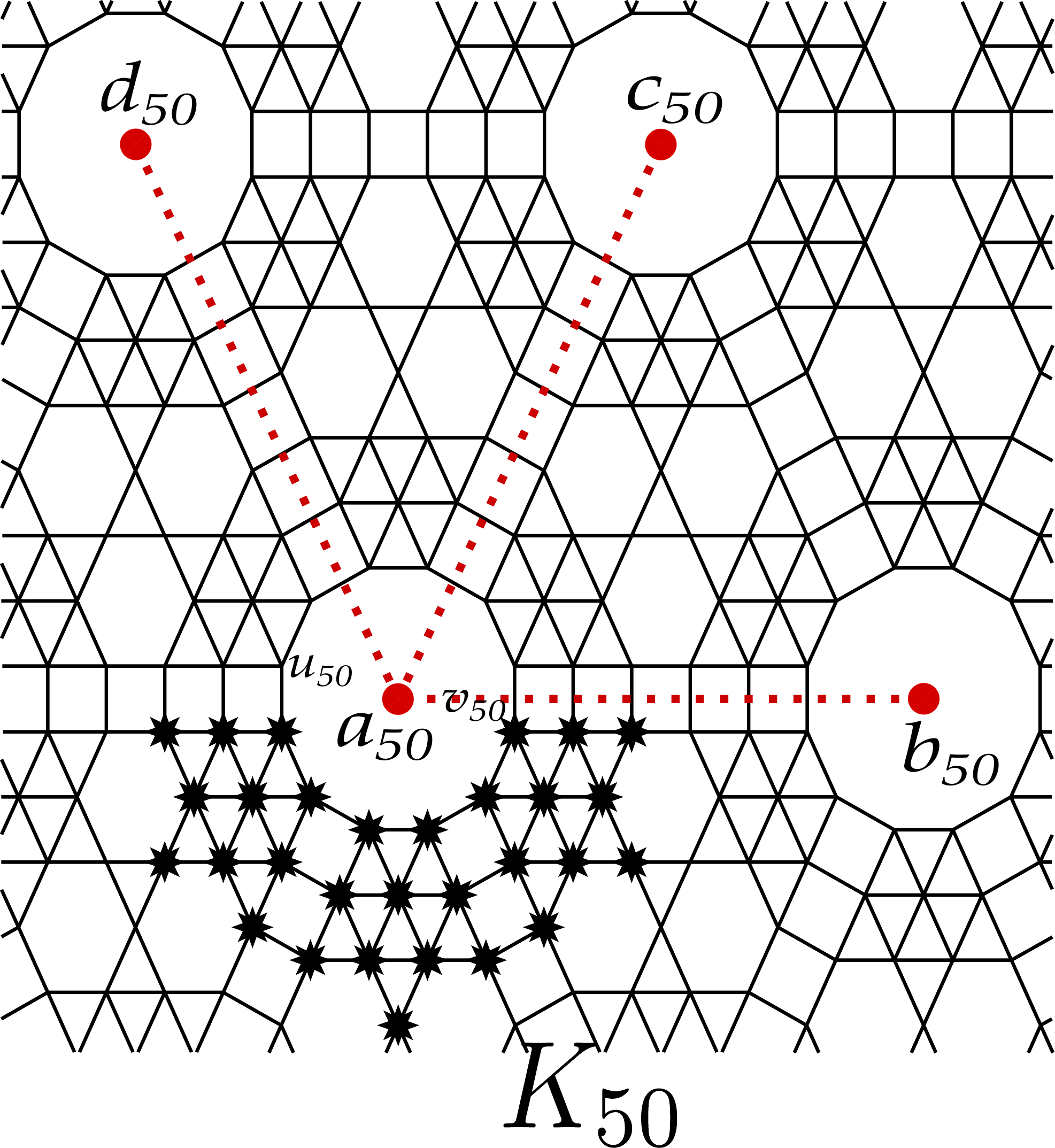}
    \end{figure}
     \vspace{-7mm}
\begin{figure}[H]
    \centering
    \includegraphics[height=2.9cm, width= 2.9cm]{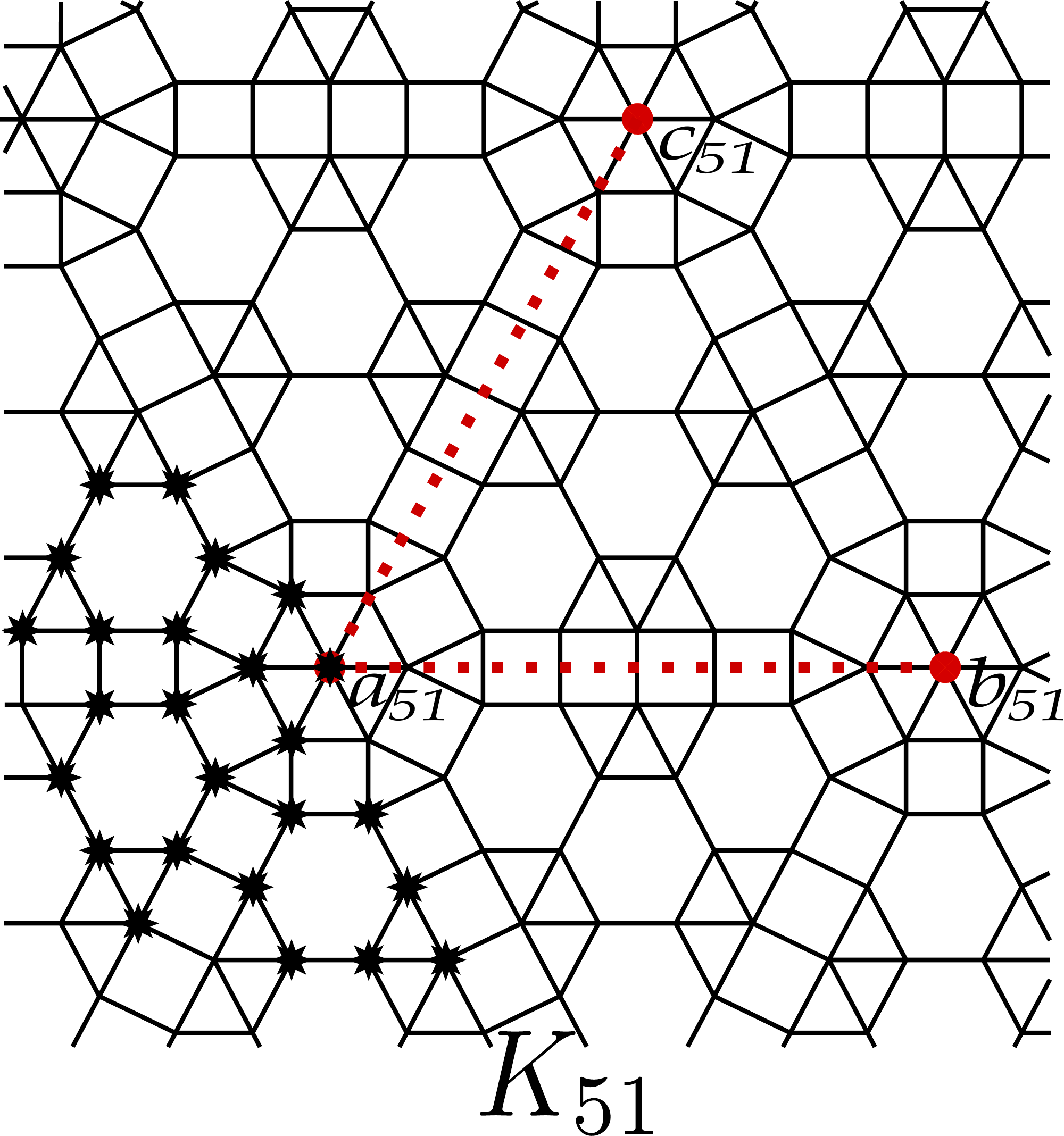}
    \includegraphics[height=2.9cm, width= 2.9cm]{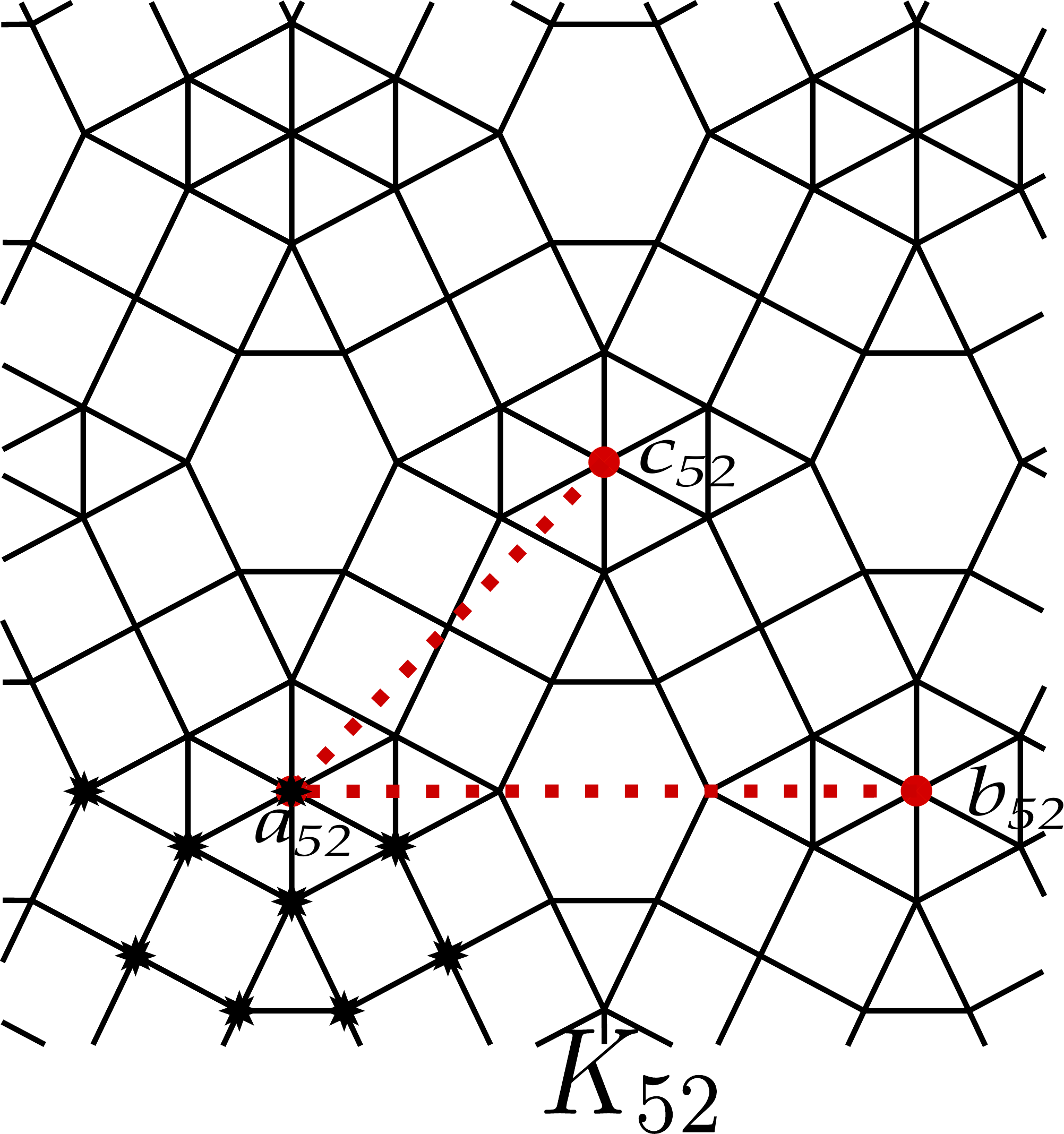}
    \includegraphics[height=2.9cm, width= 2.9cm]{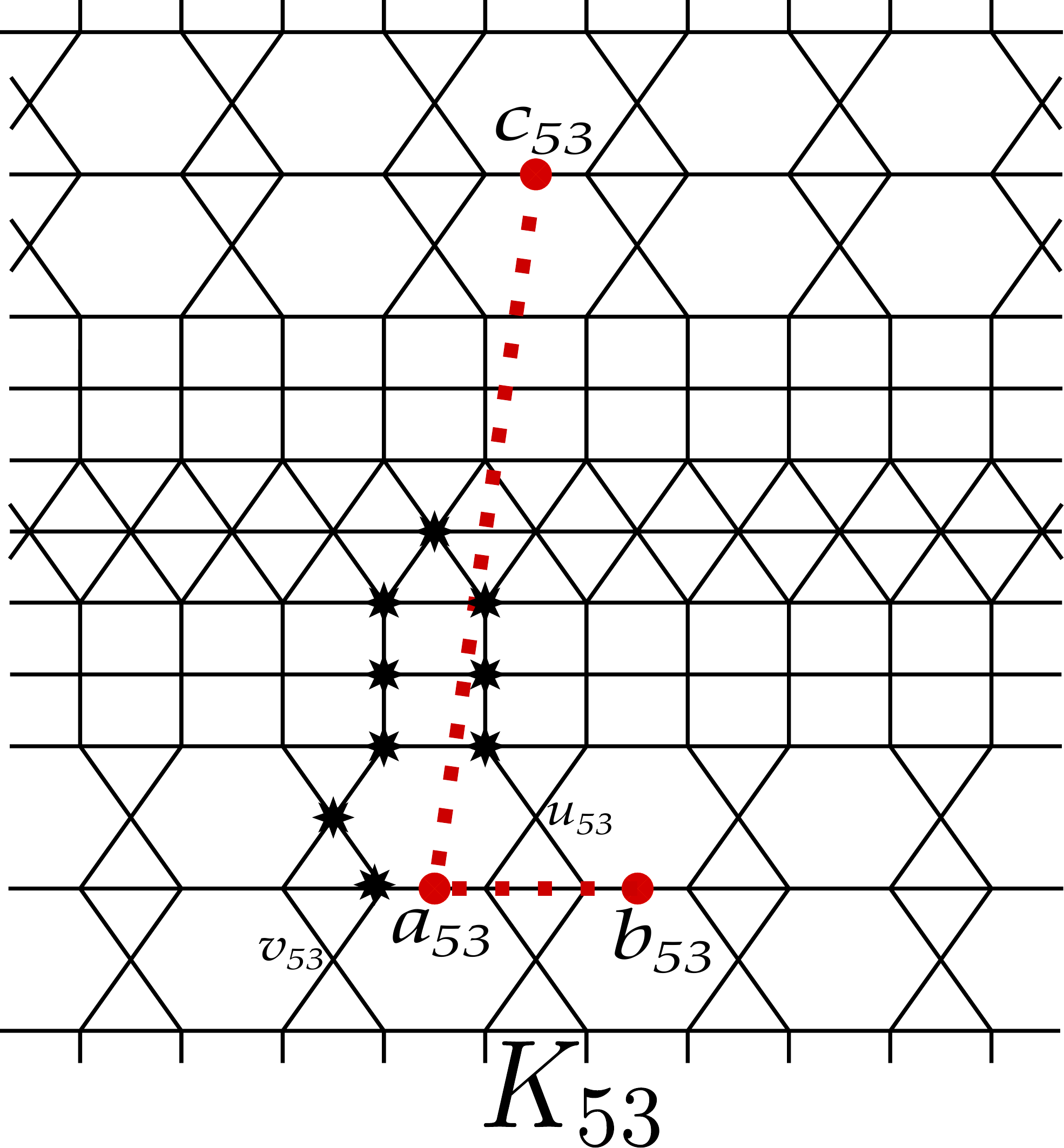}
    \includegraphics[height=2.9cm, width= 2.9cm]{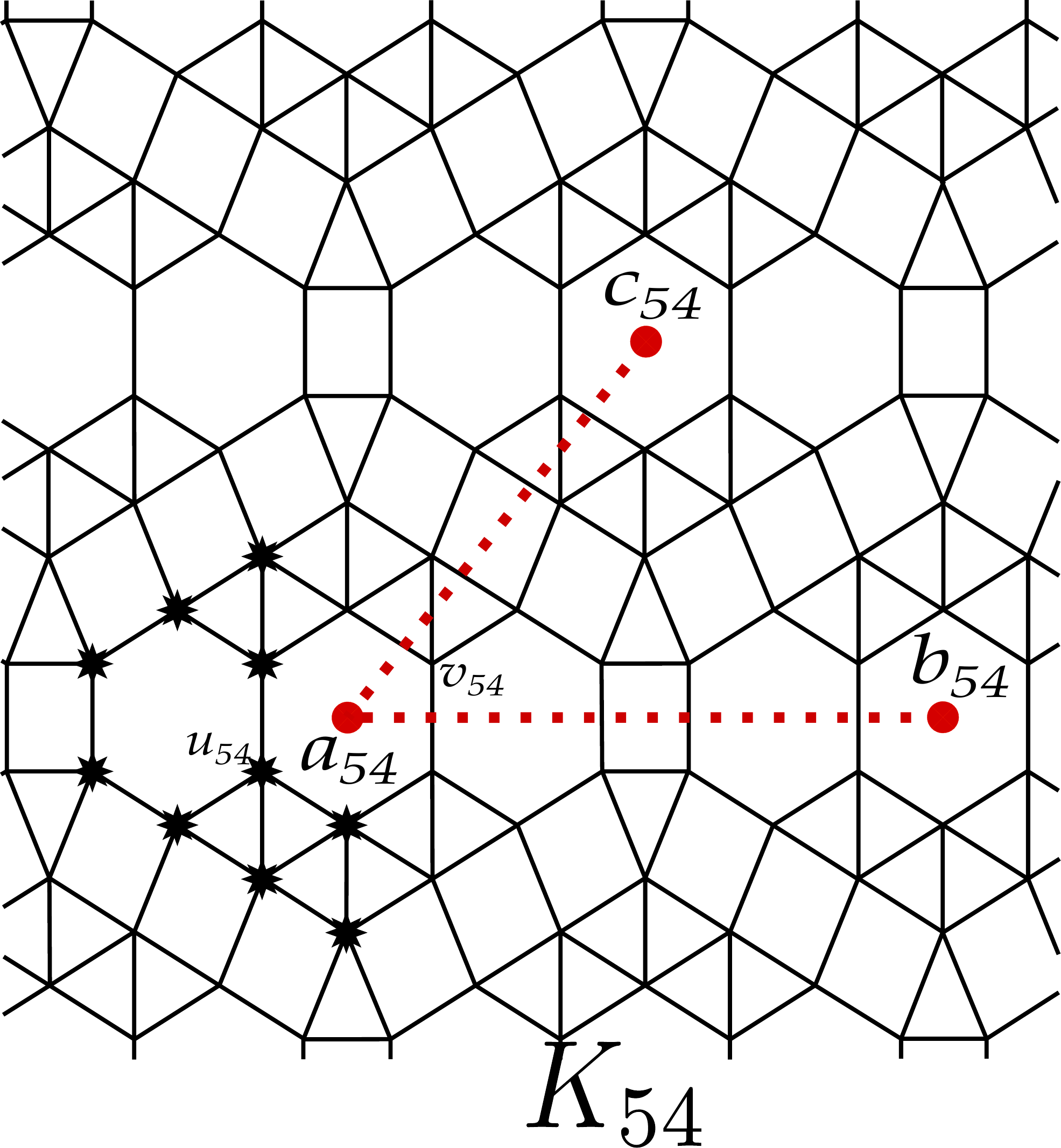}
    \includegraphics[height=2.9cm, width= 2.9cm]{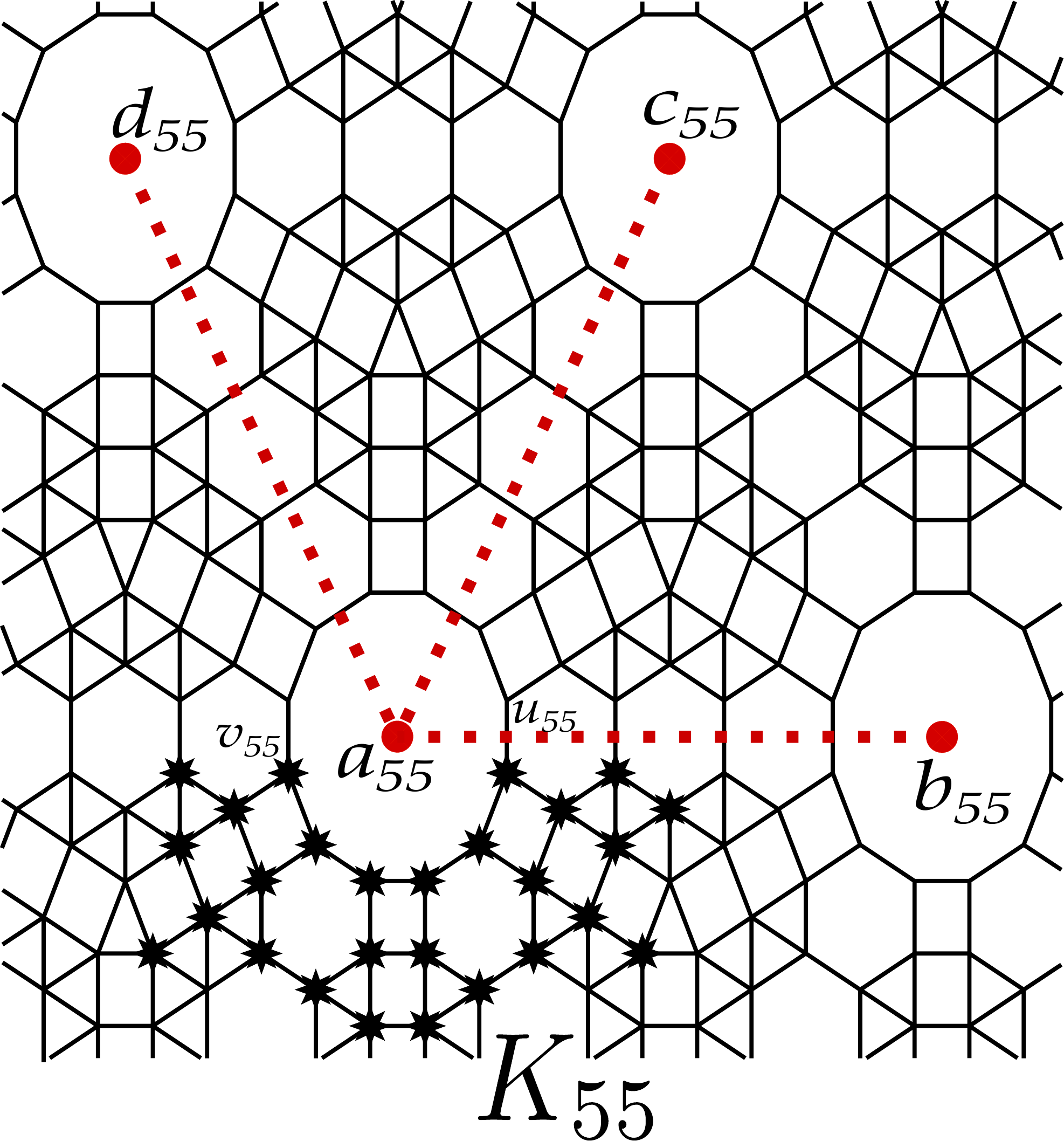}
    \end{figure}   
     \vspace{-7mm}
\begin{figure}[H]
    \centering
    \includegraphics[height=2.9cm, width= 2.9cm]{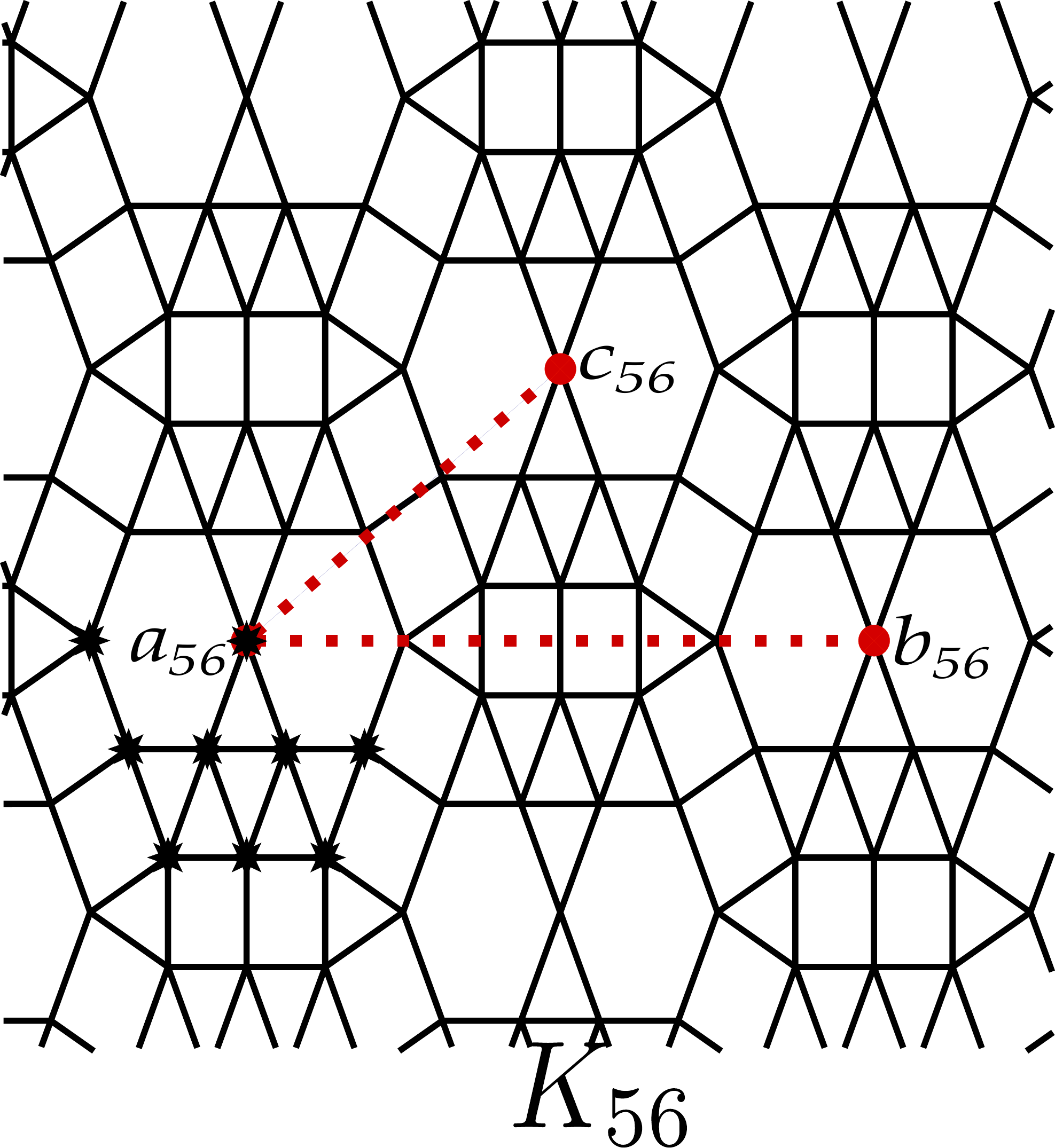}
    \includegraphics[height=2.9cm, width= 2.9cm]{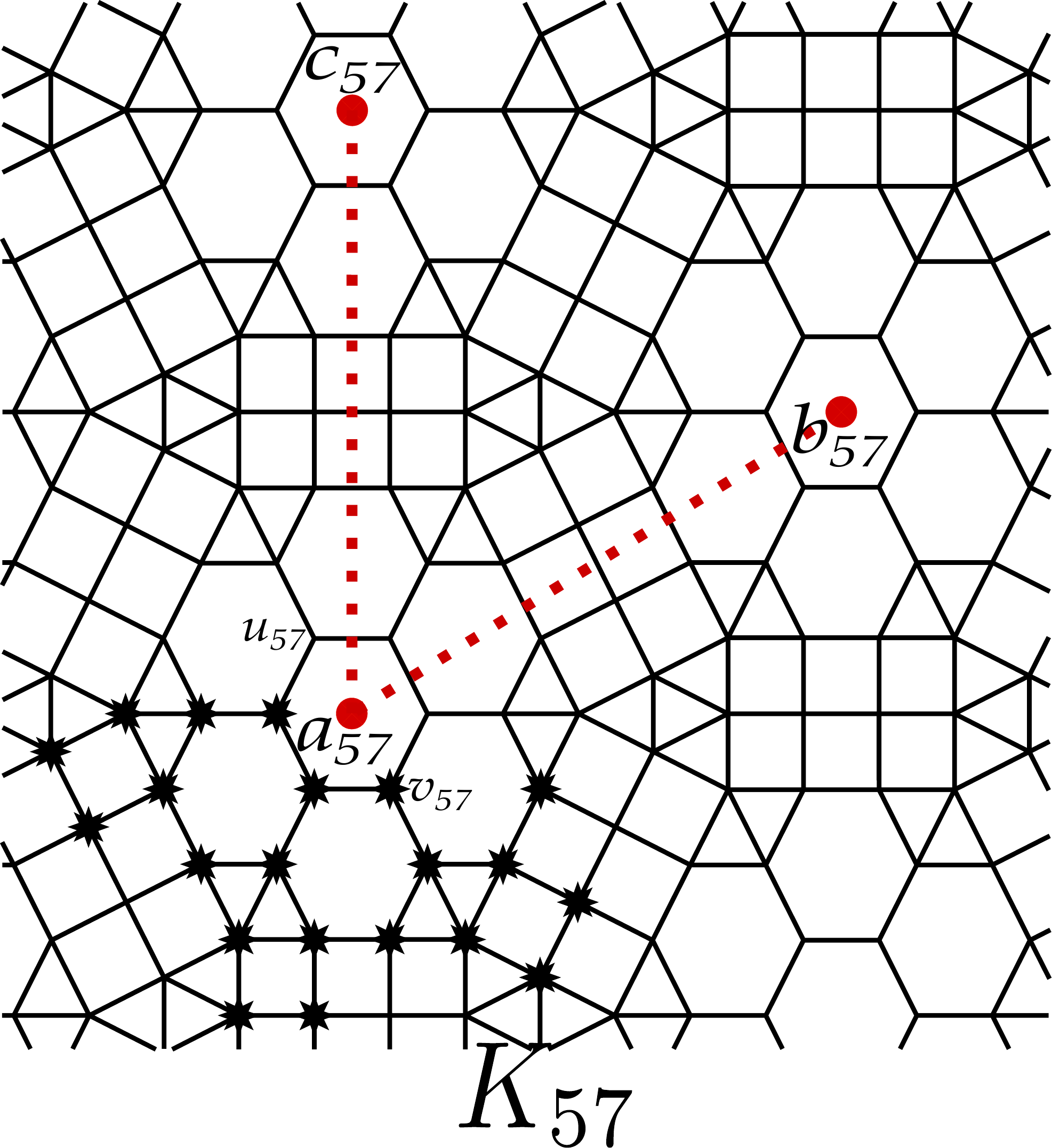}
    \includegraphics[height=2.9cm, width= 2.9cm]{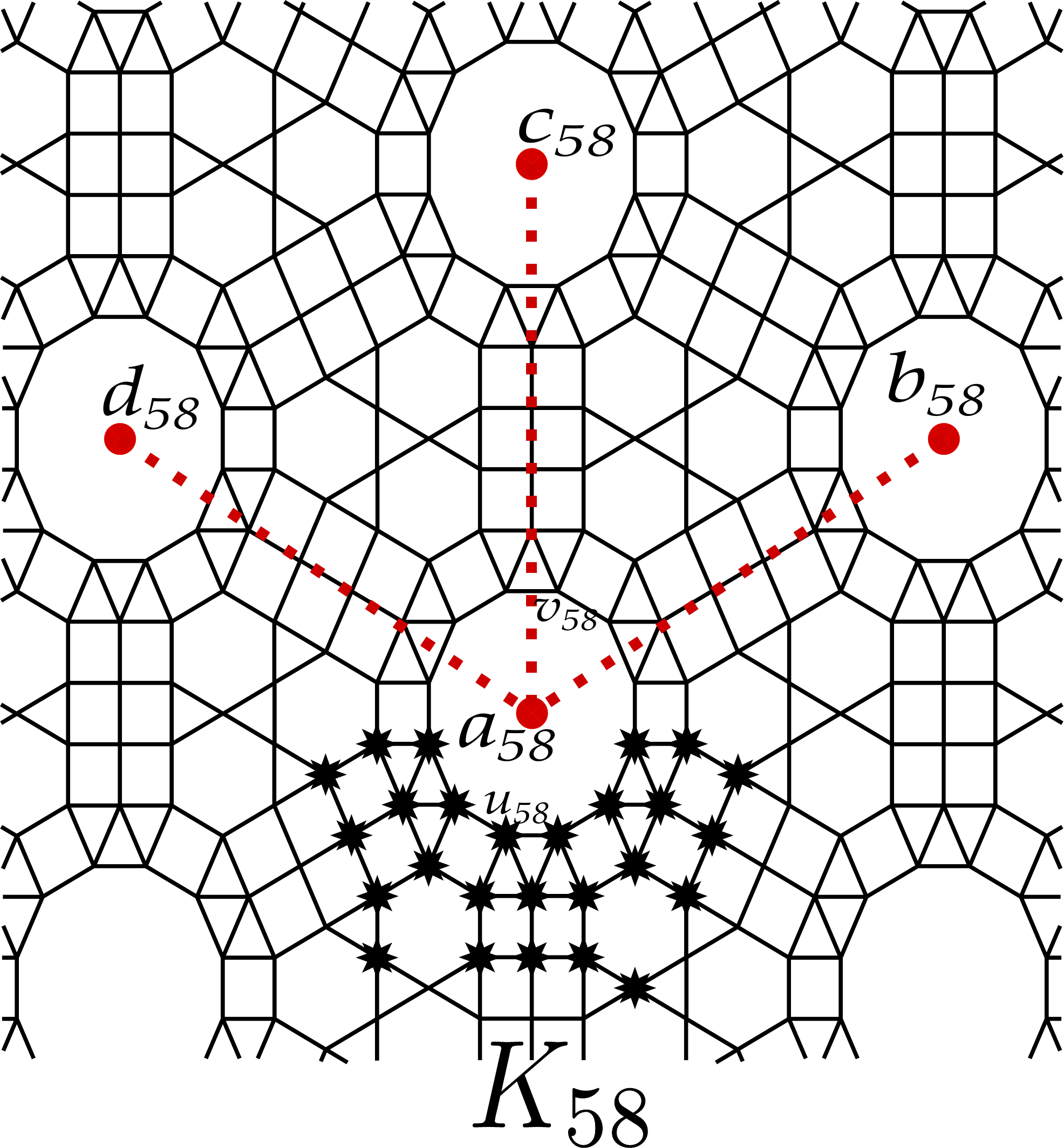}
    \includegraphics[height=2.9cm, width= 2.9cm]{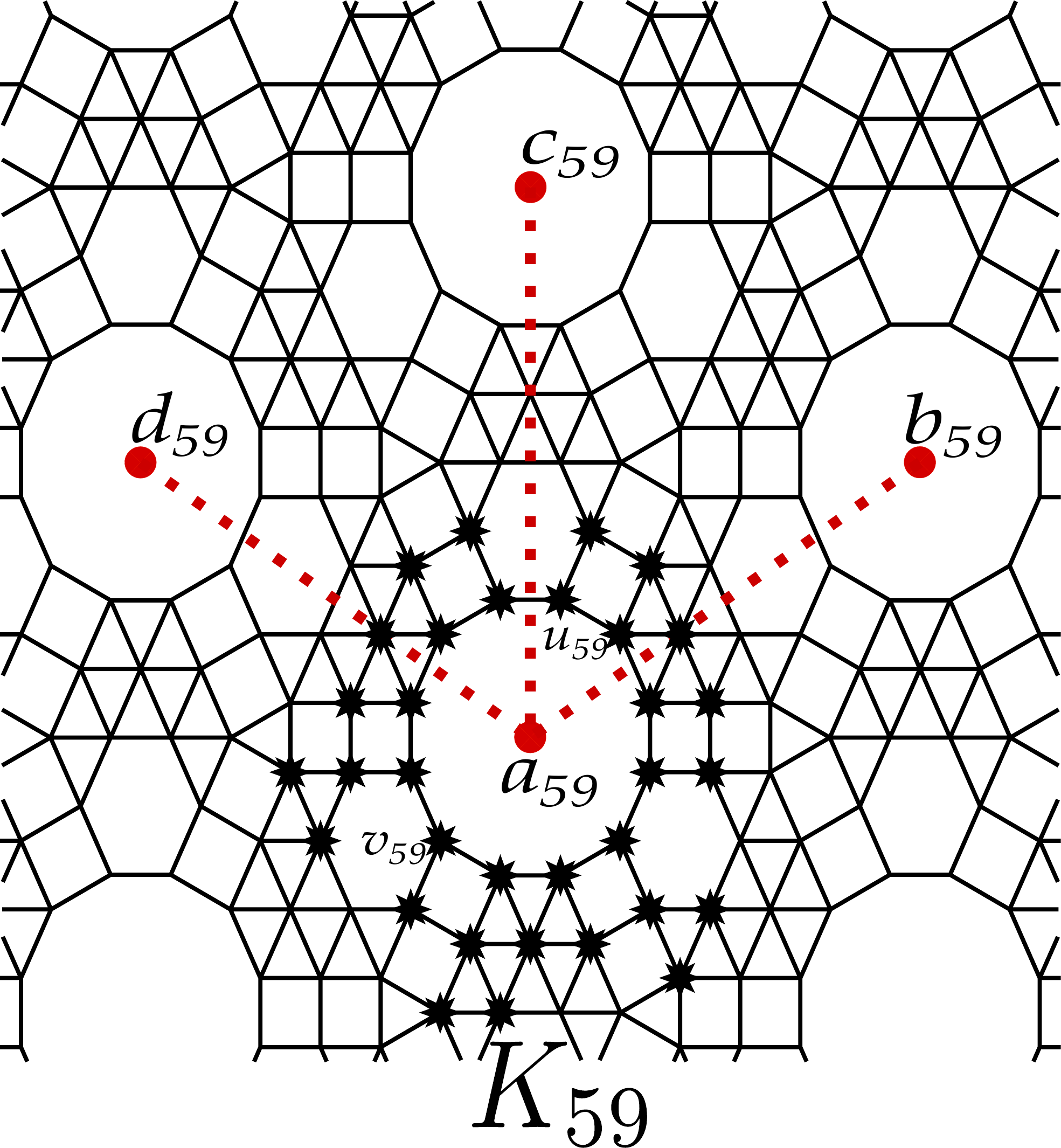}
    \includegraphics[height=2.9cm, width= 2.9cm]{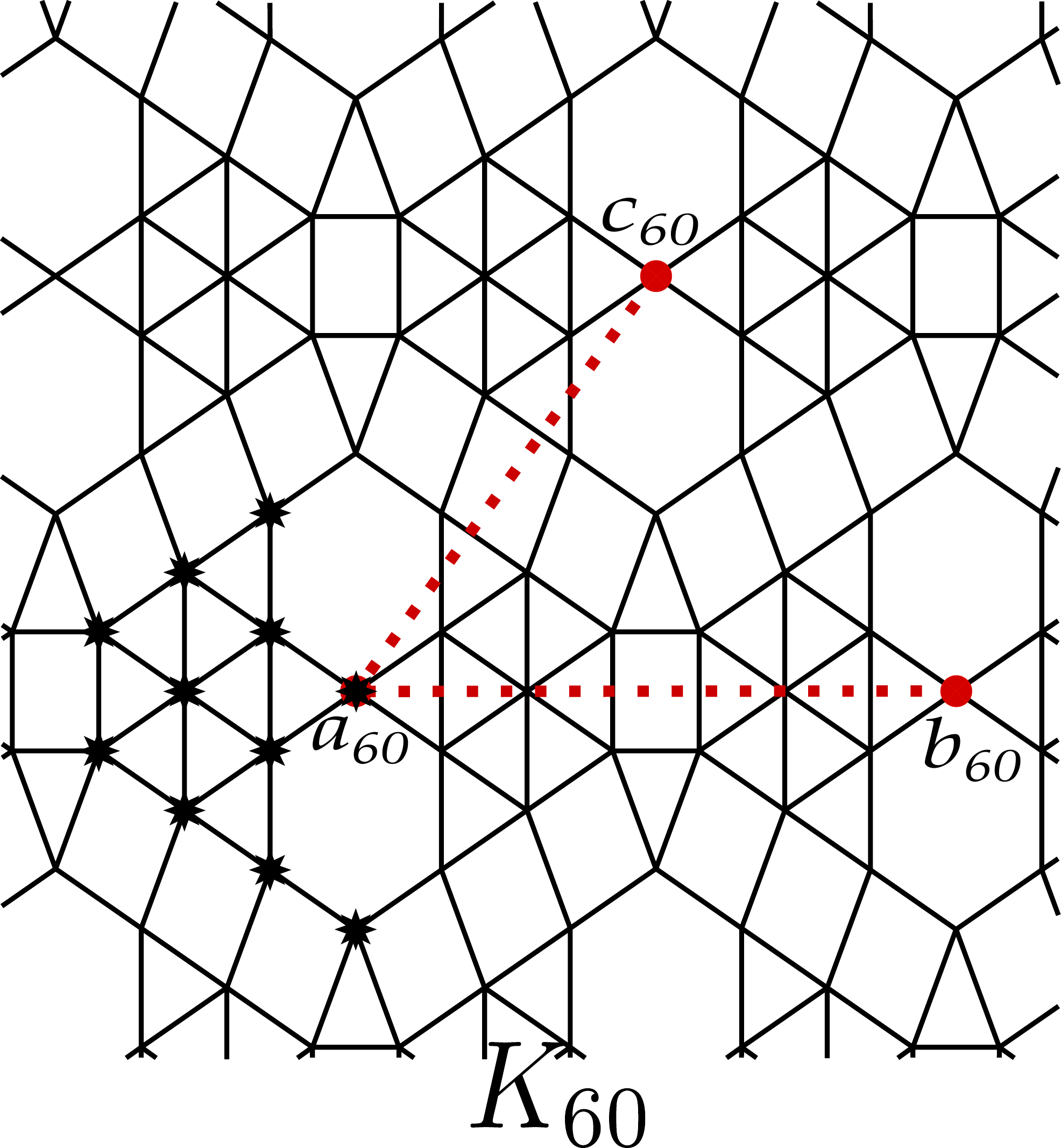}
    \end{figure}
     \vspace{-7mm}
\begin{figure}[H]
    \centering
    \includegraphics[height=2.9cm, width= 2.9cm]{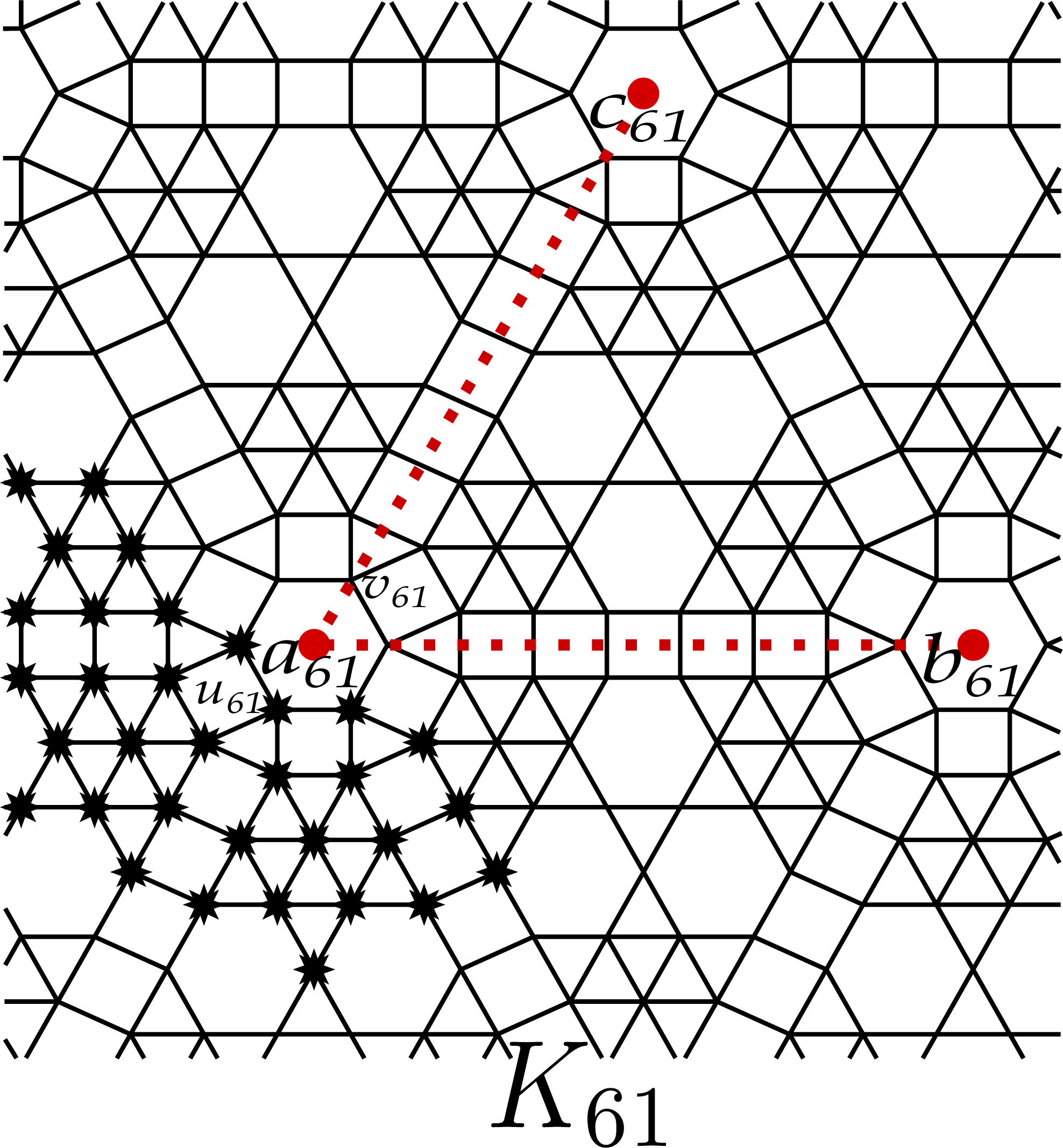}
    \includegraphics[height=2.9cm, width= 2.9cm]{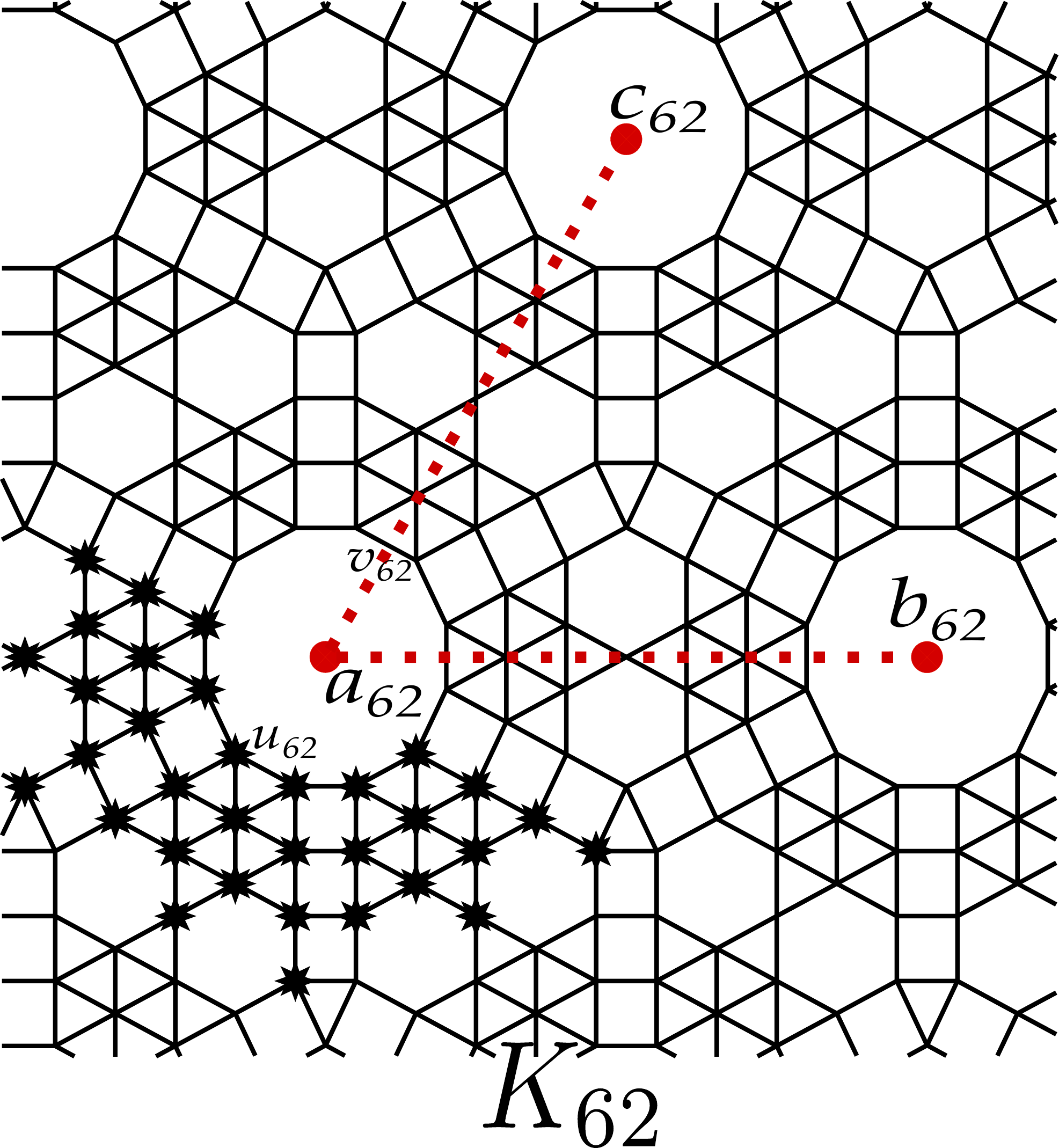}
    \includegraphics[height=2.9cm, width= 2.9cm]{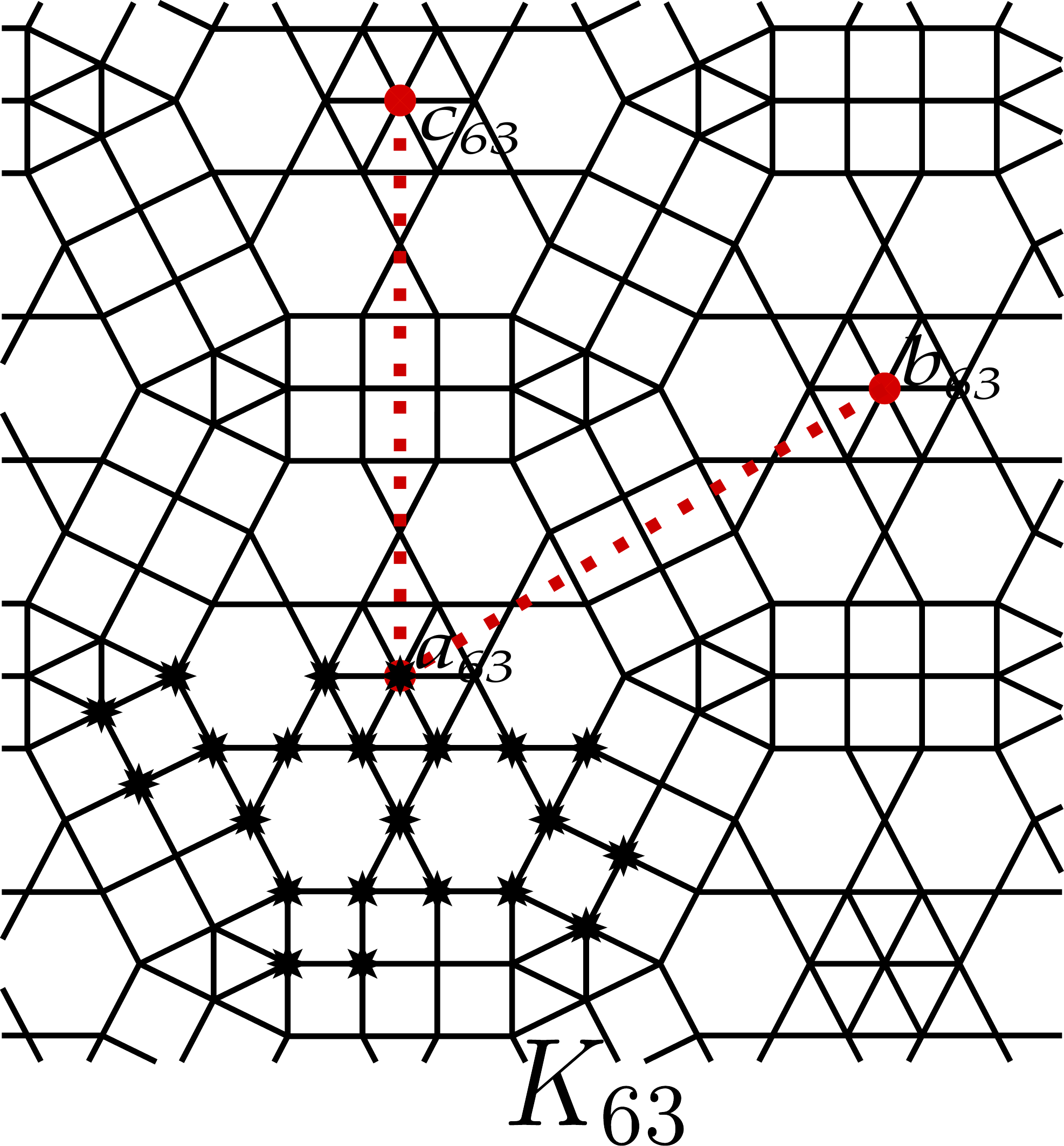}
    \includegraphics[height=2.9cm, width= 2.9cm]{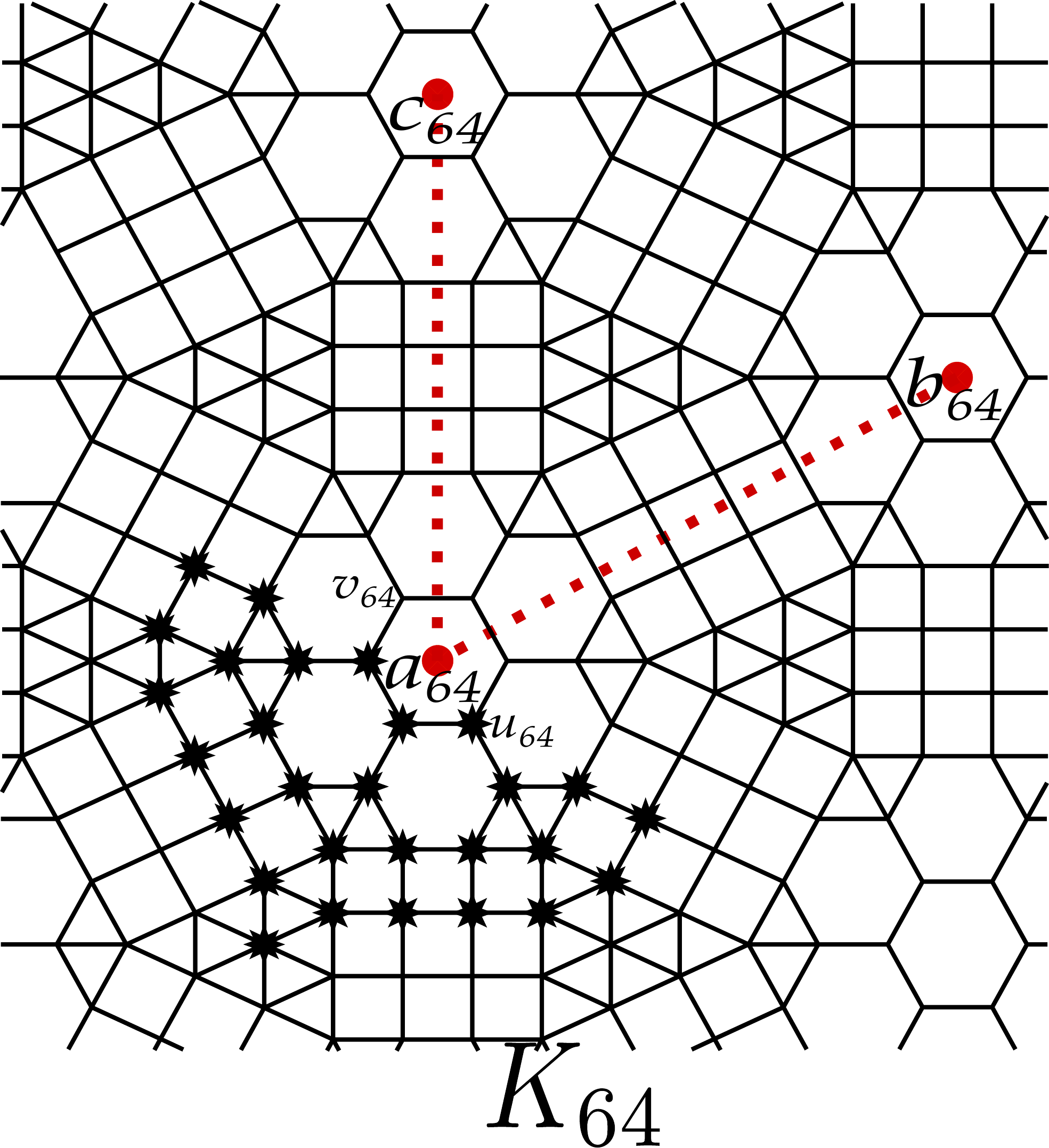}
    \includegraphics[height=2.9cm, width= 2.9cm]{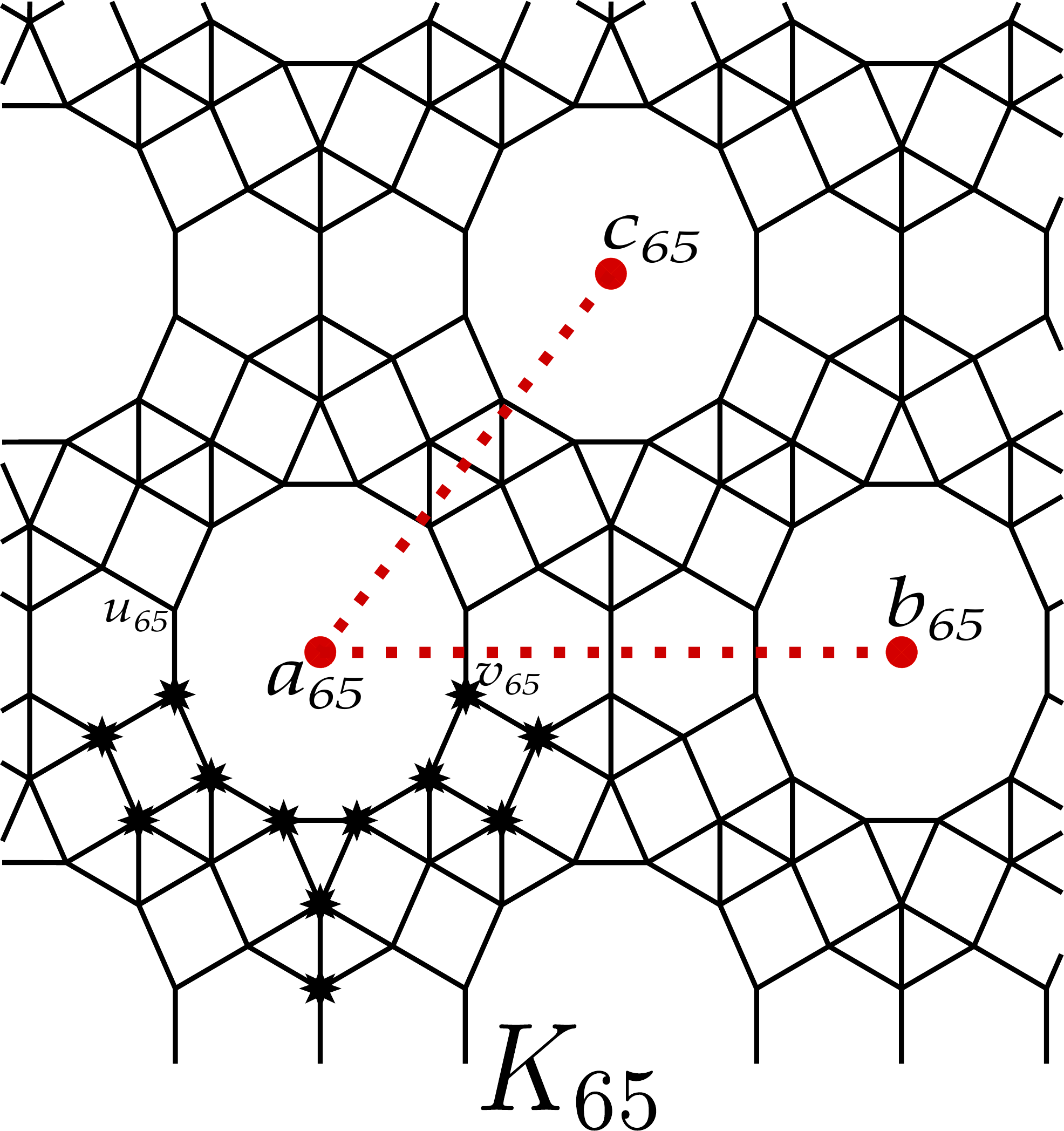}
    \end{figure}

\section{Proof} \label{proof-1}

Gr\"{u}nbaum and G. C. Shephard \cite{GS1977, GS1981} and Kr\"{o}tenheerdt \cite{Otto1977} have discussed the existence and uniqueness of the $k$-vertex-homogeneous lattice ($k \ge 4$) $K_i$, $i =1, 2, \dots, 65$ of the plane. Thus, we have the following. 

\begin{proposition}\label{prop1}
$K_{i}$ ($1 \le i \le 65$) (in Example \ref{exam:plane1}) are unique up to isomorphism. 
\end{proposition}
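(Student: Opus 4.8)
The plan is to deduce the statement directly from the classification of homogeneous (``Archimedean type'') tilings by regular polygons carried out by Kr\"otenheerdt \cite{Otto1977} and by Gr\"unbaum and Shephard \cite{GS1977, GS1981}. Recall that an edge-to-edge tiling of the plane by regular polygons is determined up to similarity by its combinatorial type, and that the cited works prove that for each $k$ there are only finitely many $k$-vertex-homogeneous tilings, namely $11, 20, 39, 33, 15, 10, 7$ of them for $k = 1, \dots, 7$ respectively, and none for $k \ge 8$. Thus the $65$ tilings $K_1, \dots, K_{65}$ of Example \ref{exam:plane1} ($33$ with $k = 4$, $15$ with $k = 5$, $10$ with $k = 6$, $7$ with $k = 7$) are exactly those of the classification for $k \in \{4,5,6,7\}$, and in particular pairwise non-isomorphic. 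So the proposition is a restatement, for this range of $k$, of the uniqueness half of those classification theorems; the task is to connect the explicit drawings in Example \ref{exam:plane1} to the published lists.

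First I would recall, following those references, the skeleton of the enumeration, so that the reader sees why the list is complete and irredundant. One lists the finitely many admissible \emph{vertex species}, i.e.\ the cyclic sequences of regular polygons that can surround a single vertex in an edge-to-edge tiling (there are $21$ such species, $3^6$, $3^4\!\cdot 6$, $3\cdot4\cdot6\cdot4$, $4^4$, $3\cdot12^2$, $4\cdot6\cdot12$, and so on). One then determines which multisets of vertex species can coexist, subject to the local compatibility of the two vertices on each edge and to a counting (Diophantine) constraint coming from the average number of tiles of each size per vertex; this yields a finite list of candidate adjacency patterns. For each candidate one checks whether it extends to a tiling of the whole plane and, if so, counts the symmetry orbits of vertices of the (essentially unique) extension; retaining those with exactly $k$ orbits for $4 \le k \le 7$ and discarding repetitions produces the $65$ types.

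The point that actually has to be checked in the present paper is that the drawings $K_1, \dots, K_{65}$ are correctly labelled: each picture realizes one of the abstract types above, distinct pictures realize distinct types, and the stated value of $k$ (encoded by the range of the index, per Example \ref{exam:plane1}) is the correct orbit count. I would verify this by extracting from each $K_i$ its \emph{orbit signature} --- the multiset recording, for each vertex species occurring in $K_i$, how many $\mathrm{Aut}$-orbits of that species appear --- and matching it against the tables of \cite{Otto1977, GS1977, GS1981}. Two tilings with different orbit signatures are non-isomorphic, and the classification guarantees at most one tiling per admissible signature, so this matching simultaneously gives existence, completeness, and pairwise non-isomorphism, hence the proposition.

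The main obstacle is not conceptual --- the hard enumeration is quoted wholesale --- but is the bookkeeping of pairing the $65$ figures with the entries of the published tables without omitting or duplicating a type; I would flag the $k = 4$ case, with its $33$ tilings, as the most error-prone part of that verification, and would carry it out by sorting the $K_i$ first by the set of vertex species present and only then by the finer orbit count.
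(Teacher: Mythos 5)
Your proposal takes essentially the same route as the paper: Proposition \ref{prop1} is justified there by a one-sentence appeal to the existence and uniqueness results of Kr\"otenheerdt and of Gr\"unbaum--Shephard, exactly the classification you invoke. Your additional sketch of the enumeration and of matching the $65$ drawings to the published tables via orbit signatures is more detailed than anything the paper provides, but the underlying argument is identical.
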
 

\begin{proof}[Proof of Theorem \ref{theo1}]
Let $X_1$ be a map on the torus that is the quotient of the plane's $4$-uniform lattice $K_1$, where $K_1$ is as shown in Section \ref{fig:kuniform}. Let the vertices of $X_1$ form $m_1$ ${\rm Aut}(X_1)$-orbits and $V_{1} = V(K_1)$ be the vertex set of $K_1$. Let $H_{1}$ be the group of all the translations of $K_1$. So, $H_1 \leq {\rm Aut}(K_1)$.

Since $X_1$ is a map on the torus that is the quotient of the plane's $4$-uniform lattice $K_1$ (as by Proposition \ref{prop1}, $K_1$ is unique), so, we can assume that there exists a polyhedral covering map $\eta_{1} : K_1 \to X_1$, where $X_1 = K_1/\Gamma_{1}$  for some fixed element (vertex, edge or face) free subgroup $\Gamma_{1} \le {\rm Aut}(K_1)$. Hence, $\Gamma_{1}$
consists of translations and glide reflections. Since $X_1 =
K_1/\Gamma_{1}$ is orientable, $\Gamma_{1}$ does not contain any glide reflection. Thus, $\Gamma_{1} \leq H_{1}$.

 We take the middle point of the line segment joining vertices $B_{0}$ and $B_{1}$ as the origin $(0,0)$ of $K_1$. Let $\alpha_1 := B_2 - B_{0}$ and $\beta_1 := B_{2}- B_{0}$ in $K_1$. Then $$H_1 := \langle x\mapsto x+\alpha_1, x\mapsto x+\beta_1\rangle.$$ Under the action of $H_1$, vertices of $K_1$ form eleven orbits.
Consider the subgroup $G_1$ of ${\rm Aut}(K_1)$ generated by $H_1$ and the map (the half rotation) $x\mapsto -x$. So,
\begin{align*}
  G_1 & =\{ \alpha : x\mapsto \varepsilon x + m\alpha_1 + n\beta_1 \, : \, \varepsilon=\pm 1, m, n\in \ZZ\} \cong H_1\rtimes \mathbb{Z}_2.
\end{align*}
Clearly, under the action of $G_1$, vertices of $K_1$ form six orbits. The orbits are 
$
 O_1 :=\langle A_{0} \rangle, O_2 :=\langle B_0 \rangle, O_3 :=\langle C_{0} \rangle, O_4 :=\langle C_1 \rangle, O_5 :=\langle D_0 \rangle, O_6 :=\langle D_1 \rangle.
$

\begin{claim}\label{claim1}
If $S \leq H_1$ then $S \unlhd G_1$.
\end{claim} 

\smallskip

Let $g \in G_1$ and $s\in S$. Then $g(x) = \varepsilon x+ma+nb$ and $s(x) = x + pa+ qb$ for some $m, n, p, q \in \mathbb{Z}$ and $\varepsilon\in\{1, -1\}$.
Therefore, 
\begin{align*}
(g\circ s\circ g^{-1})(x) & = (g\circ s)(\varepsilon(x-ma-nb))\\
                          & = g(\varepsilon(x-ma-nb-rc)+pa+qb)\\                          
                          & =x-ma-nb+\varepsilon(pa+qb)+ma+nb\\
                          & = x+\varepsilon(pa+qb)\\
                          & =s s^{\varepsilon}(x).
\end{align*}
 Thus, $g\circ s\circ g^{-1} = s^{\varepsilon}\in S$. This completes the claim.

\smallskip

By Claim 1, $\Gamma_1$ is a normal subgroup of $G_1$. Therefore, $G_1/\Gamma_1$ acts on $X_1= K_1/\Gamma_1$.
Since 
$
 O_1 :=\langle A_{0} \rangle, O_2 :=\langle B_0 \rangle, O_3 :=\langle C_{0} \rangle, O_4 :=\langle C_1 \rangle, O_5 :=\langle D_0 \rangle, O_6 :=\langle D_1 \rangle
$
 are the $G_1$-orbits, it follows that $\eta_1(O_j)$ for $j=1, 2, \dots, 6$ are the $(G_1/\Gamma_1)$-orbits. Since the vertex set of $K_1$ is $\sqcup_{j=1}^{6}\eta_1(O_j)$ and $G_1/\Gamma_1 \leq {\rm Aut}(X_1)$, it follows that the number of ${\rm Aut}(X_1)$-orbits of vertices is $\leq 6$, and hence, $m_1 \le 6$. 
 
\medskip

Let $I_1= \{2,4,8,9,14,20,24,26,31,32,33\}$. For $i \in I_1$, let $X_{i} = K_{i}/\Gamma_{i}$ be a map on the torus, for some fixed element (vertex, edge or face) free subgroup $\Gamma_{i} \le {\rm Aut}(K_{i})$. $K_i, i \in I_1$, are given in Section \ref{fig:kuniform}. Let the vertices of $X_{i}$ form $m_{i}$ ${\rm Aut}(X_{i})$-orbits.
In $K_{2}$, we take the middle point of the line segment joining vertices $u_{2}$ and $v_{2}$ as the origin $(0,0)$ and $\alpha_{2} := b_{2} - a_{2}$, $\beta_{2} := c_{2} - a_{2}$ $\in \mathbb{R}^2$.
In $K_{4}$, we take the middle point of the line segment joining vertices $u_{4}$ and $v_{4}$ as the origin $(0,0)$ and $\alpha_{4} := b_{4} - a_{4}$, $\beta_{4} := c_{4} - a_{4}$ $\in \mathbb{R}^2$.
In $K_{8}$, we take the middle point of the line segment joining vertices $u_{8}$ and $v_{8}$ as the origin $(0,0)$ and $\alpha_{8} := b_{8} - a_{8}$, $\beta_{8} := c_{8} - a_{8}$ $\in \mathbb{R}^2$.
In $K_{9}$, we take the middle point of the line segment joining vertices $u_{9}$ and $v_{9}$ as the origin $(0,0)$ and $\alpha_{9} := b_{9} - a_{9}$, $\beta_{9} := c_{9} - a_{9}$ $\in \mathbb{R}^2$.
In $K_{14}$, we take the middle point of the line segment joining vertices $u_{14}$ and $v_{14}$ as the origin $(0,0)$ and $\alpha_{14} := b_{14} - a_{14}$, $\beta_{14} := c_{14} - a_{14}$ $\in \mathbb{R}^2$.
In $K_{20}$, we take the middle point of the line segment joining vertices $u_{20}$ and $v_{20}$ as the origin $(0,0)$ and $\alpha_{20} := b_{20} - a_{20}$, $\beta_{20} := c_{20} - a_{20}$ $\in \mathbb{R}^2$.
In $K_{24}$, we take the middle point of the line segment joining vertices $u_{24}$ and $v_{24}$ as the origin $(0,0)$ and $\alpha_{24} := b_{24} - a_{24}$, $\beta_{24} := c_{24} - a_{24}$ $\in \mathbb{R}^2$.
In $K_{26}$, we take the middle point of the line segment joining vertices $u_{26}$ and $v_{26}$ as the origin $(0,0)$ and $\alpha_{26} := b_{26} - a_{26}$, $\beta_{26} := c_{26} - a_{26}$ $\in \mathbb{R}^2$.
In $K_{31}$, we take the middle point of the line segment joining vertices $u_{31}$ and $v_{31}$ as the origin $(0,0)$ and $\alpha_{31} := b_{31} - a_{31}$, $\beta_{31} := c_{31} - a_{31}$ $\in \mathbb{R}^2$.
In $K_{32}$, we take the middle point of the line segment joining vertices $u_{32}$ and $v_{32}$ as the origin $(0,0)$ and $\alpha_{32} := b_{32} - a_{32}$, $\beta_{32} := c_{32} - a_{32}$ $\in \mathbb{R}^2$.
In $K_{33}$, we take the middle point of the line segment joining vertices $u_{33}$ and $v_{33}$ as the origin $(0,0)$ and $\alpha_{33} := b_{33} - a_{33}$, $\beta_{33} := c_{33} - a_{33}$ $\in \mathbb{R}^2$.
Similarly as done for $K_1$, we define $H_i$ and $G_i$, for $i \in I_1$. By the same arguments as above and in Claim 1, for $i \in I_1$, $\Gamma_i \unlhd G_i$ and the number of $G_i/\Gamma_i$-orbits of vertices of $X_i$ is six. Hence, $m_{i} \le 6$, for $i \in I_1$.
In $K_i$, the six vertex orbits under $G_i$ are marked by the symbol \textbf{$\ast$} respectively for $i \in I_1$.
This completes the part {\rm (1)}.

\medskip

Let $I_2= \{6,7\}$. For $i \in I_2$, let $X_{i} = K_{i}/\Gamma_{i}$ be a map on the torus, for some fixed element (vertex, edge or face) free subgroup $\Gamma_{i} \le {\rm Aut}(K_{i})$. $K_i, i \in I_2$, are given in Section \ref{fig:kuniform}. Let the vertices of $X_{i}$ form $m_{i}$ ${\rm Aut}(X_{i})$-orbits.
In $K_6$, we take the point $a_{0}$ as the origin $(0,0)$ and $\alpha_6 := b_6 - a_{6}$, $\beta_6 := c_{6} - a_{6}$, $\gamma_6 := d_{6} - a_{6}$ $\in \mathbb{R}^2$.
In $K_{7}$, we take the middle point of the line segment joining vertices $u_{7}$ and $v_{7}$ as the origin $(0,0)$ and $\alpha_{7} := b_7 - a_{7}$, $\beta_{7} := c_{7} - a_{7}$ $\in \mathbb{R}^2$.
For $K_6$, we define $H_{6} := \langle x\mapsto x+\alpha_{6}, x\mapsto x+\beta_{6}, x\mapsto x+\gamma_{6}\rangle$ and 
$ G_{6} :=\{ \alpha : x\mapsto \varepsilon x + m_{6}\alpha_{6} + n_{6}\beta_{6} +r_{6}\gamma_{6}   \, : \, \varepsilon=\pm 1, m_{6}, n_{6}, r_{6} \in \ZZ\}$. For $K_7$, we define $H_7$ and $G_7$ similarly as done for $K_1$ . By the same arguments as above and in Claim 1, for $i \in I_2$, $\Gamma_i \unlhd G_i$ and the number of $G_i/\Gamma_i$-orbits of vertices of $X_i$ is four. Hence, $m_{i} = 4$, for $i \in I_2$.
In $K_6$ and $K_7$, the four vertex orbits under $G_6$ and $G_7$ are marked by the symbol $\ast$ respectively.
This completes the part {\rm (2)}.

\medskip

Let $I_3= \{3,30\}$. For $i \in I_3$, let $X_{i} = K_{i}/\Gamma_{i}$ be a map on the torus, for some fixed element (vertex, edge or face) free subgroup $\Gamma_{i} \le {\rm Aut}(K_{i})$. $K_i, i \in I_3$, are given in Section \ref{fig:kuniform}. Let the vertices of $X_{i}$ form $m_{i}$ ${\rm Aut}(X_{i})$-orbits. 
In $K_{3}$, we take the middle point of the line segment joining vertices $u_{3}$ and $v_{3}$ as the origin $(0,0)$ and $\alpha_{3} := b_{3} - a_{3}$, $\beta_{3} := c_{3} - a_{3}$ $\in \mathbb{R}^2$.
In $K_{30}$, we take the middle point of the line segment joining vertices $u_{30}$ and $v_{30}$ as the origin $(0,0)$ and $\alpha_{30} := b_{30} - a_{30}$, $\beta_{30} := c_{30} - a_{30}$ $\in \mathbb{R}^2$
Similarly as done for $K_1$, we define $H_i$ and $G_i$, for $i \in I_3$. By the same arguments as above and in Claim 1, for $i \in I_3$, $\Gamma_i \unlhd G_i$ and the number of $G_i/\Gamma_i$-orbits of vertices of $X_i$ is five. Hence, $m_{i} \le 5$, for $i \in I_3$.
In $K_i$, the five vertex orbits under $G_i$ are marked by the symbol \textbf{$\ast$} respectively for $i \in I_3$.
This completes the part {\rm (3)}.

\medskip

Let $I_4 = \{5,21,25,29,34,40,43,47\}$. For $i \in I_4$, let $X_{i} = K_{i}/\Gamma_{i}$ be a map on the torus, for some fixed element (vertex, edge or face) free subgroup $\Gamma_{i} \le {\rm Aut}(K_{i})$. $K_i, i \in I_4$, are given in Section \ref{fig:kuniform}. Let the vertices of $X_{i}$ form $m_{i}$ ${\rm Aut}(X_{i})$-orbits.
In $K_{5}$, we take the middle point of the line segment joining vertices $u_{5}$ and $v_{5}$ as the origin $(0,0)$ and $\alpha_{5} := b_{5} - a_{5}$, $\beta_{5} := c_{5} - a_{5}$ $\in \mathbb{R}^2$.
In $K_{21}$, we take the middle point of the line segment joining vertices $u_{21}$ and $v_{21}$ as the origin $(0,0)$ and $\alpha_{21} := b_{21} - a_{21}$, $\beta_{21} := c_{21} - a_{21}$ $\in \mathbb{R}^2$.
In $K_{25}$, we take the middle point of the line segment joining vertices $u_{25}$ and $v_{25}$ as the origin $(0,0)$ and $\alpha_{25} := b_{25} - a_{25}$, $\beta_{25} := c_{25} - a_{25}$ $\in \mathbb{R}^2$.
In $K_{29}$, we take the middle point of the line segment joining vertices $u_{29}$ and $v_{29}$ as the origin $(0,0)$ and $\alpha_{29} := b_{29} - a_{29}$, $\beta_{29} := c_{29} - a_{29}$ $\in \mathbb{R}^2$.
In $K_{34}$, we take the vertex $a_{34}$ as the origin $(0,0)$ and $\alpha_{34} := b_{34} - a_{34}$, $\beta_{34} := c_{34} - a_{34}$ $\in \mathbb{R}^2$.
In $K_{40}$, we take the middle point of the line segment joining vertices $u_{40}$ and $v_{40}$ as the origin $(0,0)$ and $\alpha_{40} := b_{40} - a_{40}$, $\beta_{40} := c_{40} - a_{40}$ $\in \mathbb{R}^2$.
In $K_{43}$, we take the middle point of the line segment joining vertices $u_{43}$ and $v_{40}$ as the origin $(0,0)$ and $\alpha_{43} := b_{43} - a_{43}$, $\beta_{43} := c_{43} - a_{43}$ $\in \mathbb{R}^2$.
In $K_{47}$, we take the middle point of the line segment joining vertices $u_{47}$ and $v_{47}$ as the origin $(0,0)$ and $\alpha_{47} := b_{47} - a_{47}$, $\beta_{47} := c_{47} - a_{47}$ $\in \mathbb{R}^2$.
For all $i \in I_4$, we define $H_i$ and $G_i$ similarly as done for $K_1$. By the same arguments as above and in Claim 1, for $i \in I_4$, $\Gamma_i \unlhd G_i$ and the number of $G_i/\Gamma_i$-orbits of vertices of $X_i$ is seven. Hence, $m_{i} \le 7$, for $i \in I_4$.
In $K_i$, the seven vertex orbits under $G_i$ are marked by the symbol \textbf{$\ast$} respectively for $i \in I_4$.
This completes the part {\rm (4)}.

\medskip

Let $I_5 = \{22,35,39,41,42,44\}$. For $i \in I_5$, let $X_{i} = K_{i}/\Gamma_{i}$ be a  map on the torus, for some fixed element (vertex, edge or face) free subgroup $\Gamma_{i} \le {\rm Aut}(K_{i})$. $K_i, i \in I_5$, are given in Section \ref{fig:kuniform}. Let the vertices of $X_{i}$ form $m_{i}$ ${\rm Aut}(X_{i})$-orbits.
In $K_{22}$, we take the middle point of the line segment joining vertices $u_{22}$ and $v_{22}$ as the origin $(0,0)$ and $\alpha_{22} := b_{22} - a_{22}$, $\beta_{22} := c_{22} - a_{22}$ $\in \mathbb{R}^2$.
In $K_{35}$, we take the vertex $a_{35}$ as the origin $(0,0)$ and $\alpha_{35} := b_{35} - a_{35}$, $\beta_{35} := c_{35} - a_{35}$ $\in \mathbb{R}^2$.
In $K_{39}$, we take the middle point of the line segment joining vertices $u_{39}$ and $v_{39}$ as the origin $(0,0)$ and $\alpha_{39} := b_{39} - a_{39}$, $\beta_{39} := c_{39} - a_{39}$ $\in \mathbb{R}^2$.
In $K_{41}$, we take the middle point of the line segment joining vertices $u_{41}$ and $v_{41}$ as the origin $(0,0)$ and $\alpha_{41} := b_{41} - a_{41}$, $\beta_{41} := c_{41} - a_{41}$ $\in \mathbb{R}^2$.
In $K_{42}$, we take the middle point of the line segment joining vertices $u_{42}$ and $v_{42}$ as the origin $(0,0)$ and $\alpha_{42} := b_{42} - a_{42}$, $\beta_{42} := c_{42} - a_{42}$ $\in \mathbb{R}^2$.
In $K_{44}$, we take the vertex $a_{44}$ as the origin $(0,0)$ and $\alpha_{44} := b_{44} - a_{44}$, $\beta_{44} := c_{44} - a_{44}$ $\in \mathbb{R}^2$.
For all $i \in I_5$, we define $H_i$ and $G_i$ similarly as done for $K_1$. By the same arguments as above and in Claim 1, for $i \in I_5$, $\Gamma_i \unlhd G_i$ and the number of $G_i/\Gamma_i$-orbits of vertices of $X_i$ is eight. Hence, $m_{i} \le 8$, for $i \in I_5$.
In $K_i$, the eight vertex orbits under $G_i$ are marked by the symbol \textbf{$\ast$} respectively for $i \in I_5$.
This completes the part {\rm (5)}.

\medskip

Let $I_6 = \{18,52,53,56\}$. For $i \in I_6$, let $X_{i} = K_{i}/\Gamma_{i}$ be a  map on the torus, for some fixed element (vertex, edge or face) free subgroup $\Gamma_{i} \le {\rm Aut}(K_{i})$. $K_i, i \in I_6$, are given in Section \ref{fig:kuniform}.
Let the vertices of $X_{i}$ form $m_{i}$ ${\rm Aut}(X_{i})$-orbits.
In $K_{18}$, we take the middle point of the line segment joining vertices $u_{18}$ and $v_{18}$ as the origin $(0,0)$ and $\alpha_{18} := b_{18} - a_{18}$, $\beta_{18} := c_{18} - a_{18}$ $\in \mathbb{R}^2$.
In $K_{52}$, we take the vertex $a_{52}$ as the origin $(0,0)$ and $\alpha_{52} := b_{52} - a_{52}$, $\beta_{52} := c_{52} - a_{52}$, $\in \mathbb{R}^2$.
In $K_{53}$, we take the middle point of the line segment joining vertices $u_{53}$ and $v_{53}$ as the origin $(0,0)$ and $\alpha_{53} := b_{53} - a_{53}$, $\beta_{53} := c_{53} - a_{53}$ $\in \mathbb{R}^2$.
In $K_{56}$, we take the vertex $a_{56}$ as the origin $(0,0)$ and $\alpha_{56} := b_{56} - a_{56}$, $\beta_{56} := c_{56} - a_{56}$ $\in \mathbb{R}^2$.
For all $i \in I_6$, we define $H_i$ and $G_i$ similarly as done for $K_1$. By the same arguments as above and in Claim 1, for $i \in I_6$, $\Gamma_i \unlhd G_i$ and the number of $G_i/\Gamma_i$-orbits of vertices of $X_i$ is nine. Hence, $m_{i} \le 9$, for $i \in I_6$.
In $K_i$, the nine vertex orbits under $G_i$ are marked by the symbol \textbf{$\ast$} respectively for $i \in I_6$.
This completes the part {\rm (6)}.

\medskip

Let $I_7 = \{13,54\}$. For $i \in I_7$, let $X_{i} = K_{i}/\Gamma_{i}$ be a  map on the torus, for some fixed element (vertex, edge or face) free subgroup $\Gamma_{i} \le {\rm Aut}(K_{i})$. $K_i, i \in I_7$, are given in Section \ref{fig:kuniform}. Let the vertices of $X_{i}$ form $m_{i}$ ${\rm Aut}(X_{i})$-orbits.
In $K_{13}$, we take the vertex $a_{13}$ as the origin $(0,0)$ and $\alpha_{} := b_{13} - a_{13}$, $\beta_{13} := c_{13} - a_{13}$, $\gamma_{13} := d_{13} - a_{13}$ $\in \mathbb{R}^2$.
In $K_{54}$, we take the middle point of the line segment joining vertices $u_{54}$ and $v_{54}$ as the origin $(0,0)$ and $\alpha_{54} := b_{54} - a_{54}$, $\beta_{54} := c_{54} - a_{54}$ $\in \mathbb{R}^2$.
For $K_{13}$, we define $H_{13}$ and $G_{13}$ in a similar way that we did for $K_6$. For $K_{54}$, we define $H_{54}$ and $G_{54}$ in a similar way that we did for $K_1$. By the same arguments as above and in Claim 1, for $i \in I_7$, $\Gamma_i \unlhd G_i$ and the number of $G_i/\Gamma_i$-orbits of vertices of $X_i$ is ten. Hence, $m_{i} \le 10$, for $i \in I_7$. In $K_i$, the ten vertex orbits under $G_i$ are marked by the symbol \textbf{$\ast$} respectively for $i \in I_7$.
This completes the part {\rm (7)}.

\medskip

Let $I_8 = \{37,60\}$. For $i \in I_8$, let $X_{i} = K_{i}/\Gamma_{i}$ be a  map on the torus, for some fixed element (vertex, edge or face) free subgroup $\Gamma_{i} \le {\rm Aut}(K_{i})$. $K_i, i \in I_8$, are given in Section \ref{fig:kuniform}. Let the vertices of $X_{i}$ form $m_{i}$ ${\rm Aut}(X_{i})$-orbits.
In $K_{37}$, we take the vertex $a_{37}$ as the origin $(0,0)$ and $\alpha_{37} := b_{37} - a_{37}$, $\beta_{37} := c_{37} - a_{37}$, $\in \mathbb{R}^2$.
In $K_{60}$, we take the middle point of the line segment joining vertices $u_{60}$ and $v_{60}$ as the origin $(0,0)$ and $\alpha_{60} := b_{60} - a_{60}$, $\beta_{60} := c_{60} - a_{60}$ $\in \mathbb{R}^2$.
For all $i \in I_8$, we define $H_i$ and $G_i$ similarly as done for $K_1$. By the same arguments as above and in Claim 1, for $i \in I_8$, $\Gamma_i \unlhd G_i$ and the number of $G_i/\Gamma_i$-orbits of vertices of $X_i$ is eleven. Hence, $m_{i} \le 11$, for $i \in I_8$.
In $K_i$, the eleven vertex orbits under $G_i$ are marked by the symbol \textbf{$\ast$} respectively for $i \in I_8$.
This completes the part {\rm (8)}.

\medskip

Let $I_9 = \{19,65\}$. For $i \in I_9$, let $X_{i} = K_{i}/\Gamma_{i}$ be a  map on the torus, for some fixed element (vertex, edge or face) free subgroup $\Gamma_{i} \le {\rm Aut}(K_{i})$. $K_i, i \in I_9$, are given in Section \ref{fig:kuniform}. Let the vertices of $X_{i}$ form $m_{i}$ ${\rm Aut}(X_{i})$-orbits.
In $K_{19}$, we take the middle point of the line segment joining vertices $u_{19}$ and $v_{19}$ as the origin $(0,0)$ and $\alpha_{19} := b_{19} - a_{19}$, $\beta_{19} := c_{19} - a_{19}$, $\in \mathbb{R}^2$.
In $K_{65}$, we take the middle point of the line segment joining vertices $u_{65}$ and $v_{65}$ as the origin $(0,0)$ and $\alpha_{65} := b_{65} - a_{65}$, $\beta_{65} := c_{65} - a_{65}$ $\in \mathbb{R}^2$.
For all $i \in I_9$, we define $H_i$ and $G_i$ similarly as done for $K_1$. By the same arguments as above and in Claim 1, for $i \in I_9$, $\Gamma_i \unlhd G_i$ and the number of $G_i/\Gamma_i$-orbits of vertices of $X_i$ is twelve. Hence, $m_{i} \le 12$, for $i \in I_9$.
In $K_i$, the twelve vertex orbits under $G_i$ are marked by the symbol \textbf{$\ast$} respectively for $i \in I_9$.
This completes the part {\rm (9)}.

\medskip

Let $I_{10} = \{15,16,17,48\}$. For $i \in I_{10}$, let $X_{i} = K_{i}/\Gamma_{i}$ be a  map on the torus, for some fixed element (vertex, edge or face) free subgroup $\Gamma_{i} \le {\rm Aut}(K_{i})$. $K_i, i \in I_{10}$, are given in Section \ref{fig:kuniform}. Let the vertices of $X_{i}$ form $m_{i}$ ${\rm Aut}(X_{i})$-orbits.
In $K_{15}$, we take the vertex $a_{15}$ as the origin $(0,0)$ and $\alpha_{15} := b_{15} - a_{15}$, $\beta_{15} := c_{15} - a_{15}$, $\gamma_{15} := d_{15} - a_{15}$ $\in \mathbb{R}^2$.
In $K_{16}$, we take the vertex $a_{16}$ as the origin $(0,0)$ and $\alpha_{16} := b_{16} - a_{16}$, $\beta_{16} := c_{16} - a_{16}$, $\gamma_{16} := d_{16} - a_{16}$ $\in \mathbb{R}^2$.
In $K_{17}$, we take the vertex $a_{17}$ as the origin $(0,0)$ and $\alpha_{17} := b_{17} - a_{17}$, $\beta_{17} := c_{17} - a_{17}$, $\gamma_{17} := d_{17} - a_{17}$ $\in \mathbb{R}^2$.
In $K_{48}$, we take the vertex $a_{48}$ as the origin $(0,0)$ and $\alpha_{48} := b_{48} - a_{48}$, $\beta_{48} := c_{48} - a_{48}$ $\in \mathbb{R}^2$.
For all $i \in I_{10}$, except for $i=48$, we define $H_i$ and $G_i$ similarly as done for $K_6$. For $K_{48}$, we define $H_{48}$ and $G_{48}$ similarly as done for $K_1$. By the same arguments as above and in Claim 1, for $i \in I_{10}$, $\Gamma_i \unlhd G_i$ and the number of $G_i/\Gamma_i$-orbits of vertices of $X_i$ is thirteen. Hence, $m_{i} \le 13$, for $i \in I_{10}$.
In $K_i$, the thirteen vertex orbits under $G_i$ are marked by the symbol \textbf{$\ast$} respectively for $i \in I_{10}$.
This completes the part {\rm (10)}.

\medskip

Let $I_{11} = \{11,12,28\}$. For $i \in I_{11}$, let $X_{i} = K_{i}/\Gamma_{i}$ be a  map on the torus, for some fixed element (vertex, edge or face) free subgroup $\Gamma_{i} \le {\rm Aut}(K_{i})$. $K_i, i \in I_{11}$, are given in Section \ref{fig:kuniform}. Let the vertices of $X_{i}$ form $m_{i}$ ${\rm Aut}(X_{i})$-orbits.
In $K_{11}$, we take the middle point of the line segment joining vertices $u_{11}$ and $v_{11}$ as the origin $(0,0)$ and $\alpha_{11} := b_{11} - a_{11}$, $\beta_{11} := c_{11} - a_{11}$ $\in \mathbb{R}^2$.
In $K_{12}$, we take the middle point of the line segment joining vertices $u_{12}$ and $v_{12}$ as the origin $(0,0)$ and $\alpha_{12} := b_{12} - a_{12}$, $\beta_{12} := c_{12} - a_{12}$, $\gamma_{12} := d_{12} - a_{12}$ $\in \mathbb{R}^2$.
In $K_{28}$, we take the middle point of the line segment joining vertices $u_{28}$ and $v_{28}$ as the origin $(0,0)$ and $\alpha_{28} := b_{28} - a_{28}$, $\beta_{28} := c_{28} - a_{28}$ $\in \mathbb{R}^2$.
For all $i \in I_{11}$, except for $i=12$, we define $H_i$ and $G_i$ similarly as done for $K_1$. For $K_{12}$, we define $H_{12}$ and $G_{12}$ similarly as done for $K_6$. By the same arguments as above and in Claim 1, for $i \in I_{11}$, $\Gamma_i \unlhd G_i$ and the number of $G_i/\Gamma_i$-orbits of vertices of $X_i$ is fifteen. Hence, $m_{i} \le 15$, for $i \in I_{11}$.
In $K_i$, the fifteen vertex orbits under $G_i$ are marked by the symbol \textbf{$\ast$} respectively for $i \in I_{11}$.
This completes the part {\rm (11)}.

\medskip

Let $I_{12} = \{10,38\}$. For $i \in I_{12}$, let $X_{i} = K_{i}/\Gamma_{i}$ be a  map on the torus, for some fixed element (vertex, edge or face) free subgroup $\Gamma_{i} \le {\rm Aut}(K_{i})$. $K_i, i \in I_{12}$, are given in Section \ref{fig:kuniform}. Let the vertices of $X_{i}$ form $m_{i}$ ${\rm Aut}(X_{i})$-orbits.
In $K_{10}$, we take the middle point of the line segment joining vertices $u_{10}$ and $v_{10}$ as the origin $(0,0)$ and $\alpha_{10} := b_{10} - a_{10}$, $\beta_{10} := c_{10} - a_{10}$ $\in \mathbb{R}^2$.
In $K_{38}$, we take the vertex $a_{38}$ as the origin $(0,0)$ and $\alpha_{38} := b_{38} - a_{38}$, $\beta_{38} := c_{38} - a_{38}$ $\in \mathbb{R}^2$.
For all $i \in I_{12}$, we define $H_i$ and $G_i$ similarly as done for $K_1$. By the same arguments as above and in Claim 1, for $i \in I_{12}$, $\Gamma_i \unlhd G_i$ and the number of $G_i/\Gamma_i$-orbits of vertices of $X_i$ is sixteen. Hence, $m_{i} \le 16$, for $i \in I_{12}$.
In $K_i$, the sixteen vertex orbits under $G_i$ are marked by the symbol \textbf{$\ast$} respectively for $i \in I_{12}$.
This completes the part {\rm (12)}.

\medskip

Let $I_{13} = \{27,46\}$. For $i \in I_{13}$, let $X_{i} = K_{i}/\Gamma_{i}$ be a  map on the torus, for some fixed element (vertex, edge or face) free subgroup $\Gamma_{i} \le {\rm Aut}(K_{i})$. $K_i, i \in I_{13}$, are given in Section \ref{fig:kuniform}. Let the vertices of $X_{i}$ form $m_{i}$ ${\rm Aut}(X_{i})$-orbits.
In $K_{27}$, we take the middle point of the line segment joining vertices $u_{27}$ and $v_{27}$ as the origin $(0,0)$ and $\alpha_{27} := b_{27} - a_{27}$, $\beta_{27} := c_{27} - a_{27}$ $\in \mathbb{R}^2$.
In $K_{46}$, we take the vertex $a_{46}$ as the origin $(0,0)$ and $\alpha_{46} := b_{46} - a_{46}$, $\beta_{46} := c_{46} - a_{46}$, $\gamma_{46} := d_{46} - a_{46}$ $\in \mathbb{R}^2$.
For $K_{27}$, we define $H_{27}$ and $G_{27}$ similarly as done for $K_1$. For $K_{46}$, we define $H_{46}$ and $G_{46}$ similarly as done for $K_6$. By the same arguments as above and in Claim 1, for $i \in I_{13}$, $\Gamma_i \unlhd G_i$ and the number of $G_i/\Gamma_i$-orbits of vertices of $X_i$ is eighteen. Hence, $m_{i} \le 18$, for $i \in I_{13}$.
In $K_i$, the eighteen vertex orbits under $G_i$ are marked by the symbol \textbf{$\ast$} respectively for $i \in I_{13}$.
This completes the part {\rm (13)}.

\medskip
 
Let $X_{45} = K_{45}/\Gamma_{45}$ be a  map on the torus, for some fixed element (vertex, edge or face) free subgroup $\Gamma_{45} \le {\rm Aut}(K_{45})$. $K_{45}$ is given in Section \ref{fig:kuniform}. Let the vertices of $X_{45}$ form $m_{45}$ ${\rm Aut}(X_{45})$-orbits.
In $K_{45}$, we take the vertex $a_{45}$ as the origin $(0,0)$ and $\alpha_{45} := b_{45} - a_{45}$, $\beta_{45} := c_{45} - a_{45}$, $\gamma_{45} := d_{45} - a_{45}$ $\in \mathbb{R}^2$.
We define $H_{45}$ and $G_{45}$ similarly as done for $K_6$. By the same arguments as above and in Claim 1, $\Gamma_{45} \unlhd G_{45}$ and the number of $G_{45}/\Gamma_{45}$-orbits of vertices of $X_{45}$ is ninteen. Hence, $m_{45} \le 19$.
In $K_{45}$, the ninteen vertex orbits under $G_{45}$ are marked by the symbol \textbf{$\ast$}.
This completes the part {\rm (14)}.

\medskip

Let $I_{14} = \{23,57\}$. For $i \in I_{14}$, let $X_{i} = K_{i}/\Gamma_{i}$ be a  map on the torus, for some fixed element (vertex, edge or face) free subgroup $\Gamma_{i} \le {\rm Aut}(K_{i})$. $K_i, i \in I_{14}$, are given in Section \ref{fig:kuniform}. Let the vertices of $X_{i}$ form $m_{i}$ ${\rm Aut}(X_{i})$-orbits.
In $K_{23}$, we take the middle point of the line segment joining vertices $u_{23}$ and $v_{23}$ as the origin $(0,0)$ and $\alpha_{23} := b_{23} - a_{23}$, $\beta_{23} := c_{23} - a_{23}$ $\in \mathbb{R}^2$.
In $K_{57}$, we take the middle point of the line segment joining vertices $u_{57}$ and $v_{57}$ as the origin $(0,0)$ and $\alpha_{57} := b_{57} - a_{57}$, $\beta_{57} := c_{57} - a_{57}$, $\in \mathbb{R}^2$.
For all $i \in I_{14}$, we define $H_i$ and $G_i$ similarly as done for $K_1$. By the same arguments as above and in Claim 1, for $i \in I_{14}$, $\Gamma_i \unlhd G_i$ and the number of $G_i/\Gamma_i$-orbits of vertices of $X_i$ is twenty-one. Hence, $m_{i} \le 21$, for $i \in I_{14}$.
In $K_i$, the twenty-one vertex orbits under $G_i$ are marked by the symbol \textbf{$\ast$} respectively for $i \in I_{14}$.
This completes the part {\rm (15)}.

\medskip

Let $I_{15} = \{36,63\}$. For $i \in I_{15}$, let $X_{i} = K_{i}/\Gamma_{i}$ be a  map on the torus, for some fixed element (vertex, edge or face) free subgroup $\Gamma_{i} \le {\rm Aut}(K_{i})$. $K_i, i \in I_{15}$, are given in Section \ref{fig:kuniform}. Let the vertices of $X_{i}$ form $m_{i}$ ${\rm Aut}(X_{i})$-orbits.
In $K_{36}$, we take the middle point of the line segment joining vertices $u_{36}$ and $v_{36}$ as the origin $(0,0)$ and $\alpha_{36} := b_{36} - a_{36}$, $\beta_{36} := c_{36} - a_{36}$ $\in \mathbb{R}^2$.
In $K_{63}$, we take the vertex $a_{63}$ as the origin $(0,0)$ and $\alpha_{63} := b_{63} - a_{63}$, $\beta_{63} := c_{63} - a_{63}$, $\in \mathbb{R}^2$.
For all $i \in I_{15}$, we define $H_i$ and $G_i$ similarly as done for $K_1$. By the same arguments as above and in Claim 1, for $i \in I_{15}$, $\Gamma_i \unlhd G_i$ and the number of $G_i/\Gamma_i$-orbits of vertices of $X_i$ is twenty-two. Hence, $m_{i} \le 22$, for $i \in I_{15}$.
In $K_i$, the twenty-two vertex orbits under $G_i$ are marked by the symbol \textbf{$\ast$} respectively for $i \in I_{15}$.
This completes the part {\rm (16)}.

\medskip
 
Let $X_{49} = K_{49}/\Gamma_{49}$ be a  map on the torus, for some fixed element (vertex, edge or face) free subgroup $\Gamma_{49} \le {\rm Aut}(K_{49})$. $K_{49}$ is given in Section \ref{fig:kuniform}. Let the vertices of $X_{49}$ form $m_{49}$ ${\rm Aut}(X_{49})$-orbits.
In $K_{49}$, we take the middle point of the line segment joining vertices $u_{49}$ and $v_{49}$ as the origin $(0,0)$ and $\alpha_{49} := b_{49} - a_{49}$, $\beta_{49} := c_{49} - a_{49}$ $\in \mathbb{R}^2$.
We define $H_{49}$ and $G_{49}$ similarly as done for $K_1$. By the same arguments as above and in Claim 1, $\Gamma_{49} \unlhd G_{49}$ and the number of $G_{49}/\Gamma_{49}$-orbits of vertices of $X_{49}$ is twenty-four. Hence, $m_{49} \le 24$.
In $K_{49}$, the twenty-four vertex orbits under $G_{49}$ are marked by the symbol \textbf{$\ast$}.
This completes the part {\rm (17)}.

\medskip
 
Let $X_{51} = K_{51}/\Gamma_{51}$ be a  map on the torus, for some fixed element (vertex, edge or face) free subgroup $\Gamma_{51} \le {\rm Aut}(K_{51})$. $K_{51}$ is given in Section \ref{fig:kuniform}. Let the vertices of $X_{51}$ form $m_{51}$ ${\rm Aut}(X_{51})$-orbits.
In $K_{51}$, we take the vertex $a_{51}$ as the origin $(0,0)$ and $\alpha_{51} := b_{51} - a_{51}$, $\beta_{51} := c_{51} - a_{51}$ $\in \mathbb{R}^2$.
We define $H_{51}$ and $G_{51}$ similarly as done for $K_1$. By the same arguments as above and in Claim 1, $\Gamma_{51} \unlhd G_{51}$ and the number of $G_{51}/\Gamma_{51}$-orbits of vertices of $X_{51}$ is twenty-five. Hence, $m_{51} \le 25$.
In $K_{51}$, the twenty-five vertex orbits under $G_{51}$ are marked by the symbol \textbf{$\ast$}.
This completes the part {\rm (18)}.

\medskip
 
Let $X_{58} = K_{58}/\Gamma_{58}$ be a  map on the torus, for some fixed element (vertex, edge or face) free subgroup $\Gamma_{58} \le {\rm Aut}(K_{58})$. $K_{58}$ is given in Section \ref{fig:kuniform}. Let the vertices of $X_{58}$ form $m_{58}$ ${\rm Aut}(X_{58})$-orbits.
In $K_{58}$, we take the middle point of the line segment joining vertices $u_{58}$ and $v_{58}$ as the origin $(0,0)$ and $\alpha_{58} := b_{58} - a_{58}$, $\beta_{58} := c_{58} - a_{58}$, $\gamma_{58} := d_{58} - a_{58}$ $\in \mathbb{R}^2$.
We define $H_{58}$ and $G_{58}$ similarly as done for $K_6$. By the same arguments as above and in Claim 1, $\Gamma_{58} \unlhd G_{58}$ and the number of $G_{58}/\Gamma_{58}$-orbits of vertices of $X_{58}$ is twenty-six. Hence, $m_{58} \le 26$.
In $K_{58}$, the twenty-six vertex orbits under $G_{58}$ are marked by the symbol \textbf{$\ast$}.
This completes the part {\rm (19)}.

\medskip
 
Let $X_{55} = K_{55}/\Gamma_{55}$ be a  map on the torus, for some fixed element (vertex, edge or face) free subgroup $\Gamma_{55} \le {\rm Aut}(K_{55})$. $K_{55}$ is given in Section \ref{fig:kuniform}. Let the vertices of $X_{55}$ form $m_{55}$ ${\rm Aut}(X_{55})$-orbits.
In $K_{55}$, we take the middle point of the line segment joining vertices $u_{55}$ and $v_{55}$ as the origin $(0,0)$ and $\alpha_{55} := b_{55} - a_{55}$, $\beta_{55} := c_{55} - a_{55}$, $\gamma_{55} := d_{55} - a_{55}$ $\in \mathbb{R}^2$.
We define $H_{55}$ and $G_{55}$ similarly as done for $K_6$. By the same arguments as above and in Claim 1, $\Gamma_{55} \unlhd G_{55}$ and the number of $G_{55}/\Gamma_{55}$-orbits of vertices of $X_{55}$ is twenty-seven. Hence, $m_{55} \le 27$.
In $K_{55}$, the twenty-seven vertex orbits under $G_{55}$ are marked by the symbol \textbf{$\ast$}.
This completes the part {\rm (20)}.

\medskip
 
Let $X_{64} = K_{64}/\Gamma_{64}$ be a  map on the torus, for some fixed element (vertex, edge or face) free subgroup $\Gamma_{64} \le {\rm Aut}(K_{64})$. $K_{64}$ is given in Section \ref{fig:kuniform}. Let the vertices of $X_{64}$ form $m_{64}$ ${\rm Aut}(X_{64})$-orbits.
In $K_{64}$, we take the middle point of the line segment joining vertices $u_{64}$ and $v_{64}$ as the origin $(0,0)$ and $\alpha_{64} := b_{64} - a_{64}$, $\beta_{64} := c_{64} - a_{64}$ $\in \mathbb{R}^2$.
We define $H_{64}$ and $G_{64}$ similarly as done for $K_1$. By the same arguments as above and in Claim 1, $\Gamma_{64} \unlhd G_{64}$ and the number of $G_{64}/\Gamma_{64}$-orbits of vertices of $X_{64}$ is twenty-eight. Hence, $m_{64} \le 28$.
In $K_{64}$, the twenty-eight vertex orbits under $G_{64}$ are marked by the symbol \textbf{$\ast$}.
This completes the part {\rm (21)}.

\medskip
 
Let $X_{50} = K_{50}/\Gamma_{50}$ be a  map on the torus, for some fixed element (vertex, edge or face) free subgroup $\Gamma_{50} \le {\rm Aut}(K_{50})$. $K_{50}$ is given in Section \ref{fig:kuniform}. Let the vertices of $X_{50}$ form $m_{50}$ ${\rm Aut}(X_{50})$-orbits.
In $K_{50}$, we take the middle point of the line segment joining vertices $u_{50}$ and $v_{50}$ as the origin $(0,0)$ and $\alpha_{50} := b_{50} - a_{50}$, $\beta_{50} := c_{50} - a_{50}$, $\gamma_{50} := d_{50} - a_{50}$ $\in \mathbb{R}^2$.
We define $H_{50}$ and $G_{50}$ similarly as done for $K_6$. By the same arguments as above and in Claim 1, $\Gamma_{50} \unlhd G_{50}$ and the number of $G_{50}/\Gamma_{50}$-orbits of vertices of $X_{50}$ is thirty. Hence, $m_{50} \le 30$.
In $K_{50}$, the thirty vertex orbits under $G_{50}$ are marked by the symbol \textbf{$\ast$}.
This completes the part {\rm (22)}.

\medskip

Let $I_{16} = \{59,61,62\}$. For $i \in I_{16}$, let $X_{i} = K_{i}/\Gamma_{i}$ be a  map on the torus, for some fixed element (vertex, edge or face) free subgroup $\Gamma_{i} \le {\rm Aut}(K_{i})$. $K_i, i \in I_{16}$, are given in Section \ref{fig:kuniform}. Let the vertices of $X_{i}$ form $m_{i}$ ${\rm Aut}(X_{i})$-orbits.
In $K_{59}$, we take the middle point of the line segment joining vertices $u_{59}$ and $v_{59}$ as the origin $(0,0)$ and $\alpha_{59} := b_{59} - a_{59}$, $\beta_{59} := c_{59} - a_{59}$, $\gamma_{59} := d_{59} - a_{59}$ $\in \mathbb{R}^2$.
In $K_{61}$, we take the middle point of the line segment joining vertices $u_{61}$ and $v_{61}$ as the origin $(0,0)$ and $\alpha_{61} := b_{61} - a_{61}$, $\beta_{61} := c_{61} - a_{61}$ $\in \mathbb{R}^2$.
In $K_{62}$, we take the middle point of the line segment joining vertices $u_{62}$ and $v_{62}$ as the origin $(0,0)$ and $\alpha_{62} := b_{62} - a_{62}$, $\beta_{62} := c_{62} - a_{62}$ $\in \mathbb{R}^2$.
For all $i \in I_{16}$, except for $i=59$, we define $H_i$ and $G_i$ similarly as done for $K_1$. For $K_{59}$, we define $H_{59}$ and $G_{59}$ similarly as done for $K_6$. By the same arguments as above and in Claim 1, for $i \in I_{16}$, $\Gamma_i \unlhd G_i$ and the number of $G_i/\Gamma_i$-orbits of vertices of $X_i$ is thirty-three. Hence, $m_{i} \le 33$, for $i \in I_{16}$.
In $K_i$, the thirty-three vertex orbits under $G_i$ are marked by the symbol \textbf{$\ast$} respectively for $i \in I_{16}$.
This completes the part {\rm (23)}.
\\
Now we show the existence of a toroidal map $K_1/\Gamma_1$ with $m_1=6$. Let $X_1=K_1/\Gamma_1$ for some discrete fixed point free subgroup $\Gamma_1$ of Aut($K_1$). Aut($X_1$)$=$ Nor($\Gamma_1$)/$\Gamma_1$. Now $V(X_1)$ has $6$ $G_1/\Gamma_1$-orbits ($G_1$ defined in part (1)). If we can show that there exists some $\Gamma_1 \le H_1$ such that Nor($\Gamma_1$)$=G_1$ then we are done. Let $\alpha_1, \beta_1$ be the translations defined as in part (1).
Consider $\Gamma_1=\langle A_1^7,B_1^5 \rangle$ where $A_1^7$ and $B_1^5$ are translations by the vectors $7\alpha_1$ and $5\beta_1$ respectively. 
Nor$(\Gamma_1)=\{\gamma \in $ Aut$(K_1) \mid \gamma A_1^7\gamma^{-1},\gamma B_1^5\gamma^{-1}\in \Gamma_1 \} = \{\gamma \in$ Aut$(K_1) \mid \gamma(7\alpha_1), \gamma(5\beta_1) \in \ZZ7\alpha_1+\ZZ5\beta_1 \}$. Clearly $G_1 \le $ Nor$(\Gamma_1)$. Clearly, rotations and reflection about a line does not belong to Nor$(\Gamma_1)$. Hence Nor$(\Gamma_1) = G_1$. With this same method one can see that other bounds in part(1) - (23) are also sharp. This proves part {(24)}.\\
This completes the proof.
\end{proof}

\section{Acknowledgements}

Marbarisha M. Kharkongor is supported by University Grants Commission (UGC) (1182/(CSIR-UGC NET JUNE 2019)) and Dipendu Maity is supported by Science and Engineering Research Board (SERB), DST, India (SRG/2021/000055 dated 03 December, 2021).

\section{Conflicts of Interest Statement} 
On behalf of all authors, the corresponding author states that there is no conflict of interest.


{\small

}


\begin{thebibliography}{99}

\bibitem{chavey89}
D. Chavey, {\em Tilings by regular polygons. II}, { Comput. Math. Appl.} {\bf 17} (1989), 147--165.


\bibitem{CM1957} H. S. M. Coxeter and W. O. J. Moser, {\em Generators and Relations for Discrete Groups} (4th edition), Springer-Verlag, Berlin-New York, 1980.


\bibitem{DM2017}
B. Datta and D. Maity, Semi-equivelar and vertex-transitive maps on the torus,
    {\em Beitr\"{a}ge Algebra Geom.}, {\bf 58} (2017), pp. 617-634.

\bibitem{DM2018}
B. Datta and D. Maity, {\em Semi-equivelar maps on the torus and the Klein bottle are Archimedean}, Discrete Math. {\bf 341} (12) (2018), 329--3309.




\bibitem{Otto1977}
Otto Kr\"{o}tenheerdt, {\em Die homogenen Mosaike n-ter Ordnung in der euklidischen Ebene. I, II, III.} Wiss. Z. Martin-LUther-Univ. Halle-Wittenberg Math.-Natur. Reihe 18 (1969), pp. 273-290, 19 (1970), pp. 19-38 and 97-241. 

\bibitem{FT1965} L. Fejes T\'{o}th, {\em Regul\"{a}re Figuren}, Akad\'{e}miai Kiad\'{o}, Budapest, 1965. (English translation: {\em Regular Figures}, Pergmon Press, Oxford, 1964.
%
\bibitem{GS1977}
B. Gr\"{u}nbaum and G. C. Shephard, {\em Tilings by regular polygons: Patterns in the plane from Kepler to the present, including recent results and unsolved problems}, Math. Mag. {\bf 50} (1977), 227--247.

%
\bibitem{GS1981}
B. Gr\"{u}nbaum and G. C. Shephard, {\em The geometry of planar graphs.}
Combinatorics (Swansea, 1981), pp. 124-–150, London Math. Soc. LNS {\bf 52},
Cambridge Univ. Press, Cambridge, 1981.



\bibitem{DM2020}
M. M. Kharkongor, D. Bhowmik and D. Maity, {\em Quotient maps of 2-uniform tilings of the plane on the torus}, arXiv:2101.04373v1

\bibitem{MS1988}
M. Senechal, {\em Tiling the torus and other space forms}, {Discrete \& Comput Geom.} {\bf 3} (1988), 55--72.
.

%
%
%

\end{thebibliography}
\end{document}